\theoremstyle{plain}
   \newtheorem{theorem}{Theorem}[section]
   \newtheorem{proposition}[theorem]{Proposition}
   \newtheorem{prop}[theorem]{Proposition}
   \newtheorem{lemma}[theorem]{Lemma}
   \newtheorem{corollary}[theorem]{Corollary}
    \newtheorem{cor}[theorem]{Corollary}
\theoremstyle{definition}
   \newtheorem{definition}[theorem]{Definition}
   \newtheorem{example}[theorem]{Example}
   \newtheorem{question}[theorem]{Question}
   \newtheorem{remark}[theorem]{Remark}
\numberwithin{equation}{section}
\newcommand\hook[2]{{\left\langle #2-#1, 1^{#1} \right\rangle}}
\newcommand\Symm{\mathfrak{S}}
\newcommand\xx{{\mathbf{x}}}
\newcommand\yy{{\mathbf{y}}}
\newcommand\pp{{\mathbf{p}}}
\newcommand\rr{{\mathbf{r}}}
\newcommand\ee{{\mathbf{e}}}
\newcommand\codim{\operatorname{codim}}
\newcommand\Irr{\operatorname{Irr}}
\newcommand\Aut{\operatorname{Aut}}
\newcommand\fix{\operatorname{fix}}
\newcommand\normchi{\widetilde{\chi}}
\newcommand\CCC{{\mathcal{C}}}
\newcommand\CC{{\mathbb{C}}}
\newcommand\ZZ{{\mathbb{Z}}}
\newcommand\NN{{\mathbb{N}}}
\newcommand\QQ{{\mathbb{Q}}}
\newcommand\GL{{\mathrm{GL}}}
\newcommand{\cn}{\varsigma}
\newcommand{\cnn}{\varsigma_{(n - 1, 1)}}
\newcommand{\poly}[3]{P_{#1}^{(#2)}(#3)}
\newcommand{\z}{\zeta}
\newcommand{\zz}{\mathfrak{z}}
\newcommand{\lR}{\ell_{R}}
\begin{document}

\title{Factorization problems in complex reflection groups}
\author{Joel Brewster Lewis and Alejandro H. Morales}

\maketitle

\begin{center}
  \emph{Dedicated to David M. Jackson in recognition of his 75th birthday}
\end{center}

\begin{abstract}
We enumerate factorizations of a Coxeter element in a well generated complex reflection group into arbitrary factors, keeping track of the fixed space dimension of each factor.  In the infinite families of generalized permutations, our approach is fully combinatorial.  It gives results analogous to those of Jackson in the symmetric group and can be refined to encode a notion of cycle type.  As one application of our results, we give a previously overlooked characterization of the poset of $W$-noncrossing partitions.
\end{abstract}


\section{Introduction}

The motivation for this paper is the following formula of
Chapuy and Stump for the generating function for the number of factorizations of a fixed
Coxeter element by reflections in a complex reflection
group. 

\begin{theorem}[{Chapuy--Stump \cite[Thm.~1.1]{ChapuyStump}}]
\label{thm:cs}
Let $W$ be an irreducible well generated complex reflection group of
rank $n$. Let $c$ be a Coxeter element in $W$, let $R$ and $R^*$ be
the set of all reflections and all reflecting hyperplanes in $W$, and
for $\ell \geq 1$ let $N_{\ell}(W) := \#\{
(\tau_1,\ldots,\tau_{\ell}) \in R^{\ell} \colon \tau_1\cdots
\tau_{\ell} =  c \bigr\}$ be the number of factorizations of $c$ as a
product of $\ell$ reflections in $R$. Then
\[
\sum_{\ell \geq 0} N_{\ell}(W)\frac{t^{\ell}}{\ell!}  = \frac{1}{|W|} \left( e^{t|R|/n}
  - e^{-t |R^*|/n} \right)^n.
\]
\end{theorem}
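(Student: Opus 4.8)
The natural framework is the character theory of $W$, following the Frobenius approach to factorization counts. Write $r = \sum_{\tau \in R} \tau$ for the sum of all reflections in the group algebra $\CC[W]$. Since $R$ is closed under conjugation, $r$ is central, so it acts on each irreducible module $V_\chi$ (with character $\chi \in \Irr(W)$) as the scalar $\sigma_\chi := \frac{1}{\chi(1)}\sum_{\tau \in R}\chi(\tau)$. The number $N_\ell(W)$ is exactly the coefficient of $c$ in $r^\ell$, and extracting this coefficient through the regular representation gives
\[
N_\ell(W) = \frac{1}{|W|}\sum_{\chi \in \Irr(W)} \chi(1)\,\overline{\chi(c)}\,\sigma_\chi^{\,\ell}.
\]
Summing against $t^\ell/\ell!$ turns the power into an exponential, so the whole problem reduces to proving the identity
\[
\frac{1}{|W|}\sum_{\chi \in \Irr(W)} \chi(1)\,\overline{\chi(c)}\, e^{t\sigma_\chi} = \frac{1}{|W|}\left(e^{t|R|/n} - e^{-t|R^*|/n}\right)^n.
\]

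Next I would identify where the two constants $|R|/n$ and $-|R^*|/n$ come from, by expanding the right-hand side with the binomial theorem: it is $\frac{1}{|W|}\sum_{k=0}^n \binom{n}{k}(-1)^{n-k} e^{t\lambda_k}$ with $\lambda_k = \frac{k|R| - (n-k)|R^*|}{n}$. These exponents are realized by the exterior powers of the reflection representation $V$: a reflection $\tau$ acts on $V$ with eigenvalue $1$ of multiplicity $n-1$ and one nontrivial root of unity $\zeta_\tau$, so on $\wedge^k V$ one computes $\chi_{\wedge^k V}(\tau) = \binom{n-1}{k} + \binom{n-1}{k-1}\zeta_\tau$. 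Summing over $R$ and using $\sum_{\tau}\zeta_\tau = -|R^*|$ (the nontrivial eigenvalues at each hyperplane are the nonidentity roots of unity, summing to $-1$), together with $\dim \wedge^k V = \binom{n}{k}$, gives exactly $\sigma_{\wedge^k V} = \lambda_{n-k}$. Thus the $n+1$ characters $\wedge^0 V, \dots, \wedge^n V$ sit at precisely the exponents appearing on the right.

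The main obstacle is to show that, after grouping the left-hand sum by the value of $\sigma_\chi$, the total collapses to exactly these $n+1$ exponential terms with the stated coefficients. This is genuinely delicate: it is \emph{not} true that only exterior powers of $V$ contribute. Already for dihedral groups and for $B_2$ one checks that $\chi_{\wedge^k V}(c)$ can vanish while other irreducible characters sharing the same value of $\sigma_\chi$ combine to produce the correct coefficient $\binom{n}{k}(-1)^{n-k}$. To handle this uniformly I would invoke Springer's theory of regular elements: a Coxeter element $c$ of a well generated group is $\zeta$-regular for $\zeta = e^{2\pi i/d_n}$, and Springer's theorem evaluates $\overline{\chi(c)}$ as the fake-degree polynomial $f_\chi$ of $\chi$ specialized at a root of unity. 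Feeding these evaluations into the character sum and recognizing the result as a Molien-type graded trace on the coinvariant algebra should produce the product on the right, with the cancellations among non-exterior-power characters forced by orthogonality of the graded multiplicities. I expect this collapse to be the crux of the argument, and in the absence of a slick uniform identity it may ultimately have to be confirmed through the Shephard--Todd classification.
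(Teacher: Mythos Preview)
Your sketch is a correct outline of the original Chapuy--Stump proof: the Frobenius setup, the identification of the exterior powers $\wedge^k V$ as carrying the eigenvalues $\lambda_{n-k}$, and the appeal to Springer's theory of regular elements for the evaluation $\overline{\chi(c)}$ are all exactly right, and you are honest that the collapse to $n+1$ exponentials is the crux (Chapuy--Stump handle it via the classification; Douvropoulos later gave a uniform argument).

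The paper, however, does not reprove Theorem~\ref{thm:cs} by this route. It is quoted as a known result, and the paper's own contribution (Section~\ref{sec:applications}) is to \emph{rederive} it for the infinite families $G(d,1,n)$, $G(d,d,n)$ and the small-rank groups as a corollary of the finer Theorems~\ref{G(d, 1, n) many factors theorem}, \ref{thm:mainG(d,d,n) k factors}, \ref{thm:rank 2}, and Proposition~\ref{rank 3 best}. Concretely, the paper observes that $N_\ell = [x_1^{n-1}\cdots x_\ell^{n-1}]F_G$, reads off this coefficient from the expansion $F_G = |G|^{\ell-1}\sum_{\pp} M^n_{\pp}\,P_{p_1}(x_1)\cdots P_{p_\ell}(x_\ell)$ using only the leading and subleading coefficients $a = -|R^*|/|G|$ and $b = h/|G|$ of the basis polynomials, and then sums the resulting binomial expression to the closed form. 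So your approach is top-down algebraic (characters, regular elements, works uniformly but leans on the classification at the key step), while the paper's is bottom-up combinatorial (factorizations tracked by fixed-space dimension, reduced to the symmetric group by the coloring bijections of Sections~\ref{sec:G(d, 1, n)}--\ref{sec:ddn}); the payoff of the paper's route is a fully elementary proof for $G(d,1,n)$, at the cost of covering only the groups for which the refined formula has been established.
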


Near $t=0$, the generating function recovers the fact \cite[Prop.~7.6]{Bessis} that the number of minimum-length reflection factorizations of a Coxeter element is $n! h^n/|W|$, where $h$ is the Coxeter number of $W$. When $W$ is the symmetric group $\Symm_{n}$, Theorem~\ref{thm:cs} reduces
to a result of Jackson \cite{Jackson} counting factorizations of
the $n$-cycle $(12\cdots n)$ into transpositions.  Chapuy and Stump prove their result
by an algebraic approach with irreducible
characters that dates back to Frobenius.

A natural question is whether there are extensions to complex
reflection groups of other factorization results in the symmetric
group. In the same paper \cite{Jackson}, Jackson gave formulas for the
generating polynomial of factorizations of an $n$-cycle as a
product of a fixed number of factors, keeping track of the number of
cycles of each factor. We state the result for two and $k$ factors, as
reformulated by Schaeffer--Vassilieva.

\begin{theorem}[{Jackson \cite{Jackson}, Schaeffer--Vassilieva
    \cite{SchaefferVassilieva}}]
\label{S_n theorem}
\label{S_n theorem k factors}
Let $\cn$ be a fixed $n$-cycle in $\Symm_n$, and for integers $r_1,\ldots,r_k$
let $a_{r_1,\ldots,r_k}$ be the number of factorizations of $\cn$ as a
product of $k$ permutations in $\Symm_n$ such that $\pi_i$ has $r_i$ cycles for $i=1,\ldots,k$. Then
\begin{equation} \label{eqn S_n 2 factors}
\frac{1}{n!} \cdot \sum_{r, s \geq 1} a_{r, s} x^r y^s = \sum_{p,q
  \geq 1} \binom{n-1}{p-1; q-1; n - p - q + 1} \frac{(x)_{p}}{p!}
  \frac{(y)_{q}}{q!}, 
\end{equation}
where $(x)_p$ denotes the falling factorial $(x)_p := x(x - 1)\cdots (x - p + 1)$.  More generally,
\begin{equation}
\label{eqn S_n k factors}
\frac{1}{(n!)^{k-1}} \cdot \sum_{r_1,\ldots,r_k \geq 1} a_{r_1,
  \ldots, r_k} x_1^{r_1} \cdots x_k^{r_k} = \sum_{p_1,\ldots,p_k \geq
  1}  M^{n-1}_{p_1-1,\ldots,p_k-1} \frac{(x_1)_{p_1}}{p_1!} \cdots
  \frac{(x_k)_{p_k}}{p_k!}, 
\end{equation}
where the coefficient $M^n_{p_1,\ldots,p_k}$ is defined in \eqref{definition M coeff}.
\end{theorem}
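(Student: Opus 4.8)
The plan is to compute the entire generating function directly by Fourier analysis on $\Symm_n$, reducing it to a sum over irreducible characters in which only hook shapes survive. Writing $c(\sigma)$ for the number of cycles of $\sigma$, the left-hand side of \eqref{eqn S_n k factors} (before the normalizing factor) is the $k$-fold convolution $(f_1 * \cdots * f_k)(\cn)$ of the class functions $f_i(\sigma) := x_i^{c(\sigma)}$. Since the irreducible characters satisfy $\chi^\lambda * \chi^\mu = \delta_{\lambda\mu}\,\bigl(n!/\chi^\lambda(1)\bigr)\,\chi^\lambda$ under convolution, expanding each $f_i$ in the character basis gives
\[
\sum_{r_1,\ldots,r_k\ge 1} a_{r_1,\ldots,r_k}\,x_1^{r_1}\cdots x_k^{r_k} \;=\; \frac{1}{n!}\sum_{\lambda\vdash n} \chi^\lambda(1)\,\chi^\lambda(\cn)\,\prod_{i=1}^{k} H_\lambda(x_i), \qquad H_\lambda(x) := \frac{1}{\chi^\lambda(1)}\sum_{\sigma\in\Symm_n} x^{c(\sigma)}\chi^\lambda(\sigma).
\]

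The key input is that $H_\lambda$ is an explicit product over the cells of $\lambda$. Indeed, the class sum $\sum_\sigma x^{c(\sigma)}\sigma$ equals $\prod_{i=1}^n (x+J_i)$, where $J_i = \sum_{a<i}(a\,i)$ are the Jucys--Murphy elements; this element is central and acts on the irreducible module indexed by $\lambda$ by the scalar $\prod_{(a,b)\in\lambda}(x + b - a)$, so taking traces yields $H_\lambda(x) = \prod_{(a,b)\in\lambda}(x + b - a)$. I would then invoke the Murnaghan--Nakayama rule: $\chi^\lambda(\cn) = 0$ unless $\lambda$ is a hook $(n-j,1^j)$, in which case $\chi^\lambda(\cn) = (-1)^j$ and $\chi^\lambda(1) = \binom{n-1}{j}$. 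For a hook the content product telescopes into $n$ consecutive factors,
\[
H_{(n-j,1^j)}(x) \;=\; (x-j)(x-j+1)\cdots(x+n-1-j) \;=\; n!\binom{x+n-1-j}{n},
\]
so the whole sum collapses to a single sum over the hook parameter $j=0,\ldots,n-1$.

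The final step is the change of basis into falling factorials. Applying the Vandermonde convolution $\binom{x+a}{n} = \sum_p \binom{x}{p}\binom{a}{n-p}$ with $a=n-1-j$ gives $H_{(n-j,1^j)}(x) = n!\sum_p \binom{n-1-j}{n-p}\,(x)_p/p!$. Substituting this for each $x_i$, dividing by $(n!)^{k-1}$, and extracting the coefficient of $\prod_i (x_i)_{p_i}/p_i!$ shows that \eqref{eqn S_n k factors} is equivalent to the closed form $M^{n-1}_{p_1-1,\ldots,p_k-1} = \sum_{j\ge 0} (-1)^j \binom{n-1}{j}\prod_{i=1}^{k}\binom{n-1-j}{n-p_i}$, which I would match against the definition \eqref{definition M coeff}. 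In the two-factor case this sum collapses completely: rewriting $\binom{n-1}{j}\binom{n-1-j}{n-p} = \binom{n-1}{n-p}\binom{p-1}{j}$ and using $\sum_j (-1)^j \binom{p-1}{j}\binom{n-1-j}{n-q} = \binom{n-p}{n-p-q+1}$ produces $\binom{n-1}{n-p}\binom{n-p}{n-p-q+1} = \binom{n-1}{p-1;\,q-1;\,n-p-q+1}$, proving \eqref{eqn S_n 2 factors}.

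I expect the main obstacle to be the content-evaluation identity $H_\lambda(x) = \prod_{(a,b)\in\lambda}(x+b-a)$: once it is in hand, the restriction to hooks, the Vandermonde expansion, and the alternating-sum collapse are all routine, but this identity is precisely the point where the cycle-counting statistic must be matched to the representation theory (via Jucys--Murphy elements, or equivalently a direct Murnaghan--Nakayama induction on $\sum_\sigma x^{c(\sigma)}\chi^\lambda(\sigma)$). A secondary point requiring care is reconciling the general-$k$ alternating sum above with the definition of $M^{n}_{p_1,\ldots,p_k}$ in \eqref{definition M coeff}: for $k=2$ this is the multinomial collapse just described, but for larger $k$ the sum does not simplify further, so one must verify that it agrees term-by-term with the stated definition.
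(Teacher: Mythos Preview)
Your proof is correct and follows precisely the algebraic approach the paper attributes to Jackson. The paper does not give its own proof of this theorem---it is stated as background---but the paper does prove the closely related $(n-1)$-cycle result (Theorem~\ref{S_n (n-1) cycle theorem k factors}) by the same strategy: apply the Frobenius lemma (Lemma~\ref{frobenius}), use Murnaghan--Nakayama to restrict to hooks, evaluate the polynomial $g_\lambda(x)=H_\lambda(x)$ explicitly for hooks, and expand in the falling-factorial basis via Chu--Vandermonde. Your alternating sum over $j$ indeed matches $M^{n-1}_{p_1-1,\ldots,p_k-1}$ term-by-term, since $\binom{n-1-j}{p_i-1-j}=\binom{n-1-j}{n-p_i}$; so the ``secondary point requiring care'' you flagged is in fact trivial.

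The one genuine difference is how you justify the content evaluation $H_\lambda(x)=\prod_{(a,b)\in\lambda}(x+b-a)$. You use the Jucys--Murphy identity $\sum_\sigma x^{c(\sigma)}\sigma=\prod_i(x+J_i)$, whereas the paper (Lemma~\ref{lemm:Sncasehooks}) obtains the same formula by recognizing $g_\lambda(x)$ as $H_\lambda\cdot s_\lambda(1^x)$ and applying the hook-content formula. Both are standard; your route is slightly slicker in that it yields the full content product for all $\lambda$ at once, while the paper's route more directly extends to the near-hook shapes needed for the $(n-1)$-cycle.
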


In this paper we give analogues of these results for two infinite families of complex reflection
groups:  the group $G(d,1,n)$ of $n \times n$ monomial matrices whose
nonzero entries are all $d$th roots of unity (i.e., the wreath
product $(\ZZ/d\ZZ)  \wr \mathfrak{S}_n$; at $d = 2$, the Coxeter
group of type $B_n$) and its subgroup $G(d,d,n)$ of matrices whose nonzero
  entries multiply to $1$ (at $d = 2$, the Coxeter group of type $D_n$).  
The analogue of an $n$-cycle in a complex reflection group is a \emph{Coxeter element}. 
The analogue of number of cycles of a group element is the \emph{fixed space dimension}.  Our results for $G(d, 1, n)$ are in terms of the polynomials
\begin{equation}
\label{first basis}
    (x - 1)_k^{(d)} := (x - 1)(x-1-d)(x-1-2d)\cdots (x-1-(k-1)d) = \prod_{i = 1}^k (x - e^*_i).
\end{equation}
Here the roots $e^*_i$ are the \emph{coexponents} of this group, one
of the fundamental sets of invariants associated to every complex
reflection group. (All these terms are defined in Section~\ref{subsec:crg}.) 

\begin{theorem} \label{thm: main G(d,1,n)}
For $d > 1$, let $G = G(d, 1, n)$ and let $a^{(d)}_{r_1,\ldots,r_k}$ be the number of factorizations of a fixed Coxeter element $c$ in $G$ as a product of $k$ elements of $G$ with fixed space dimensions $r_1,\ldots,r_k$,
respectively. Then
\begin{align}
\label{eq main G(d,1,n)}
\frac{1}{|G|} \cdot \sum_{r,s \geq 0} a^{(d)}_{r,s} x^r y^s  &= \sum_{p,q \geq 0}
 \binom{n}{p; q; n - p - q} \frac{(x - 1)^{(d)}_{p}}{d^pp!} \frac{(y - 1)^{(d)}_{q}}{d^qq!},\\ \intertext{and more generally}
\frac{1}{|G|^{k-1}}\cdot \sum_{r_1,\ldots,r_k \geq 0}
a^{(d)}_{r_1,\ldots,r_k} x_1^{r_1} \cdots x_k^{r_k} &= 
 \sum_{p_1,\ldots,p_k \geq 0}
M^n_{p_1,\ldots,p_k} \frac{(x_1 - 1)^{(d)}_{p_1}}{d^{p_1}p_1!} \cdots \frac{(x_k - 1)^{(d)}_{p_k}}{d^{p_k}p_k!},
 \end{align}
where $M^n_{p_1,\ldots,p_k}$ is defined in \eqref{definition M coeff}.
\end{theorem}

In Section~\ref{sec:G(d, 1, n)}, we give a combinatorial proof of the
result for $k$ factors by reducing it to
  the case of the symmetric group for $k$ factors. The latter case has a
  combinatorial proof \cite{BM1,BM2}. Our proof works directly with the
group elements and permutations. However, the proof could also be
written in terms of {\em maps} (e.g., see \cite{LZ,GS}) through a
coloring argument as done in \cite{B,SchaefferVassilieva,CFF} for the
case of factorizations with two factors and in \cite{BM1} for the
case of factorizations with $k$ factors. 

Our main results for the subgroup $G(d,d,n)$ of $G(d,1,n)$ count {\em transitive} factorizations (for the natural action on a set of size $dn$) of a Coxeter element. They are written in terms of the polynomials
\[
\poly{k}{d}{x} := (x - (k - 1)(d - 1)) \cdot (x - 1)^{(d)}_{k - 1} = \prod_{i = 1}^k (x - e^*_i),
\]
where again the $e^*_i$ are the coexponents of the
group. 

\begin{theorem} \label{thm:mainG(d,d,n)}
For $d > 1$, let $G = G(d, d, n)$ and let 
 $b^{(d)}_{r_1,\ldots,r_k}$ be the number of transitive factorizations of a Coxeter element $c_{(d, d, n)}$ in $G$ as a product of $k$
elements of $G$ with fixed space dimensions $r_1,\ldots,r_k$, respectively. 
Then
\begin{align} \label{eq main G(d,d,n)}
\frac{n}{(n - 1) \cdot |G|} \cdot \sum_{r,s \geq 0} b^{(d)}_{r,s} x^r y^s &= \sum_{p,q \geq 1}
 \binom{n-2}{p-1; q-1; n -p - q} \frac{\poly{p}{d}{x}}{d^{p-1} p!}
 \frac{\poly{q}{d}{x}}{d^{q-1} q!},\\ \intertext{and more generally}
\frac{n^k}{|G|^{k-1}}\cdot \sum_{r_1,\ldots,r_k
  \geq 0} b^{(d)}_{r_1,\ldots,r_k} x_1^{r_1}\cdots x_k^{r_k}
 &= \sum_{p_1,\ldots,p_k \geq 1}
 M^n_{p_1,\ldots,p_k} \frac{\poly{p_1}{d}{x_1}}{d^{p_1-1} (p_1-1)!} \cdots 
 \frac{\poly{p_k}{d}{x_k}}{d^{p_k-1} (p_k-1)!},
 \end{align}
 where $M^n_{p_1,\ldots,p_k}$ is defined in \eqref{definition M coeff}.
 \end{theorem}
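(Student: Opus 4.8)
The plan is to reduce the statement about $G(d,d,n)$ to the already-established result for $G(d,1,n)$ in Theorem~\ref{thm: main G(d,1,n)}, exploiting the fact that $G(d,d,n)$ is the index-$d$ subgroup of $G(d,1,n)$ consisting of elements whose nonzero entries have product $1$. The transitivity hypothesis (for the action on the $dn$-element set) is the key structural constraint, and it plays the role that connectivity of maps plays in the symmetric-group story: it is exactly the condition needed to single out, among all factorizations in the larger group $G(d,1,n)$, those whose factors lie in the subgroup and whose product is a genuine Coxeter element of $G(d,d,n)$ rather than a reducible element.

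First I would set up the combinatorial model for elements of $G(d,d,n)$ and their fixed space dimensions, recording how the fixed-space dimension of a factor in $G(d,d,n)$ relates to the combinatorial data (number of cycles, together with the ``color'' or root-of-unity statistics on each cycle) exactly as was done for $G(d,1,n)$ in Section~\ref{sec:G(d, 1, n)}. The coexponents of $G(d,d,n)$ are $e_i^* = 1, 1+d, 1+2d, \ldots, 1+(n-2)d$ together with the exceptional value $(n-1)(d-1)$, which is precisely what produces the extra linear factor $\bigl(x - (n-1)(d-1)\bigr)$ distinguishing $\poly{k}{d}{x}$ from $(x-1)^{(d)}_k$; I would verify at the outset that $\poly{k}{d}{x} = \prod_{i=1}^k (x - e_i^*)$ with these coexponents, so that the generating function is expressed in the natural coexponent basis.

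Next I would carry out the transfer. The idea is that a transitive factorization in $G(d,d,n)$ lifts to (or is a restriction of) a factorization in $G(d,1,n)$, and transitivity forces the relevant ``determinant'' or product-of-entries bookkeeping to collapse. Concretely, I expect the combinatorial bijection to peel off one factor (or one marked cell), which is the source of the prefactors $n^k/|G|^{k-1}$ on the left and of the shift from $M^n_{p_1,\ldots,p_k}$ divided by $p_i!$ to division by $(p_i-1)!$ on the right — the transitivity/rooting condition effectively marks one element and removes a factor of $p_i$ from each index. The two-factor identity \eqref{eq main G(d,d,n)} should then follow from the general one by specialization, with the binomial coefficient $\binom{n-2}{p-1;q-1;n-p-q}$ emerging as the explicit two-variable value of $M^n_{p-1,q-1}$ after the reindexing forced by transitivity.

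The hard part will be controlling the transitivity condition combinatorially: in $G(d,1,n)$ every factorization decomposes according to the orbit structure, and I must show that demanding a single transitive orbit on the $dn$ points (rather than merely a product equal to a Coxeter element) is exactly captured by the rooting that produces the $(p_i-1)!$ denominators and the $\poly{}{}{}$ polynomials with their extra linear factor. I anticipate needing a Lagrange-inversion or exponential-formula argument to pass from the ``all factorizations'' generating function of $G(d,1,n)$ to the ``connected/transitive'' one for $G(d,d,n)$, tracking how the exceptional coexponent $(n-1)(d-1)$ appears; making this precise — and checking that the product-one constraint on the nonzero entries is automatically satisfied by transitive configurations, so that no separate determinant sum survives — is where the real work lies.
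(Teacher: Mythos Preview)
Your plan has a structural gap at its starting point: the Coxeter element $c_{(d,d,n)}$ has underlying permutation the $(n-1)$-cycle $(1\,2\cdots(n-1))(n)$, not an $n$-cycle, so it is \emph{not} a Coxeter element of $G(d,1,n)$, and Theorem~\ref{thm: main G(d,1,n)} says nothing about factoring it. There is no direct ``lift'' of a $G(d,d,n)$ factorization to a factorization of the $G(d,1,n)$ Coxeter element. Relatedly, your claim that ``the product-one constraint on the nonzero entries is automatically satisfied by transitive configurations'' is false: transitivity concerns the orbit structure on the $dn$-point set, while membership in $G(d,d,n)$ concerns the weight (sum of exponents) of each factor, and these are independent conditions --- a factorization $c_{(d,d,n)} = u_1\cdots u_k$ in $G(d,1,n)$ can be transitive with individual $u_i$ of nonzero weight. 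For the same reason the exponential-formula/Lagrange-inversion idea does not apply: that machinery relates connected and arbitrary structures built from the \emph{same} pieces, whereas here you are trying to relate factorizations of different group elements in different groups.

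What the paper actually does is project to $\Symm_n$ rather than to $G(d,1,n)$: the image of a transitive factorization of $c_{(d,d,n)}$ is a \emph{transitive} factorization of the $(n-1)$-cycle $\cnn$ in $\Symm_n$. This forces a genuinely new $\Symm_n$ ingredient, Theorem~\ref{S_n (n-1) cycle theorem k factors}, enumerating such factorizations (proved there algebraically via characters of near-hook shapes). The combinatorial heart is then Lemma~\ref{(d, d, n) transitive lemma}, which counts preimages under this projection by a delicate ``thread'' argument tracking the path of $n$ through the factors and assigning weights in a carefully chosen order; the resulting identity $C^{(d,d,n)}_{\pp} = \sum_{S\subseteq [k]} d^{(k-1)n - |S| + 1} C^{\langle n-1,1\rangle}_{\pp+\ee_S}$, combined with the basis identity $\poly{r}{d}{xd+1}/(d^{r-1}r!) = d\binom{x}{r} + \binom{x}{r-1}$, produces the $(p_i-1)!$ denominators and the extra coexponent factor you correctly anticipated --- but via $\Symm_n$, not $G(d,1,n)$.
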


In Section~\ref{sec:ddn}, we give a combinatorial proof of these results.  The proof relies on an enumeration of transitive factorizations of an $(n - 1)$-cycle in $\Symm_n$ into $k$ factors that appears to be new -- its proof may be found in Section~\ref{sec:n - 1 cycle}.

In the remainder of the paper, we consider a variety of extensions and
applications of these results.  In Section~\ref{sec:exceptional}, we
explore the same question in the exceptional complex reflection
groups, using an algebraic approach.  This produces results that are
strikingly similar to the results from the infinite families in many cases, but
ultimately no uniform formula along the lines of Theorem~\ref{thm:cs}.
The question of whether a uniform theorem exists is raised in Section~\ref{sec:remarks}.

In Section~\ref{sec:applications}, we show how to derive the
Chapuy--Stump result from our main results, giving a fully
combinatorial proof in the case of $G(d, 1, n)$.  We also consider the special case of \emph{genus-$0$ factorizations}, which are extremal with respect to a natural subadditivity of fixed space.  As a consequence, we derive a characterization of the \emph{poset of $W$-noncrossing partitions} that has (surprisingly) been overlooked before now.

In Section~\ref{cycle type}, we refine the result for $G(d, 1, n)$ by
the group orbit of the fixed space, or equivalently by an appropriate
notion of cycle type.  The proof is again fully combinatorial.  In the
genus-$0$ case, this result gives an analogue of the Goulden--Jackson cactus
formula \cite[Thm.~3.2]{GJ} and specializes to a result of Krattenthaler--M\"uller in type $B$ \cite[Thm.~7(i)]{KMtypeB}. 
Finally, in Section~\ref{sec:remarks}, we end with a number of remarks and open questions, including constructions of maps associated to factorizations in $G(d, 1, n)$.

An extended abstract of this work appeared in \cite{FPSAC}.

\subsection*{Acknowledgements}
We are indebted to Drew Armstrong, Olivier Bernardi, Theo
Douvropoulos, John Irving, Vic Reiner, Gilles Schaeffer,  Christian
Stump, and the anonymous referees whose comments, questions, and suggestions led to significant
improvements in the paper.  JBL was partially supported by a grant from the Simons Foundation (634530) and by an ORAU Powe award.  AHM was partially supported by the NSF grant DMS-1855536.

\section{Background}
\label{sec:background}

\subsection{Known factorization results in $\mathfrak{S}_n$}

We begin by discussing in more detail the background behind Theorem~\ref{S_n theorem}.  Let $\cn$ be a fixed $n$-cycle in $\Symm_n$, and for integers
$r_1,\ldots,r_k$ let $a_{r_1,\ldots,r_k}$ be the number of $k$-tuples
$(\pi_1,\ldots,\pi_k)$ of elements in $\Symm_n$ such that $\pi_i$ has $r_i$
cycles for $i=1,\ldots,k$ and $\pi_1 \cdots  \pi_k = \cn$.
Theorem~\ref{S_n theorem} is a corollary of a result obtained by Jackson \cite[Thm.~4.3]{Jackson}; our formulation follows Schaeffer and Vassilieva \cite[Thm.~1.3]{SchaefferVassilieva}.  The coefficients on the RHS of \eqref{eqn S_n k factors} are defined as follows.  Given a positive integer $k$ and nonnegative integers $n$ and $p_1,\ldots,p_k$, let
\begin{equation} \label{definition M coeff}
M^n_{p_1,\ldots,p_k} := \sum_{t=0}^{\min(p_i)} (-1)^t \binom{n}{t}
\prod_{i=1}^k \binom{n-t}{p_i-t} = [x_1^{p_1} \cdots x_k^{p_k}] \Bigl(
  (1+x_1)\cdots (1+x_k) - x_1\cdots x_k\Bigr)^n,
\end{equation}
where the square brackets in the third expression represent coefficient extraction.  This number counts $n$-tuples $(S_1,\ldots,S_n)$ of proper subsets of $[k] := \{1, \ldots, k\}$ such that exactly $p_j$ of the sets contain $j$. It is easy to see that the
 $M^n_{\pp}$  satisfy the following recurrence.

\begin{proposition} \label{prop:recurrence Ms}
One has
$ \displaystyle
M^n_\pp
=\sum_{S \subsetneq [k]} M^{n-1}_{\pp -  \ee_{S}}
 = \sum_{\varnothing \neq T\subseteq [k]} M^{n-1}_{\pp - {\bf 1} + \ee_{T}}  
$
where $\ee_{S}$ denotes the indicator vector for the set $S$ and ${\bf 1} := \ee_{[k]}$ is the all-ones vector. 
\end{proposition}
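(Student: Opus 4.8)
The plan is to read the recurrence off the combinatorial interpretation of $M^n_\pp$ stated just before the proposition: $M^n_\pp$ counts $n$-tuples $(S_1,\ldots,S_n)$ of proper subsets of $[k]$ in which, for each $j$, exactly $p_j$ of the sets contain $j$. I would condition on the last entry $S_n$. Fixing a proper subset $S \subsetneq [k]$ and counting the tuples with $S_n = S$, I delete $S_n$ to obtain an $(n-1)$-tuple $(S_1,\ldots,S_{n-1})$ of proper subsets of $[k]$ in which, for each $j$, exactly $p_j - [j \in S]$ of the remaining sets contain $j$. The vector recording the containments removed by $S$ is exactly $\ee_S$, so the number of such $(n-1)$-tuples is $M^{n-1}_{\pp - \ee_S}$ (with the convention, consistent with \eqref{definition M coeff}, that this is $0$ when some coordinate of $\pp - \ee_S$ is negative). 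Summing over all proper subsets $S \subsetneq [k]$ partitions the tuples counted by $M^n_\pp$ according to their last entry, which gives the first equality $M^n_\pp = \sum_{S\subsetneq[k]} M^{n-1}_{\pp - \ee_S}$.

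The second equality is then just a reindexing of the sum. A subset $S$ is proper precisely when its complement $T := [k]\setminus S$ is nonempty, and $\ee_S = {\bf 1} - \ee_T$, so $\pp - \ee_S = \pp - {\bf 1} + \ee_T$. Replacing the summation variable $S \subsetneq [k]$ by $T$ ranging over the nonempty subsets of $[k]$ transforms $\sum_{S\subsetneq[k]} M^{n-1}_{\pp - \ee_S}$ into $\sum_{\varnothing \neq T \subseteq [k]} M^{n-1}_{\pp - {\bf 1} + \ee_T}$, as claimed.

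If one prefers to avoid invoking the combinatorial model, I would instead argue directly from the generating-function expression in \eqref{definition M coeff}. Writing $F(x_1,\ldots,x_k) := (1+x_1)\cdots(1+x_k) - x_1\cdots x_k$ and expanding the product, the top squarefree monomial cancels and one finds $F = \sum_{S\subsetneq[k]} \prod_{j\in S} x_j$. Since $M^n_\pp = [x_1^{p_1}\cdots x_k^{p_k}]\,F^n$ and $F^n = F\cdot F^{n-1}$, I substitute this expansion of $F$ and extract the coefficient of $x_1^{p_1}\cdots x_k^{p_k}$: multiplying $F^{n-1}$ by the monomial $\prod_{j\in S}x_j$ shifts each exponent by $\ee_S$, so the contribution of the term indexed by $S$ is $M^{n-1}_{\pp - \ee_S}$. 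Summing over $S \subsetneq [k]$ gives the first equality, and the second again follows by the complementation $S \mapsto T = [k]\setminus S$.

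There is essentially no obstacle here, consistent with the ``it is easy to see'' framing. The only points that require a moment's care are the elementary expansion $F = \sum_{S\subsetneq[k]}\prod_{j\in S}x_j$, which is what drives both arguments, and the boundary convention that terms with a negative coordinate vanish — which matches the empty-sum/zero-binomial behavior built into \eqref{definition M coeff}, so no separate verification is needed.
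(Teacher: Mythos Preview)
Your proposal is correct and matches the paper's intent: the paper does not give an explicit proof, merely saying ``It is easy to see that the $M^n_{\pp}$ satisfy the following recurrence,'' with the combinatorial interpretation of $M^n_\pp$ stated immediately beforehand as the obvious tool. Your conditioning on $S_n$ is exactly the argument the reader is meant to supply, and your alternative generating-function derivation is a valid bonus.
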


Also, from the enumerative interpretation of $M^n_{\pp}$ one has that $M^n_{p_1,p_2}$ is given by the
multinomial coefficient $\binom{n}{p_1; p_2; n-p_1-p_2}$, and that $M^n_{\pp} = 0$ whenever $p_1 + \dots + p_k > n(k - 1)$. For
$k \geq 3$, the $M^n_{\pp}$ are not given by a multinomial coefficient
or other product formula, except for the following extremal case.

\begin{proposition} \label{prop:Ms2multinomial}
If $p_1+\dots +p_k = n(k-1)$ then $M^n_{p_1, \ldots, p_k} =
\binom{n}{n-p_1; \cdots; n-p_k}$.
\end{proposition}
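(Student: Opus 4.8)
The plan is to use the combinatorial interpretation of $M^n_\pp$ recorded just after \eqref{definition M coeff}: it counts $n$-tuples $(S_1,\ldots,S_n)$ of proper subsets of $[k]$ in which the index $j$ belongs to exactly $p_j$ of the sets. Equivalently, expanding the generating function in \eqref{definition M coeff} as $\left((1+x_1)\cdots(1+x_k) - x_1\cdots x_k\right)^n = \bigl(\sum_{T \subsetneq [k]} \prod_{j \in T} x_j\bigr)^n$, the quantity $M^n_\pp$ is the coefficient of $x_1^{p_1}\cdots x_k^{p_k}$, and each contributing monomial corresponds to such a tuple, with $S_i$ the subset selected from the $i$th factor.

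The key step is a degree count. In any contributing tuple one has $\sum_{j=1}^k p_j = \sum_{i=1}^n |S_i|$, since both sides count incidences between indices and sets. Each $S_i$ is a \emph{proper} subset of $[k]$, so $|S_i| \le k-1$, whence $\sum_j p_j \le n(k-1)$; this reproves the vanishing claim stated just before Proposition~\ref{prop:Ms2multinomial}. Under the present hypothesis $\sum_j p_j = n(k-1)$ the bound is attained, and since it is a sum of $n$ termwise inequalities $|S_i| \le k-1$, equality forces $|S_i| = k-1$ for \emph{every} $i$. Thus each $S_i = [k] \setminus \{c_i\}$ is the complement of a single element $c_i \in [k]$.

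It then remains to recount. Writing $m_j := \#\{i : c_i = j\}$, the condition that $j$ lie in exactly $p_j$ of the sets reads $p_j = \#\{i : c_i \neq j\} = n - m_j$, so $m_j = n - p_j$; note that $\sum_j m_j = nk - \sum_j p_j = n$, so these are legitimate part sizes. Hence the contributing tuples are in bijection with sequences $(c_1,\ldots,c_n) \in [k]^n$ having exactly $m_j = n - p_j$ entries equal to $j$, of which there are $\binom{n}{m_1; \cdots; m_k} = \binom{n}{n-p_1; \cdots; n-p_k}$, as claimed. There is no genuine obstacle here; the only points requiring care are that the termwise bound (not merely the average) forces every $S_i$ to have size exactly $k-1$, and that the resulting $m_j$ sum to $n$ so that the multinomial coefficient is meaningful.
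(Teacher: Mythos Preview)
Your proof is correct and follows essentially the same approach as the paper's own proof: both use the combinatorial interpretation of $M^n_{\pp}$ as counting $n$-tuples of proper subsets of $[k]$, observe that the hypothesis $\sum_j p_j = n(k-1)$ together with $|S_i|\le k-1$ forces every $S_i$ to be the complement of a single element, and then biject with sequences $(c_1,\ldots,c_n)\in[k]^n$ having prescribed multiplicities $n-p_j$. Your write-up is slightly more explicit in verifying that $\sum_j(n-p_j)=n$, but the argument is the same.
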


\begin{proof}
Suppose that $p_1+ \dots + p_k = n(k-1)$ and that $(S_1,\ldots,S_n)$ is an 
$n$-tuple of proper subsets of $[k]$ counted
by $M^n_{p_1,\ldots,p_k}$.  Let $c_i:= |S_i|$
for $i=1,\ldots,n$. Since $c_i \leq k-1$ and $c_1+ \dots + c_n =
p_1+ \dots +p_k = n(k - 1)$, it must be the case that $c_i = k-1$ for $i=1,\ldots,n$. Thus $S_i = [k]
\smallsetminus \{a_i\}$ for some $a_i \in [k]$.  Then the correspondence $(S_1, \ldots, S_n) \longleftrightarrow  (a_1,\ldots,a_n)$ is a bijection between
the $n$-tuples of sets counted by $M^n_{\pp}$ and $n$-tuples of elements of
$[k]$ such that $j$ appears $n-p_j$ times for $j = 1, \ldots, k$. The latter set is obviously counted by the desired multinomial coefficient, and the result follows.
\end{proof}

Jackson's proof of Theorem~\ref{S_n theorem} uses an algebraic approach based on work of Frobenius from the late 19th century; these methods are described in Section~\ref{sec:Frobenius} below, after which we apply them to give a similar result for the $(n - 1)$-cycle in $\Symm_n$. Bijective proofs of the case $k=2$ were given by Schaeffer--Vassilieva \cite{SchaefferVassilieva}, Chapuy--F\'eray--Fusy \cite{CFF}, and
Bernardi \cite{B}.  Bernardi and Morales \cite{BM1,BM2} extended Bernardi's approach to give a combinatorial proof of Jackson's formula for all $k$ in terms of maps. These combinatorial proofs use an interpretation of the change of basis in \eqref{S_n theorem k factors} that we describe now. 

Let $\CCC^{\langle n\rangle}_{p_1,\ldots,p_k}$ be the set of factorizations in
$\mathfrak{S}_n$ of the fixed $n$-cycle $\cn$ as a product $\pi_1\cdots
\pi_k$ such that for each $i$, the cycles of $\pi_i$ are colored
with $p_i$ colors with each of the colors being used at least once to
color a cycle of $\pi_i$.  (In particular, once the factorization is
fixed, this means that the colorings of the cycles in the different
factors are completely independent of each other.) Let
  $C^{\langle n \rangle}_{p_1,\ldots,p_k} = |\CCC^{\langle
    n\rangle}_{p_1,\ldots,p_k}|$ be the number of such colored
  factorizations. 

\begin{remark} \label{remark: independence color sets}
Note that the number $C^{\langle n \rangle}_{p_1,\ldots,p_k}$ of
colored factorizations does not depend on the set of
  $p_i$ colors used to color the cycles of
  $u_i$. In an abuse of notation, depending on context, we will use
  the same symbols $\CCC^{\langle n\rangle}_{p_1,\ldots,p_k}$ and
  $C^{\langle n\rangle}_{p_1,\ldots,p_k}$ to denote the set and the
  number of colored factorizations where for each $\pi_i$ we use color set $[p_i]$, or
  $\{0,1,\ldots,p_i-1\}$, or a $p_i$-subset of a larger set.
\end{remark}

\begin{proposition}
\label{prop:change of basis}
With $a_{r_1, \ldots, r_k}$ and $C^{\langle n \rangle}_{p_1, \ldots, p_k}$ as above, one has
\begin{equation} \label{eq change of basis}
\sum_{r_1,\ldots,r_k \geq 1} a_{r_1,
  \ldots, r_k} x_1^{r_1} \cdots x_k^{r_k} = \sum_{p_1,\ldots,p_k \geq
  1} C^{\langle n \rangle}_{p_1,\ldots,p_k} \binom{x_1}{p_1}\cdots \binom{x_k}{p_k}.
\end{equation}
\end{proposition}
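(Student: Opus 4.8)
The plan is to read \eqref{eq change of basis} as an identity of polynomials in $x_1,\ldots,x_k$ and to verify it by evaluation at nonnegative integers. Both sides are polynomials of bounded degree: on the left only finitely many $a_{r_1,\ldots,r_k}$ are nonzero, since each factor has at most $n$ cycles, and on the right each $\binom{x_i}{p_i}$ has degree $p_i \le n$. Hence it is enough to check the equation after substituting arbitrary nonnegative integers $x_i = m_i$, because two polynomials in $k$ variables that agree on all of $\NN^k$ coincide. I would therefore fix $m_1,\ldots,m_k \in \NN$ and interpret both sides as counting the same set $\mathcal{F}$ of \emph{arbitrarily colored} factorizations: a factorization $\pi_1\cdots\pi_k = \cn$ equipped, for each $i$, with a completely unrestricted coloring of the cycles of $\pi_i$ by a palette of $m_i$ colors.

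First I would count $\mathcal{F}$ by the cycle numbers $(r_1,\ldots,r_k)$ of the factors: a factorization with $r_i$ cycles in $\pi_i$ admits $m_i^{r_i}$ colorings of those cycles, and the colorings of distinct factors are independent (as noted in the definition of $\CCC^{\langle n\rangle}$), so $|\mathcal{F}| = \sum a_{r_1,\ldots,r_k}\prod_i m_i^{r_i}$, which is the left side at $x_i = m_i$. Then I would count $\mathcal{F}$ by the number $p_i$ of colors actually used on $\pi_i$: recording such a coloring amounts to choosing the $p_i$-subset of used colors, in $\binom{m_i}{p_i}$ ways, together with a surjective coloring of the cycles of $\pi_i$ onto that subset. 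By Remark~\ref{remark: independence color sets} the number of surjectively $(p_1,\ldots,p_k)$-colored factorizations is $C^{\langle n\rangle}_{p_1,\ldots,p_k}$ no matter which subsets are chosen, so summing over $p_1,\ldots,p_k$ gives $|\mathcal{F}| = \sum C^{\langle n\rangle}_{p_1,\ldots,p_k}\prod_i\binom{m_i}{p_i}$, the right side at $x_i = m_i$. Equating the two expressions for $|\mathcal{F}|$ proves the specialized identity and hence the proposition.

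I do not expect a serious obstacle here: the statement is the classical change of basis between the monomial basis $\{x^r\}$ and the binomial basis $\{\binom{x}{p}\}$, with transition coefficients $p!\,S(r,p)$ equal to the number of surjections from an $r$-set onto a $p$-set (equivalently, $m^r = \sum_p \binom{m}{p}\,p!\,S(r,p)$). The only steps needing care are the appeal to Remark~\ref{remark: independence color sets} to make the surjective count independent of the chosen palette, and the passage from agreement on $\NN^k$ to an identity of polynomials, which is justified by the degree bounds recorded above.
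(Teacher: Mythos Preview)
Your proof is correct and follows essentially the same approach as the paper's own proof: both interpret the specialized identity at nonnegative integers $x_i = m_i$ as counting, in two ways, factorizations with arbitrary colorings of cycles from palettes of size $m_i$, then pass to a polynomial identity. Your write-up is slightly more explicit about the degree bounds and the role of Remark~\ref{remark: independence color sets}, but the argument is the same.
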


\begin{proof}
Let each $x_i$ be a nonnegative integer. The LHS of \eqref{eq change of basis} counts  
factorizations of the cycle $\cn = (12\cdots n)$ as a product
  $\cn = \pi_1\cdots \pi_k$, where
for $i=1,\ldots,k$, each cycle of $\pi_i$ is colored with a
  color in $[x_i]$.
These colored factorizations are also counted by the RHS of \eqref{eq change of basis}: for
$p_1,\ldots,p_k \in \mathbb{N}$ and $i=1,\ldots,k$, choose $p_i$
colors from $x_i$ colors available and a colored factorization in
$\CCC^{\langle n\rangle}_{p_1,\ldots,p_k}$ where exactly those $p_i$ colors are used in
the factor $\pi_i$.  Finally, since \eqref{eq change of basis} is valid for all nonnegative integer values of the $x_i$, it is valid as a polynomial identity.
\end{proof}

The following corollary is an immediate consequence of Theorem~\ref{S_n theorem k factors} and Proposition~\ref{prop:change of basis}.

\begin{corollary}
\label{S_n corollary}
The number of colored factorizations of an $n$-cycle in $\Symm_n$ is $C^{\langle n \rangle}_{p_1,\ldots,p_k}  =  (n!)^{k-1} M^{n-1}_{p_1-1,\ldots,p_k-1}$.
\end{corollary}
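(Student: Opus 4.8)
The plan is to equate the two expansions of a single generating polynomial. Both Theorem~\ref{S_n theorem k factors} and Proposition~\ref{prop:change of basis} express the polynomial $\sum_{r_1,\ldots,r_k \geq 1} a_{r_1,\ldots,r_k}\, x_1^{r_1}\cdots x_k^{r_k}$ as a linear combination of the same family of products of falling factorials, so I would first put them in a common form and then read off coefficients.

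First I would record that $\binom{x}{p} = (x)_p/p!$, so that the right-hand side of \eqref{eq change of basis} in Proposition~\ref{prop:change of basis} becomes
\[
\sum_{p_1,\ldots,p_k \geq 1} C^{\langle n \rangle}_{p_1,\ldots,p_k}\, \frac{(x_1)_{p_1}}{p_1!} \cdots \frac{(x_k)_{p_k}}{p_k!}.
\]
On the other hand, multiplying \eqref{eqn S_n k factors} through by $(n!)^{k-1}$ gives
\[
\sum_{r_1,\ldots,r_k \geq 1} a_{r_1,\ldots,r_k}\, x_1^{r_1}\cdots x_k^{r_k} = (n!)^{k-1}\sum_{p_1,\ldots,p_k \geq 1} M^{n-1}_{p_1-1,\ldots,p_k-1}\, \frac{(x_1)_{p_1}}{p_1!}\cdots \frac{(x_k)_{p_k}}{p_k!}.
\]
Since the left-hand side here is identical to the left-hand side of \eqref{eq change of basis}, the two displayed right-hand sides are equal as polynomials in $x_1,\ldots,x_k$.

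The final step is a coefficient comparison. The products $\prod_{i=1}^k (x_i)_{p_i}/p_i!$ (equivalently $\prod_{i=1}^k \binom{x_i}{p_i}$), ranging over all tuples $(p_1,\ldots,p_k)$, form a $\QQ$-basis of $\QQ[x_1,\ldots,x_k]$: in a single variable the falling factorials $(x)_p$ are a basis because they are triangular against the monomials, each $(x)_p$ having degree $p$ and leading coefficient $1$, and the multivariable statement follows by taking products. Equating the coefficients of $\prod_{i=1}^k (x_i)_{p_i}/p_i!$ in the two expansions then yields $C^{\langle n \rangle}_{p_1,\ldots,p_k} = (n!)^{k-1} M^{n-1}_{p_1-1,\ldots,p_k-1}$, as claimed. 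I do not anticipate a genuine obstacle; the only point requiring care is the basis claim that licenses the coefficientwise identification, and that is standard.
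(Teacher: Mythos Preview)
Your proof is correct and is exactly the argument the paper has in mind: the paper states only that the corollary is ``an immediate consequence of Theorem~\ref{S_n theorem k factors} and Proposition~\ref{prop:change of basis},'' and you have simply spelled out that immediate consequence, including the (standard) justification that the products $\prod_i \binom{x_i}{p_i}$ form a basis so that coefficients may be compared.
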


\subsection{Complex reflection groups} \label{subsec:crg}

In this section, we give an account of the complex reflection groups,
paying particular attention to the ``combinatorial'' groups $G(d, 1,
n)$ and $G(d, d, n)$.  For more thorough background, see
\cite{LehrerTaylor}.  

\subsubsection{Basic definitions}

Given a complex vector space $V$ of dimension $n$, a linear transformation $r$ on $V$ is called a \emph{reflection} if the dimension of the fixed space of $r$ (i.e., the set of vectors $v$ such that $r(v) = v$) is $n - 1$, that is, if $r$ fixes a hyperplane.  A \emph{complex reflection group} is a finite subgroup of $\GL(V)$ generated by its subset of reflections.  A complex reflection group is \emph{irreducible} if it does not stabilize any nontrivial subspace of $V$, and every complex reflection group decomposes uniquely as a direct product of irreducibles.  A complex reflection group is \emph{well-generated} if it acts irreducibly on a space of dimension $m$ and has a generating set consisting of $m$ reflections.

\subsubsection{Key examples}

The most common examples of complex reflection groups are the finite
Coxeter groups, including the dihedral groups, the symmetric group
$\Symm_n$ (type $A_{n - 1}$), and the hyperoctahedral group of signed
permutations (type $B_n$) and its index-$2$ subgroup of ``even-signed
permutations'' (type $D_n$), whose elements have an even number
of negative entries.  All real reflection groups are well-generated.  (For $\Symm_n$, the space on which it acts irreducibly has dimension $m = n - 1$; for the signed and even-signed permutations, $m = n$.)  

There are two infinite families of well generated irreducible complex
reflection groups.  The groups in the first family are the wreath products $G(d, 1, n) = (\ZZ/d\ZZ) \wr \Symm_n$ of the symmetric group by a cyclic group of order $d$.  Concretely, the elements of this group may be realized as \emph{generalized permutation matrices} with one nonzero entry in each row and column, each of which is a complex $d$th root of unity -- see Figure~\ref{fig:Cox}.  Thus $G(2, 1, n)$ is the hyperoctahedral group of signed permutations.  More compactly, elements of $G(d, 1, n)$ may be identified with pairs $[\pi; a]$ where $\pi$ is a permutation in $\Symm_n$ and $a = (a_1, \ldots, a_n)$ is a tuple of elements of $\ZZ/d\ZZ$.  We say that $a_i$ is the \emph{weight} of $i$ in $[\pi; a]$ and that $\pi$ is the \emph{underlying permutation}.  In this notation, the product of two group elements is given by
\[
[\pi; a] \cdot [\sigma; b] = [\pi\sigma; \sigma(a) + b]
\]
where $\sigma(a) := (a_{\sigma(1)}, \ldots, a_{\sigma(n)})$.  The
underlying permutation $\pi$ is the image of $[\pi; a]$ under the
natural \emph{projection map} $G(d, 1, n) \twoheadrightarrow \Symm_n$.
A \emph{cycle} in $[\pi; a] \in G(d, 1, n)$ means a cycle in $\pi$.
The \emph{weight} of a cycle is the sum in
  $\mathbb{Z}/d\mathbb{Z}$ of the weights of the elements in the cycle.

\begin{figure}
\[
\begin{bmatrix}
&& & \omega \\
1&&& \\
& 1 && \\
&& 1 &
\end{bmatrix}
\hspace{2in}
\begin{bmatrix}
&& \omega & \\
1 &&& \\
& 1 && \\
&&& \omega^{-1}
\end{bmatrix}
\]
\caption{The matrix representation of the generalized permutations $[(1234); (0, 0, 0, 1)]$ in $G(d, 1, 4)$ (left) and $c_{(d, d, 4)} := [(123)(4); (0, 0, 1, -1)]$ in $G(d, d, 4) \subset G(d, 1, 4)$ (right).  Here $d > 1$ and $\omega = \exp(2\pi i/d)$ is a primitive complex $d$th root of unity.  In $c_{(d, d, 4)}$, the cycle $(123)$ has weight $1$ and the cycle $(4)$ has weight $-1$.  
}
\label{fig:Cox}
\end{figure}

When $d > 1$, there are two ``flavors'' of reflections in $G(d, 1, n)$: the diagonal reflections, which have the identity as underlying permutation and a single diagonal entry of nonzero weight, and the transposition-like reflections, whose underlying permutation is a transposition and whose cycles all have weight $0$ -- see Figure~\ref{fig:reflections}.
The second infinite family of well generated complex reflection groups contains, for each $d > 1$ and $n \geq 2$, the subgroup of $G(d, 1, n)$ generated by the transposition-like reflections; it is denoted $G(d, d, n)$.  Equivalently, $G(d, d, n)$ contains those elements of $G(d,1, n)$ of weight $0$ (i.e., generalized permutation matrices in which the product of the nonzero entries is $1$).  In the case $d = 2$, it is exactly the Coxeter group of type $D_n$, and in the case $n = 2$, it is the dihedral group of order $2d$.  The group $G(d, d, n)$ is always well generated, and it is irreducible except when $d = n = 2$.

\begin{figure}
\[
\begin{bmatrix}
a && & \\
&1&& \\
&& 1 & \\
&&& 1
\end{bmatrix}
\hspace{2in}
\begin{bmatrix}
& b^{-1}& & \\
b&&& \\
&& 1 & \\
&&& 1
\end{bmatrix}
\]
\caption{A diagonal reflection and a transposition-like reflection in $G(d, 1, 4)$.  Here $a$ represents a $d$th root of $1$ other than $1$ itself, and $b$ represents an arbitrary $d$th root of $1$.}
\label{fig:reflections}
\end{figure}

In addition to the infinite families of irreducible complex reflection groups, there are $34$ exceptional groups.  Of these, $26$ are well generated, including the six exceptional Coxeter groups (of types $H_3, F_4, H_4, E_6, E_7$, and $E_8$).  These are not the main focus of this paper, but they are discussed further in Section~\ref{sec:exceptional}.

\subsubsection{Fixed space dimension}

It is easy to see that conjugacy classes in $G(d, 1, n)$ are uniquely determined by the cycle type of the underlying permutation together with the multiset of weights of the cycles of each length.  Equivalently, for each weight $j = 0, \ldots, d - 1$, we have a partition (possibly empty) recording the lengths of the cycles of weight $j$.  Thus, conjugacy classes in $G(d, 1, n)$ are unambiguously indexed by tuples $(\lambda^{(0)}, \ldots, \lambda^{(d - 1)})$ of partitions of total size $n$. The following proposition is straightforward.
\begin{prop}
\label{prop:reflection length}
The fixed space dimension of an element $w$ in $G(d, 1, n)$ whose conjugacy class is indexed by $(\lambda^{(0)}, \ldots, \lambda^{(d - 1)})$ is equal to $\ell(\lambda^{(0)})$, the number of cycles of weight $0$ in $w$.
\end{prop}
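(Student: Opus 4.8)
The plan is to exploit the fact that the fixed space dimension is a conjugacy invariant: conjugate matrices have the same eigenvalues with the same multiplicities, and the fixed space is exactly the eigenspace of $w$ for the eigenvalue $1$, so it suffices to compute the dimension of the $1$-eigenspace for any convenient representative $w = [\pi; a]$ of the given conjugacy class. First I would observe that, after reordering the coordinates so that the cycles of $\pi$ occupy consecutive blocks, the matrix of $w$ becomes block diagonal with one block $B$ for each cycle of $\pi$. The fixed space then decomposes as a direct sum over cycles, and its dimension is the sum over cycles of the dimension of the $1$-eigenspace of the corresponding block. Hence the whole computation reduces to understanding a single cycle.

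Next I would analyze the block associated to a single cycle of length $\ell$ and weight $j$. Such a block is an $\ell \times \ell$ weighted cyclic-shift matrix whose nonzero entries are the roots of unity $\omega^{a_{i_1}}, \ldots, \omega^{a_{i_\ell}}$ (with $\omega = \exp(2\pi i / d)$) placed according to the cyclic action of $\pi$. The key computation is that its characteristic polynomial is $x^\ell - \omega^{a_{i_1} + \cdots + a_{i_\ell}} = x^\ell - \omega^{j}$: in the determinant expansion of $xI - B$ only the identity permutation (contributing $x^\ell$) and the single full $\ell$-cycle (contributing $-\prod_k \omega^{a_{i_k}}$) survive, and the cycle weight is by definition $j \equiv \sum_k a_{i_k} \pmod{d}$. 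The eigenvalues of the block are therefore exactly the $\ell$ distinct $\ell$-th roots of $\omega^j$ (and the degenerate case $\ell = 1$ gives the $1\times 1$ block $[\omega^{j}]$, consistent with the same formula).

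From here the conclusion is immediate. The value $1$ is an eigenvalue of the block if and only if $1^\ell = \omega^j$, i.e.\ if and only if $\omega^j = 1$, i.e.\ if and only if the cycle has weight $j \equiv 0 \pmod{d}$; and in that case the block's eigenvalues are the $\ell$-th roots of unity, which are distinct, so $1$ occurs with multiplicity exactly $1$. Thus every weight-$0$ cycle contributes exactly $1$ to the dimension of the fixed space and every cycle of nonzero weight contributes $0$. Summing over all cycles, the fixed space dimension equals the number of weight-$0$ cycles, which is precisely $\ell(\lambda^{(0)})$.

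I expect the only real obstacle to be the careful verification of the block characteristic polynomial $x^\ell - \omega^j$ (equivalently, checking that the weighted cyclic block is diagonalizable with eigenvalues the distinct $\ell$-th roots of $\omega^j$) and, relatedly, pinning down the matrix convention for $[\pi; a]$ so that the product of the nonzero entries around a cycle reproduces $\omega^j$. Once this standard linear-algebra fact is in place, the block decomposition and the counting are routine.
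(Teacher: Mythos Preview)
Your argument is correct and is exactly the sort of computation the authors have in mind: the paper does not actually prove this proposition, declaring it ``straightforward'' immediately before the statement. Your block-diagonal reduction by cycles and the computation that each cycle block has characteristic polynomial $x^\ell - \omega^j$ (hence contributes a one-dimensional $1$-eigenspace precisely when the cycle weight $j$ is $0$) is the standard way to flesh this out, and nothing more is needed.
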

Since $G(d, d, n) \subset G(d, 1, n)$, the same combinatorial formula gives the fixed space dimension for elements of the smaller group.  For the symmetric group $\Symm_n$ acting on $\CC^n$, the fixed space dimension of $w$ is exactly the number of cycles of $w$.

\subsubsection{Coxeter elements}
\label{sec:Coxeter elements}

An element $w$ in a complex reflection group $G$ is \emph{regular} if it has an eigenvector that does not lie on the fixed plane of any reflection in the group.  Since $w$ is of finite order, the associated eigenvalue is a root of unity; if it is a primitive root of order $k$, then we say that the integer $k$ is \emph{regular} as well.  The \emph{Coxeter number} $h$ of $G$ is the largest regular integer, and a \emph{Coxeter element} of $G$ is a regular element of order $h$ (but see Remark~\ref{rem:reflection automorphisms} below).  In the case of the symmetric group $\Symm_n$, the regular elements are the $n$-cycles, the $(n - 1)$-cycles, and their powers, the Coxeter number is $h = n$, and the Coxeter elements are exactly the $n$-cycles.  For $d > 1$, one can make the following concrete choice of Coxeter elements in the infinite families, illustrated in Figure~\ref{fig:Cox}: in $G(d, 1, n)$, take
\[
c := [(12\cdots n); (0, 0, \ldots, 0, 1)], 
\]
while in $G(d, d, n)$, take
\[
c_{(d,d,n)} := [(12\cdots(n - 1))(n); (0, \ldots, 0, 1, -1)].
\]

\begin{remark}
\label{rem:reflection automorphisms}
At least two nonequivalent definitions of Coxeter elements have
appeared in the literature (compare, e.g., the definitions in \cite[\S1]{BessisReiner} and \cite[\S2.3]{Theo}): under the more restrictive definition, every Coxeter element in $G(d, 1, n)$ (respectively, $G(d, d, n)$) is conjugate to the element $c$ (respectively, $c_{(d, d, n)}$) selected above.  Since conjugacy descends to a bijection between factorizations that preserves the fixed space dimension of each factor, it follows that all Coxeter elements (under the restrictive definition) have the same enumerations, and it is enough to consider just one.

Under the more general definition, one should also allow in $G(d, 1, n)$ the possibility of replacing the weight $1$ in $c$ with any cyclic generator of $\ZZ/d\ZZ$ (and then taking conjugates), and similarly for $G(d, d, n)$.  The resulting elements are \emph{not} all conjugate in the group $G(d, 1, n)$, so it is not \emph{a priori} clear that different Coxeter elements yield the same enumeration.  This difficulty may be resolved in two different ways.  One resolution is to examine the actual proofs presented in Sections~\ref{sec:G(d, 1, n)} and~\ref{sec:ddn} below.  Because of the combinatorial nature of these proofs, it is not difficult to see that they work equally well if the weights $1$ and $-1$ are replaced by $a$ and $-a$ for any nonzero element $a$ in $\ZZ/d\ZZ$, so all Coxeter elements in the more general sense (and even some elements that are not Coxeter elements under any definition) have the same enumeration.

An alternative, and more conceptual, resolution is based on the work
of Reiner--Ripoll--Stump \cite{RRS} that we describe now.  Given a
complex reflection group $G$ of rank $n$, understood to be represented
by a concrete choice of matrices in $\GL_n(\CC)$, define the
\emph{field of definition} $K_G$ to be the subfield of $\CC$ generated
by the traces of elements of $G$.  One can show that $G$ can be
represented over $\GL_n(K_G)$, i.e., that the representing matrices
can be taken to have entries in $K_G$.  The Galois group $\Gamma =
\operatorname{Gal}(K_G / \QQ)$ acts on $\GL_n(K_G)$ entrywise, and so
each member $\gamma$ of $\Gamma$ gives an isomorphism between $G$ and
some, possibly different, representation of $G$ over $K_G$.  By
\cite[Cor.\ 2.3]{RRS}, the group $\gamma(G)$ is conjugate in
$\GL_n(\CC)$ to $G$, i.e., there is some $g \in \GL_n(\CC)$ such that
$G = g \cdot \gamma(G) \cdot g^{-1}$.  Both $\gamma$ and conjugation
by $g$ preserve fixed-space dimension, so the combined automorphism $w
\mapsto g \cdot \gamma(w) \cdot g^{-1}$ does as well.  It is part of
the main result of \cite{RRS} that these \emph{reflection
  automorphisms} (called \emph{Galois automorphisms} in
\cite{MarinMichel}) act transitively on the Coxeter elements of $G$
under the more general definition.  These automorphisms descend to
bijections between factorizations that preserve fixed space dimension.  Consequently, the answers to the questions we consider are the same for all Coxeter elements, and it suffices to compute with a single, fixed Coxeter element.
\end{remark}

\subsubsection{Degrees and coexponents}
\label{sec:degrees and coexponents}

To each complex reflection group there are associated fundamental invariants of several kinds, two of which will appear below (particularly in Section~\ref{sec:exceptional}).  We define them now.

Much interest in complex reflection groups relates to their role in
invariant theory: the complex reflection groups of rank $n$ are
exactly the groups $G$ whose invariant ring $\CC[x_1, \ldots, x_n]^G$
is again a polynomial ring $\CC[f_1, \ldots, f_n]$, generated by $n$
algebraically independent homogeneous polynomials. (For example, the invariant ring $\CC[x_1,
\ldots, x_n]^{S_n}$ of the symmetric group is the ring of symmetric
polynomials in $n$ variables, generated over $\CC$ by the elementary
symmetric polynomials $\{e_1, \ldots, e_n\}$.) The basic
invariants $f_1$, \ldots, $f_n$ are not uniquely determined, but their
degrees $d_1 \leq d_2 \leq \dots \leq d_n$ are, and we call these the
\emph{degrees of $G$}.   For a well generated group, it is always the
case that $d_n$ is equal to the Coxeter number $h$ mentioned above.
For $G(d, 1, n)$, the degrees are given by $d_i = d \cdot i$, while for $G(d, d, n)$ they are given by $\{d_1, \ldots, d_n\} = \{d, 2d, \ldots, (n - 1)d\} \cup \{n\}$.  For any complex reflection group $G$, one has $d_1 \cdots d_n = |G|$ and, more generally \cite[5.3]{ShephardTodd},
\[
\sum_{w \in G} x^{\dim \fix(w)} = \prod_{i = 1}^n (x - 1 + d_i).
\]

A second sequence of invariants, the \emph{coexponents} of $G$, will
also appear below.  These may be equivalently defined in several ways:
in terms of invariant theory, they are degrees of generators appearing in the covariant space $(\CC[V] \otimes V)^G$; for a definition in terms of the hyperplane arrangement associated to $G$, see Remark~\ref{Orlik--Terao remark}.  
Perhaps the simplest definition is that the coexponents $e^*_1 \leq
e^*_2 \leq \ldots \leq e^*_n$ are the positive integers that satisfy
the identity \cite[(3.10)]{OrlikSolomon1980}
\[
\sum_{w \in G} \det(w) \cdot x^{\dim \fix(w)} = \prod_{i = 1}^n (x - e^*_i).
\]
One consequence of this formula is that the sum $\sum_{i = 1}^n e^*_i$ is equal to the number $|R^*|$ of reflecting hyperplanes of reflections in $G$.
For $G(d, 1, n)$, the coexponents are given by $e^*_i = 1 + d\cdot (i - 1)$, while for $G(d, d, n)$ they are given by $\{e^*_1, \ldots, e^*_n\} = \{1, d + 1, \ldots, (n - 2)d + 1\} \cup \{(n - 1)(d - 1)\}$.

\subsection{Counting factorizations with representation theory}
\label{sec:Frobenius}

The character-theory approach to counting factorizations is based on the following lemma, expositions of which appear in numerous sources, e.g., \cite[Ex.~7.67(b)]{EC2}.

\begin{lemma}[{{Frobenius \cite{Frobenius}}}]
\label{frobenius}
Let $G$ be a finite group, $g$ an element of $G$, and $A_1, \ldots, A_k$ subsets of $G$ that are closed under conjugacy by $G$.  Then the number of factorizations of $g$ as a product $g = u_1 \cdots u_k$ such that for each $i$ the factor $u_i$ is required to lie in the set $A_i$ is equal to
\[
\frac{1}{|G|} \sum_{\lambda \in \Irr(G)} \dim(\lambda) \chi_\lambda(g^{-1}) \normchi_\lambda(\zz_1) \cdots \normchi_{\lambda}(\zz_k),
\]
where $\Irr(G)$ is the set of irreducible complex representations of
$G$, $\dim(\lambda)$ is the dimension of the representation $\lambda$,
$\chi_\lambda$ is the character associated to $\lambda$ extended
linearly from the group to the group algebra, $\normchi_\lambda = \frac{\chi_\lambda}{\dim(\lambda)}$ is the normalized character associated to $\lambda$, and for $i = 1, \ldots, k$, $\zz_i$ is the formal sum in the group algebra of elements in $A_i$.
\end{lemma}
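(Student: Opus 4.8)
The plan is to interpret the desired count as a coefficient in the group algebra $\CC[G]$ and then to extract that coefficient using character theory. Writing $\zz_i = \sum_{u \in A_i} u$, the product $\zz_1 \cdots \zz_k$ expands as $\sum_{h \in G} N_h\, h$, where $N_h$ is exactly the number of factorizations $h = u_1 \cdots u_k$ with each $u_i \in A_i$; thus the quantity we want is the coefficient $N_g$ of $g$ in $\zz_1 \cdots \zz_k$. First I would record the structural fact that, because each $A_i$ is closed under conjugation, each $\zz_i$ lies in the center $Z(\CC[G])$ of the group algebra.

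To extract the coefficient $N_g$, I would use the regular representation: its character $\chi_{\mathrm{reg}}$ satisfies $\chi_{\mathrm{reg}}(h) = |G|$ when $h$ is the identity and $\chi_{\mathrm{reg}}(h) = 0$ otherwise, so for any $x = \sum_h x_h h$ one has $x_g = \frac{1}{|G|}\chi_{\mathrm{reg}}(x g^{-1})$. Applying this with $x = \zz_1 \cdots \zz_k$ gives $N_g = \frac{1}{|G|}\chi_{\mathrm{reg}}(\zz_1 \cdots \zz_k\, g^{-1})$. Decomposing the regular representation into irreducibles, $\chi_{\mathrm{reg}} = \sum_{\lambda \in \Irr(G)} \dim(\lambda)\,\chi_\lambda$, then reduces the problem to evaluating each term $\chi_\lambda(\zz_1 \cdots \zz_k\, g^{-1})$.

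The heart of the argument, and the step I expect to require the most care, is Schur's lemma applied to the central elements $\zz_i$. Since $\zz_i$ is central, it acts on the irreducible module $\lambda$ as a scalar; computing the trace of that scalar operator and dividing by $\dim(\lambda)$ identifies the scalar as $\normchi_\lambda(\zz_i) = \chi_\lambda(\zz_i)/\dim(\lambda)$. Consequently $\zz_1 \cdots \zz_k$ acts on $\lambda$ as multiplication by $\normchi_\lambda(\zz_1)\cdots\normchi_\lambda(\zz_k)$, whence $\chi_\lambda(\zz_1 \cdots \zz_k\, g^{-1}) = \normchi_\lambda(\zz_1)\cdots\normchi_\lambda(\zz_k)\,\chi_\lambda(g^{-1})$. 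Substituting this into the sum over $\Irr(G)$ yields the claimed formula. The only real subtlety is bookkeeping between normalized and unnormalized characters: the factor $\dim(\lambda)$ coming from the regular-representation multiplicity remains outside, while the scalars contributed by the central class sums are precisely the normalized characters $\normchi_\lambda(\zz_i)$, and one must be careful not to double-count the dimension factors.
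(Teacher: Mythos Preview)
Your argument is correct and is the standard proof of the Frobenius formula. Note that the paper does not actually prove this lemma: it is stated with attribution to Frobenius and a pointer to expositions such as \cite[Ex.~7.67(b)]{EC2}, and is then used as a black box throughout. Your write-up via the regular representation and Schur's lemma is exactly the argument one finds in those references, so there is nothing to compare against here beyond confirming that you have reproduced the classical proof cleanly.
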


Thus, counting factorizations with no transitivity conditions can be reduced to
  a problem of being able to compute enough of the
  character table of the group under consideration.  This is precisely
  the approach followed by Jackson and by Chapuy--Stump, using
  respectively the character theory of the symmetric group and of complex reflection
  groups. We also consider factorizations with a transitivity condition. 
  Their counts can be written as a difference of two numbers of factorizations without
  transitivity conditions, where the character approach can be
  used. We make use of this character approach in the next subsection,
  as well as in the case of the exceptional complex reflection groups
  (Section~\ref{sec:exceptional}). 

\subsection{Factoring an $(n - 1)$-cycle in $\mathfrak{S}_n$}
\label{sec:n - 1 cycle}

If one factors an $(n - 1)$-cycle $\cnn$ in $\Symm_n$ as a product of
other permutations, there are two possibilities: either every factor
shares a fixed point with $\cnn$, or not.  The factorizations in the
former case are in natural bijection with factorizations of an
$(n-1)$-cycle in $\Symm_{n - 1}$.  The factorizations in the latter
case have a more elegant description: they are exactly the
factorizations in which the factors act transitively on the set
$[n]$. The study of transitive
factorizations is present already in the work of Hurwitz \cite{H}
from the late 19th century and plays an important role in the study of permutation
factorizations; see \cite{GJ-stanleyvol} for a recent survey.  
Our first result is to enumerate transitive factorizations of the $(n - 1)$-cycle.

\begin{theorem}
\label{S_n (n-1) cycle theorem k factors}
Let $\cnn$ be a fixed $(n-1)$-cycle in $\mathfrak{S}_n$. For integers
$r_1,\ldots,r_k$, let $b_{r_1,\ldots,r_k}$ be the number of $k$-tuples
$(\pi_1,\ldots,\pi_k)$ of elements in $\Symm_n$ such that $\pi_i$ has $r_i$
cycles for $i=1,\ldots,k$, $\pi_1 \cdots  \pi_k = \cnn$, and the tuple is a transitive factorization. Then 
\begin{align}
\label{n - 1 cycle k factors transitive factorizations}
\sum_{r_1,\ldots,r_k \geq 1} b_{r_1,
  \ldots, r_k} x_1^{r_1} \cdots x_k^{r_k} &= \frac{(n!)^{k - 1}}{n^k}\sum_{p_1,\ldots,p_k \geq
  1}  M^{n}_{p_1,\ldots,p_k} \frac{(x_1)_{p_1}}{(p_1-1)!} \cdots
  \frac{(x_k)_{p_k}}{(p_k-1)!},
\end{align}
where $M^n_{p_1,\ldots,p_k}$ is defined in \eqref{definition M coeff}.
\end{theorem}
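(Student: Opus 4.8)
The plan is to split factorizations of $\cnn$ according to transitivity, evaluate the two resulting counts without a transitivity constraint, and subtract. Write $B(\xx):=\sum_{\rr}b_{\rr}\,x_1^{r_1}\cdots x_k^{r_k}$ for the transitive generating function we want, and let $\widetilde{A}(\xx)$ be the generating function that counts \emph{all} $k$-tuples $(\pi_1,\dots,\pi_k)$ with $\pi_1\cdots\pi_k=\cnn$, graded by the number of cycles of each $\pi_i$ in $\Symm_n$. As recalled just before the statement, such a factorization is transitive unless every factor fixes the point fixed by $\cnn$ (say $n$); and in that exceptional case, deleting $n$ produces a factorization of an $(n-1)$-cycle in $\Symm_{n-1}$ in which each factor has lost exactly one cycle, namely the singleton $(n)$. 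This gives
\[
\widetilde{A}(\xx)=B(\xx)+x_1\cdots x_k\,A_{n-1}(\xx),
\]
where $A_{n-1}(\xx)$ is the cycle generating function for factorizations of an $(n-1)$-cycle in $\Symm_{n-1}$. I would read $A_{n-1}$ directly off Theorem~\ref{S_n theorem k factors} with $n$ replaced by $n-1$, reducing everything to the computation of $\widetilde A$.

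To compute $\widetilde A$, I would invoke the Frobenius Lemma~\ref{frobenius}, taking each $A_i$ to be a single conjugacy class and summing over classes with weight $x_i^{\operatorname{cyc}(\sigma)}$; by linearity of $\normchi_\lambda$ this is the same as evaluating the normalized characters on the weighted sum $\zz_i:=\sum_{\sigma\in\Symm_n}x_i^{\operatorname{cyc}(\sigma)}\sigma$. The key input is the classical content evaluation
\[
\normchi_\lambda(\zz_i)=\prod_{\square\in\lambda}\bigl(x_i+\operatorname{ct}(\square)\bigr),
\]
where $\operatorname{ct}(\square)=b-a$ is the content of a cell in row $a$ and column $b$ (this is the scalar by which the central element $\zz_i$ acts on the irreducible $\lambda$, checkable on the trivial and sign characters or via Jucys--Murphy elements). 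Since $\Symm_n$-characters are real, $\chi_\lambda(\cnn^{-1})=\chi_\lambda(\cnn)$, and Lemma~\ref{frobenius} yields
\[
\widetilde{A}(\xx)=\frac1{n!}\sum_{\lambda\vdash n}\chi_\lambda(\cnn)\,\dim(\lambda)\prod_{i=1}^{k}\prod_{\square\in\lambda}\bigl(x_i+\operatorname{ct}(\square)\bigr).
\]

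Next I would determine the support and values of $\chi_\lambda(\cnn)$ on the cycle type $(n-1,1)$. Viewing $\cnn$ as an $(n-1)$-cycle inside $\Symm_{n-1}$ and applying the branching rule (equivalently, one step of Murnaghan--Nakayama), one gets $\chi_\lambda(\cnn)=\sum_{\mu}(-1)^{\operatorname{leg}(\mu)}$, summed over hooks $\mu$ of size $n-1$ obtained from $\lambda$ by removing a single box. A short analysis shows that the proper hooks cancel in pairs, so the only surviving shapes are the two full hooks $(n)$ and $(1^n)$, contributing $+1$ and $(-1)^n$, and the near-hooks $(a,2,1^{b})$ with $a\ge 2$ and $a+b=n-2$, each contributing $(-1)^{b+1}$. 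Substituting these finitely many shapes, together with their dimensions and content products, turns $\widetilde A$ into an explicit sum.

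The main obstacle is the final identification of $\widetilde{A}(\xx)-x_1\cdots x_k\,A_{n-1}(\xx)$ with the claimed closed form. The difficulty is structural and is already present in Jackson's $k$-factor theorem: the character sum pairs a single dimension $\dim(\lambda)$ against $k$ content products, so it does not factor over the $x_i$ once $k\ge2$. Rather than resum it from scratch, I would expand all three generating functions in the basis $\prod_i\binom{x_i}{p_i}$, where, by Proposition~\ref{prop:change of basis}, the coefficients are counts of colored factorizations. In this basis the theorem is equivalent (using $\frac{(x)_p}{(p-1)!}=p\binom{x}{p}$) to the statement that the number $D_{\pp}$ of colored transitive factorizations of $\cnn$ using exactly $p_i$ colors on the $i$th factor equals $\frac{(n!)^{k-1}}{n^k}\,(p_1\cdots p_k)\,M^n_{\pp}$; applying the recurrence of Proposition~\ref{prop:recurrence Ms} to $M^n_{\pp}$ and Corollary~\ref{S_n corollary} to recognize each $M^{n-1}_{\pp-{\bf 1}+\ee_T}$ as a colored $n$-cycle count $C^{\langle n\rangle}_{\pp+\ee_T}$, this is in turn equivalent to the clean identity
\[
D_{\pp}=\frac{p_1\cdots p_k}{n^k}\sum_{\varnothing\neq T\subseteq[k]}C^{\langle n\rangle}_{\pp+\ee_T}.
\]
Verifying this last identity---either by the coefficient bookkeeping just described or, more transparently, by a direct bijection relating colored transitive factorizations of $\cnn$ to colored factorizations of the $n$-cycle---is the crux of the argument and the step I expect to require the most care.
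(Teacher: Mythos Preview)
Your framework coincides with the paper's proof: same transitive/non-transitive split, same Frobenius setup, same Murnaghan--Nakayama analysis isolating $(n)$, $(1^n)$, and the near-hooks $\langle n-m-1,2,1^{m-1}\rangle$, and the same content-product evaluation $g_\lambda(x)=\prod_{\square\in\lambda}(x+\operatorname{ct}(\square))$. So far there is nothing to fault.

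The gap is that you stop precisely where the work begins. Your ``clean identity'' $D_\pp=\frac{p_1\cdots p_k}{n^k}\sum_{\varnothing\neq T\subseteq[k]}C^{\langle n\rangle}_{\pp+\ee_T}$ is not a lemma you have proved; it is a restatement of the theorem obtained by applying Corollary~\ref{S_n corollary} and Proposition~\ref{prop:recurrence Ms} to the \emph{claimed} right-hand side. At this point you still need an independent computation of either $\widetilde A(\xx)$ or $D_\pp$. The paper supplies exactly the missing ingredient: the identity
\[
g_{\langle n-m-1,2,1^{m-1}\rangle}(x)=x\cdot g_{\langle n-m-1,1^{m}\rangle}(x),
\]
proved via the hook-content formula. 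This collapses the near-hook contribution in $\widetilde A(\xx)$ against $x_1\cdots x_k\,A_{n-1}(\xx)$ term by term, after which expanding each surviving $g_{\hook{m}{n}}(x_i)$ in the basis $\binom{x_i}{p_i}$ (Lemma in the paper) produces the alternating sum defining $M^n_{\pp}$ directly. Without this relation, your ``coefficient bookkeeping'' has no mechanism for cancellation and you are left with a raw sum over near-hooks that you have not evaluated.

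As for your alternative route, a direct bijection proving $D_\pp=\frac{p_1\cdots p_k}{n^k}\sum_{T}C^{\langle n\rangle}_{\pp+\ee_T}$ for general $k$: the paper explicitly poses this as an open question and supplies a bijection only for $k=2$. So that path is not a shortcut.
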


\begin{remark}
We were surprised not to find this statement in the literature. We
give an algebraic proof.  In Section~\ref{S_n questions}, we give
a combinatorial proof in the case of $k = 2$ factors; it would be
of interest to find a combinatorial proof for all $k$.
\end{remark}

\begin{remark}
\label{rem:(n - 1)-cycle}
Theorem~\ref{S_n (n-1) cycle theorem k factors} can be interpreted as
a statement about colored factorizations: the LHS counts
transitive factorizations of $\cnn$ in which the cycles of factor $i$
are colored with any of $x_i$ colors, and the coefficient $C^{\langle
  n - 1, 1 \rangle}_{p_1, \ldots, p_k} := (n!)^{k - 1}\frac{p_1 \cdots
  p_k}{n^k} M^n_{p_1, \ldots, p_k}$ of $\binom{x_1}{p_1} \cdots
\binom{x_k}{p_k}$ on the RHS is the number of these factorizations in which a
prescribed set of $p_i$ colors is used in the $i$th factor. 
\end{remark}

In the proof of Theorem~\ref{S_n (n-1) cycle theorem k factors}, we assume a familiarity with symmetric
functions as in \cite[Ch.~7]{EC2}.  Let $p_{\lambda}$, $s_{\lambda}$ and $m_{\lambda}$ denote
the power sum, Schur and monomial symmetric
functions in the variables $\xx = \{x_1, x_2, \ldots\}$. We will use the {\em stable principal
  specializations} $\xx \mapsto 1^x$ of these functions, setting $x$ of the variables $\{x_i\}$ equal to $1$ and all others equal to $0$.  One has
\begin{equation} \label{eq: stable specialization}
p_{\lambda}(1^x) = x^{\ell(\lambda)}, \qquad s_{\lambda}(1^x) =
   \prod_{(i,j) \in \lambda} \frac{x+j-i}{h(i,j)}, \qquad \textrm{and} \qquad m_{\lambda}(1^x)
 = \binom{x}{\ell(\lambda)}
\frac{\ell(\lambda)!}{\Aut(\lambda)},
\end{equation}
where $h(i,j) :=\lambda_i -i
     +\lambda'_j-j+1$ is the {\em hook length} of the cell $(i,j)$ in
     the Young diagram of
     $\lambda$ and $\Aut(\lambda):=n_1!n_2!\cdots$ for $\lambda=\langle 1^{n_1},2^{n_2},\ldots \rangle$.  The middle specialization is the \emph{hook content formula}.

\begin{proof}[Proof of Theorem~\ref{S_n (n-1) cycle theorem k factors}] 
Let $\cnn$ be the fixed $(n-1)$-cycle $(1,2,\ldots,n-1)(n)$ and
let $d_{r_1,\ldots,r_k}$ be the number of $k$-tuples
$(\pi_1,\ldots,\pi_k)$ of elements in
$\mathfrak{S}_n$ such that $\pi_i$ has $r_i$ cycles for $i=1,\ldots,k$
 and $\pi_1\dots \pi_k = \cnn$. As mentioned above, the difference $c_{r_1,\ldots,r_k}:=d_{r_1,\ldots,r_k}-b_{r_1,\ldots,r_k}$ is the number of $k$-tuples $(\pi_1,\ldots,\pi_k)$ of
elements in $\mathfrak{S}_n$ such that $\pi_i$ has $r_i$ cycles,
$\pi_1\cdots \pi_k = \cnn$, and $n$ is a fixed point of each $\pi_i$. Thus
$c_{r_1,\ldots,r_k} = a_{r_1-1,\ldots,r_k-1}$ is the number of
factorizations of an $(n-1)$-cycle $\cn_{(n-1)}$ as a product of $k$ permutations in
$\mathfrak{S}_{n-1}$ such that the $i$th factor has $r_i - 1$ cycles. 

Since the set of permutations with prescribed number of cycles is closed
under conjugation, we apply the Frobenius formula  (Lemma~\ref{frobenius}) to compute
both $d_{r_1,\ldots,r_k}$ and $c_{r_1,\ldots,r_k}$:
\begin{align*}
d_{r_1,\ldots,r_k} &= \frac{1}{n!} \sum_{\lambda \in
                     \Irr(\mathfrak{S}_n)} \dim(\lambda)
                     \chi_\lambda(\cnn^{-1}) \normchi_\lambda(\zz_{r_1})
                     \cdots \normchi_{\lambda}(\zz_{r_k}),\\
c_{r_1,\ldots,r_k} &= \frac{1}{(n-1)!} \sum_{\lambda \in
                     \Irr(\mathfrak{S}_{n-1})} \dim(\lambda)
                     \chi_\lambda(\cn_{(n-1)}^{-1}) \normchi_\lambda(\zz_{r_1-1})
                     \cdots \normchi_{\lambda}(\zz_{r_k-1}),
\end{align*}
where $\zz_r$ is the formal sum in the group algebra of the symmetric
group (of size $n$ or size $n-1$ depending on the context) of all elements with $r$
cycles. Thus the generating functions $G({\bf x}):= 
\sum_{r_1,\ldots,r_k} d_{r_1,\ldots,r_k} x_1^{r_1}\cdots x_k^{r_k}$
and $G'({\bf x}) := \sum_{r_1,\ldots,r_k} c_{r_1,\ldots,r_k}
x_1^{r_1}\cdots x_k^{r_k}$ are given by
\begin{align}
G({\bf x}) &= \frac{1}{n!} \sum_{\lambda \in
                     \Irr(\mathfrak{S}_n)} \dim(\lambda)
                     \chi_\lambda(\cnn^{-1}) g_{\lambda}(x_1)\cdots
  g_{\lambda}(x_k),  \label{eq: gf all n-1 cycle factorizations}\\
G'({\bf x}) &= \frac{x_1\cdots x_k}{(n-1)!}
\sum_{\lambda \in
                     \Irr(\mathfrak{S}_{n-1})} \dim(\lambda)
                     \chi_\lambda(\cn_{(n-1)}^{-1}) g_{\lambda}(x_1)
  \cdots g_{\lambda}(x_k) \label{eq: gf all non transitive n-1 cycle factorizations},
\end{align}
where 
\[
g_{\lambda}(x) := \sum_{k=1}^n \normchi_{\lambda}(\zz_k) x^k. 
\]
By the Murnaghan--Nakayama rule \cite[Thm.~7.17.1]{EC2}, one has 
$\chi_\lambda(\cnn^{-1})=0$  unless $\lambda$ equals $\langle n\rangle$, $\langle 1^n\rangle$,
or one of the {\em near hooks} $\langle n-m-1,2,1^{m-1} \rangle$. These representations
have dimensions $\dim(\lambda)=1, 1$, and $\frac{(n-2-m)m}{(n-1)!}
\binom{n-2}{m}$  and character values $\chi_{\lambda}(\cnn^{-1}) = 1,
(-1)^n$, and $(-1)^m$, respectively.
Similarly,
$\chi_{\lambda}(\cn_{(n-1)}^{-1})=0$ unless $\lambda$ equals a hook
$\langle n-1-m,1^m\rangle$. These representations have dimension $\dim(\lambda)=\binom{n-2}{m}$
and character values $\chi_{\lambda}(\cn^{-1}_{(n-1)})=(-1)^m$. Thus
\eqref{eq: gf all n-1 cycle factorizations} and \eqref{eq: gf all non
  transitive n-1 cycle factorizations} become
\begin{align}
\label{eq:JacksonSn-1}
G({\bf x}) &=
                                                                     \frac{1}{n!}\Bigg(
                                                                     g_{\langle n\rangle}(x_1)\cdots 
  g_{\langle n\rangle}(x_k) + (-1)^n g_{\langle 1^n\rangle}(x_1)\cdots g_{\langle 1^n\rangle}(x_k) \\
 & + \sum_{m=1}^{n-3}
  \frac{(n-2-m)m}{n-1} \binom{n}{m+1} (-1)^m g_{\langle n - m - 1,2,
   1^{m-1}\rangle}(x_1)\cdots  g_{\langle n - m - 1,2,
   1^{m-1}\rangle}(x_k) \Bigg), \notag \\
G'({\bf x}) &= \frac{x_1\cdots x_k}{(n-1)!}
\sum_{m=0}^{n-2} (-1)^m \binom{n-2}{m} g_{\langle n-m-1,1^m\rangle}(x_1)\cdots
            g_{\langle n-m-1,1^m\rangle}(x_k). \label{eq: gf all non
  transitive n-1 cycle factorizations simplified} 
\end{align}
The next lemma evaluates the $g_{\lambda}(x)$ when $\lambda$ is a hook or near hook.

\begin{lemma} \label{lemm:Sncasehooks}
We have that
\begin{align}
\label{eq:hooks}  g_{\hook{m}{n}}(x) &= (x-m)(x-m+1)\cdots (x-m+n-1) 
     = n! \sum_{k=m+1}^n \binom{n-1-m}{k-1-m} \cdot \binom{x}{k}, \\
g_{\langle n-m-1,2,1^{m-1}\rangle }(x)  &= x \cdot g_{\hook{m}{n-1}}(x) \label{eq:near-hooks}.
\end{align}
\end{lemma}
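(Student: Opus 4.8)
The plan is to evaluate $g_\lambda(x)$ for an arbitrary $\lambda \vdash n$ in closed form and then specialize to hooks and near-hooks. Writing out the definition and grouping permutations by cycle type $\mu$, one gets
\[
g_\lambda(x) = \frac{1}{\dim\lambda}\sum_{\sigma \in \Symm_n}\chi_\lambda(\sigma)\,x^{c(\sigma)} = \frac{n!}{\dim\lambda}\sum_{\mu \vdash n}\frac{\chi_\lambda(\mu)}{z_\mu}\,x^{\ell(\mu)},
\]
where $c(\sigma)$ is the number of cycles of $\sigma$ and $z_\mu$ is the order of the centralizer of a permutation of cycle type $\mu$. Since $p_\mu(1^x) = x^{\ell(\mu)}$ and the Frobenius characteristic gives $s_\lambda = \sum_\mu z_\mu^{-1}\chi_\lambda(\mu)\,p_\mu$, the inner sum is exactly the stable principal specialization $s_\lambda(1^x)$. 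Thus $g_\lambda(x) = \tfrac{n!}{\dim\lambda}\,s_\lambda(1^x)$, and combining the hook content formula from \eqref{eq: stable specialization} with the hook length formula $\dim\lambda = n!/\prod_{(i,j)\in\lambda}h(i,j)$ collapses the hook lengths and yields
\[
g_\lambda(x) = \prod_{(i,j)\in\lambda}\bigl(x + j - i\bigr),
\]
a product of $(x + \text{content})$ over the cells of $\lambda$. This identity is the main step; once it is in hand, both parts of the lemma are content bookkeeping together with one classical binomial identity.

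For the hook $\lambda = \hook{m}{n} = \langle n-m, 1^m\rangle$, the cells of the first row have contents $0, 1, \ldots, n-m-1$ while the $m$ cells of the leg have contents $-1, -2, \ldots, -m$. Hence the contents run consecutively from $-m$ to $n-m-1$, and the product formula above gives $g_{\hook{m}{n}}(x) = (x-m)(x-m+1)\cdots(x-m+n-1)$, the first equality of \eqref{eq:hooks}. I would then rewrite this product as $n!\binom{x+n-m-1}{n}$ and apply the Vandermonde addition formula $\binom{x+n-m-1}{n} = \sum_k \binom{x}{k}\binom{n-m-1}{n-k}$; using $\binom{n-m-1}{n-k} = \binom{n-1-m}{k-1-m}$ (which vanishes unless $m+1 \le k \le n$) produces the binomial form, the second equality of \eqref{eq:hooks}.

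For the near-hook $\langle n-m-1, 2, 1^{m-1}\rangle$, I would again read off the contents: the first row contributes $0, 1, \ldots, n-m-2$, the second row contributes $-1$ and $0$, and the remaining $m-1$ leg cells contribute $-2, \ldots, -m$. Comparing with the hook $\hook{m}{n-1} = \langle n-1-m, 1^m\rangle$, whose contents are $0, \ldots, n-m-2$ (row) and $-1, \ldots, -m$ (leg), the two content multisets agree except that the near-hook has one additional cell of content $0$. By the product formula this extra cell contributes precisely a factor of $x$, giving $g_{\langle n-m-1,2,1^{m-1}\rangle}(x) = x\cdot g_{\hook{m}{n-1}}(x)$, which is \eqref{eq:near-hooks}.

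The only genuine obstacle is establishing the closed form $g_\lambda(x) = \tfrac{n!}{\dim\lambda}s_\lambda(1^x)$, i.e., recognizing the cycle-count character sum as a power-sum specialization of a Schur function; after that the computations are routine, and one need only check that the degenerate ranges of $m$ (for instance $m = 0$ or $m = n-1$ for the hook, and the requirement $m \ge 1$ for the near-hook to be a genuine partition) do not disrupt the content bookkeeping.
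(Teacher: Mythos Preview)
Your proof is correct and follows essentially the same route as the paper: both identify $g_\lambda(x)$ with $H_\lambda \cdot s_\lambda(1^x) = \tfrac{n!}{\dim\lambda}\,s_\lambda(1^x)$ via the power-sum expansion of $s_\lambda$ and then invoke the hook-content formula, and both finish the hook case with Chu--Vandermonde. Your explicit content-multiset comparison for the near-hook (spotting the single extra cell of content $0$) is a slightly cleaner way to see \eqref{eq:near-hooks} than the paper's direct computation, but the underlying argument is the same.
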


\begin{proof}
By the stable principal specialization \eqref{eq: stable specialization} of $p_\lambda$, the expansion of a Schur function into
power sum symmetric functions, and the hook length formula for $\dim(\lambda)$, we have that $g_{\lambda}(x)$ equals a stable
principal specialization of a Schur function scaled by the product $H_{\lambda}$ of
the hook lengths of $\lambda$. That is,
\begin{equation} \label{eq: g is spe of schurs}
g_{\lambda}(x) = \sum_{\mu} \frac{n!}{z_{\mu}} \cdot
\normchi_{\lambda}(\mu)\cdot 
p_{\mu}(1^x) = H_{\lambda} \cdot s_{\lambda}(1^x).
\end{equation}
The first equality of \eqref{eq:hooks} then follows by
applying the stable principal specialization \eqref{eq: stable specialization} of Schur functions for $\lambda=\hook{m}{n}$. 
The second equality of \eqref{eq:hooks} is
obtained by using the Chu--Vandermonde identity.\footnote{Alternatively, one is expanding $H_{\hook{m}{n}}\cdot s_{\hook{m}{n}}$ into the monomial
basis and doing a stable principle specialization.}
Next we consider \eqref{eq:near-hooks}. By \eqref{eq: g is spe of schurs} we have that $g_{\langle n-m-1,2,1^{m-1}\rangle}(x)$
is the stable principal specialization of $H_{\langle n-m-1,2,1^{m-1}\rangle} \cdot
s_{\langle n-m-1,2,1^{m-1}\rangle}$.  By the hook-content formula and the first
equality of \eqref{eq:hooks} we obtain 
\begin{align*}
g_{\langle n-m-1,2,1^{m-1}\rangle}(x) &=  H_{\langle n-m-1,2,1^{m-1}\rangle} \cdot
s_{\langle n-m-1,2,1^{m-1}\rangle}(1^x) \\
&= x \cdot (x-m)(x-m+1)\cdots
                                       (x-m+n-2)\\
&= x \cdot g_{\hook{m}{n-1}}(x). \qedhere
\end{align*}
\end{proof}

We continue with the proof of Theorem~\ref{S_n (n-1) cycle theorem k factors}. Let $\widetilde{G}({\bf
  x}) := \sum_{r_1,\ldots,r_k}
  b_{r_1,\ldots,r_k} x_1^{r_1} \cdots x_k^{r_k}$. Since
  $\widetilde{G}({\bf x})$ is the difference of $G({\bf x})$ and
  $G'({\bf x})$, using \eqref{eq:JacksonSn-1}, \eqref{eq: gf all non
  transitive n-1 cycle factorizations simplified}  with
  \eqref{eq:near-hooks} 
  gives
\begin{multline*}
\widetilde{G}({\bf x}) 
  =   G({\bf x}) - G'({\bf x}) 
= \frac{1}{n!}\Bigg( g_{\langle n\rangle}(x_1)\cdots 
  g_{\langle n\rangle}(x_k) + (-1)^n g_{\langle 1^n\rangle}(x_1)\cdots g_{\langle 1^n\rangle}(x_k) \\
  - x_1\cdots x_k\sum_{m=0}^{n-2}
 \binom{n}{m+1}(-1)^m g_{\langle n - m - 1, 1^{m}\rangle}(x_1)\cdots  g_{\langle n - m - 1, 1^{m}\rangle}(x_k) \Bigg).
\end{multline*}
We rewrite the expression in terms of the basis $(x_1)_{p_1}\cdots (x_k)_{p_k}$.  In the case 
$p_1=\cdots=p_k=n$, we see directly from the definition of $b_{\pp}$ that $b_{n,n,\ldots,n}=0$, and so
\[
\left[\frac{(x_1)_n}{n!}\cdots \frac{(x_k)_n}{n!}\right]
  \,\,\widetilde{G}({\bf x}) \, = \, 0.
\]
For any other tuple $p_1,\ldots,p_k\geq 1$ we use
\eqref{eq:hooks} 
and obtain
\begin{multline*}
\left[\frac{(x_1)_{p_1}}{p_1!}\cdots \frac{(x_k)_{p_k}}{p_k!}\right]
  \,\,\widetilde{G}({\bf x})  = \\ (n!)^{k-1}
                                        \binom{n-1}{p_1-1}\cdots \binom{n-1}{p_k-1} + 
\frac{(n-1)!^k}{n!}   p_1\cdots p_k\sum_{m=0}^{n-2}
                                        (-1)^{m+1}\binom{n}{m+1}
                                        \binom{n-1-m}{p_1-1-m}\cdots
                                        \binom{n-1-m}{p_k-1-m}.
\end{multline*}
The first term on the RHS corresponds to the term $m=-1$ in
the sum. We absorb this term to the sum and reindex it using $t=m+1$
to obtain
\begin{align*}
\left[\frac{(x_1)_{p_1}}{p_1!}\cdots \frac{(x_k)_{p_k}}{p_k!}\right]
  \,\,\widetilde{G}({\bf x}) & =
\frac{(n-1)!^k}{n!}   p_1\cdots p_k\sum_{t=0}^{n-1}
                                        (-1)^{t}\binom{n}{t}
                                        \binom{n-t}{p_1-t}\cdots
                                        \binom{n-t}{p_k-t} \\
& = \frac{(n-1)!^k}{n!}   p_1\cdots p_k M^n_{p_1,\ldots,p_k},
\end{align*} 
where $M^n_{p_1,\ldots,p_k}$ is 
defined in \eqref{definition M coeff}. This yields the desired result.
\end{proof}

\section{Factorization results for the group $G(d,1,n)$}
\label{sec:G(d, 1, n)}

We recall the statement our main enumerative theorem  for $G(d, 1, n)$.

\begin{theorem}
\label{G(d, 1, n) many factors theorem}
For $d > 1$, let $G = G(d, 1, n)$, let $c$ be a fixed Coxeter element in $G$, and let $a^{(d)}_{r_1,\ldots,r_k}$ be the number of factorizations of $c$ as a product of $k$ elements of $G$ with fixed space dimensions $r_1,\ldots,r_k$,
respectively.  Then
\[
 \sum_{r_1,\ldots,r_k}
a^{(d)}_{r_1,\ldots,r_k} x_1^{r_1} \cdots x_k^{r_k} = 
|G|^{k-1} \cdot \sum_{p_1,\ldots,p_k \geq 0}
M^n_{p_1,\ldots,p_k} \frac{(x_1 - 1)^{(d)}_{p_1}}{d^{p_1}p_1!} \cdots \frac{(x_k - 1)^{(d)}_{p_k}}{d^{p_k}p_k!},
 \]
where the polynomial $(x - 1)^{(d)}_p$ is defined in \eqref{first basis} and $M^n_{p_1,\ldots,p_k}$ is defined in \eqref{definition M coeff}.
\end{theorem}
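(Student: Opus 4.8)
The plan is to reduce to the symmetric group by projecting onto underlying permutations and then enumerating the weight data. Every factorization $c = u_1\cdots u_k$ in $G$ projects, under $G(d,1,n)\twoheadrightarrow\Symm_n$, to a factorization $(12\cdots n) = \pi_1\cdots\pi_k$ of the $n$-cycle, where $u_i = [\pi_i; a^{(i)}]$. Since the fixed space dimension of $u_i$ equals its number of weight-$0$ cycles (Proposition~\ref{prop:reflection length}), I would group the lifts of a fixed underlying factorization by their \emph{cycle-weight profile} --- the tuple recording the weight in $\ZZ/d\ZZ$ of each cycle of each $\pi_i$ --- and track, for each factor, the number $r_i$ of cycles of weight $0$.

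First I would prove the key counting lemma: for a fixed underlying factorization in which $\pi_i$ has $m_i$ cycles, every cycle-weight profile whose weights sum to $1$ (the weight of $c$) is realized by exactly $d^{(k-1)n - M + 1}$ lifts, where $M := \sum_i m_i$. This is linear algebra over $\ZZ/d\ZZ$: the condition $u_1\cdots u_k = c$ reads $\sum_i \rho_i\cdot a^{(i)} = (0,\ldots,0,1)$ with $\rho_i = \pi_{i+1}\cdots\pi_k$, so the map $L$ sending $(a^{(1)},\ldots,a^{(k)})$ to the pair (weight vector of the product, cycle-weight profile) is $\ZZ/d\ZZ$-linear. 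Its image visibly lies in the hyperplane where the total weight of the product vector equals the sum of all cycle weights; the content is that the image is \emph{all} of this hyperplane. By a dimension count this is equivalent to $\sum_i \rho_i\cdot Z_i = H_0$, where $Z_i$ is the space of weight vectors all of whose $\pi_i$-cycle weights vanish and $H_0$ is the total-weight-$0$ hyperplane.

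\textbf{I expect this surjectivity to be the main obstacle}, and I would prove it by a connectivity argument. Since $Z_i$ is spanned by the differences $e_a - e_{\pi_i(a)}$, the sum $\sum_i\rho_i\cdot Z_i$ equals $H_0$ precisely when the graph on $[n]$ carrying an edge $\{\rho_i^{-1}(a),\,\rho_i^{-1}\pi_i(a)\}$ for each $i$ and $a$ is connected. Writing $\sigma_i := \rho_i^{-1}$ and using $\sigma_{i-1} = \sigma_i\pi_i^{-1}$, a factor-$i$ edge joins $\sigma_{i-1}(a)$ to $\sigma_i(a)$, so the chain $\sigma_0(a) - \sigma_1(a) - \cdots - \sigma_k(a)$ connects $\gamma^{-1}(a) = \sigma_0(a)$ to $a = \sigma_k(a)$ for every $a$, where $\gamma = (12\cdots n)$. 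Because $\gamma$ is an $n$-cycle, these paths connect all of $[n]$; this is exactly where the transitivity implicit in factoring an $n$-cycle enters.

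With the lemma in hand, I would sum $\prod_i x_i^{r_i}$ over all profiles with weight-sum $1$ using a $d$th-root-of-unity filter: a single cycle of factor $i$ contributes $x_i$ if its weight is $0$ and $1$ otherwise, so the constrained sum equals $\tfrac1d\bigl(\prod_i (x_i + d - 1)^{m_i} - \prod_i(x_i - 1)^{m_i}\bigr)$. Multiplying by the uniform count $d^{(k-1)n-M+1}$ and summing over all underlying factorizations, the expression becomes $d^{(k-1)n}$ times $F(\mathbf{y}+\mathbf{1}) - F(\mathbf{y})$, where $F(\mathbf{y}) := \sum_{\mathbf{m}} N_{\mathbf{m}} \prod_i y_i^{m_i}$ is the $\Symm_n$ factorization generating function and $y_i := (x_i-1)/d$ (so that $(x_i-1)^{(d)}_{p_i}/(d^{p_i}p_i!) = \binom{y_i}{p_i}$). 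Invoking Corollary~\ref{S_n corollary} and Proposition~\ref{prop:change of basis} writes $F(\mathbf{y}) = (n!)^{k-1}\sum_{\pp}M^{n-1}_{\pp - \mathbf{1}}\prod_i\binom{y_i}{p_i}$, and the prefactor collects as $d^{(k-1)n}(n!)^{k-1} = |G|^{k-1}$. Finally I would expand $\binom{y_i+1}{p_i} = \binom{y_i}{p_i} + \binom{y_i}{p_i-1}$ by Pascal's rule; extracting the coefficient of $\prod_i\binom{y_i}{q_i}$ turns $F(\mathbf{y}+\mathbf{1})-F(\mathbf{y})$ into $\sum_{\varnothing\neq S}M^{n-1}_{\mathbf{q} - \mathbf{1} + \ee_S}$, which is exactly $M^n_{\mathbf{q}}$ by the recurrence of Proposition~\ref{prop:recurrence Ms}, matching the claimed right-hand side term by term.
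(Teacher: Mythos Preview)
Your proof is correct and takes a genuinely different route from the paper's, though both share the overarching strategy of projecting to $\Symm_n$ and counting lifts.

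The paper introduces colored factorizations with $d$-strips (Definition~\ref{def:coloring}) and proves the key identity $C^{(d,1,n)}_{\pp} = d^{(k-1)n}\sum_{\varnothing\neq S}C^{\langle n\rangle}_{\pp+\ee_S}$ (Lemma~\ref{G(d, 1, n) lemma}) by an explicit iterative construction: weights are assigned one at a time in a chosen order, producing $d^{nk-1}$ factorizations of elements $c^*$ that are diagonal conjugates of $c$, and then one divides by the $d^{n-1}$ such conjugates. The theorem then follows by a change-of-basis argument interpreting both sides at nonnegative integers.

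Your approach bypasses colorings entirely. Your key lemma is sharper: the lift count for a \emph{fixed} cycle-weight profile (not merely a fixed pattern of zero/nonzero weights) is exactly $d^{(k-1)n-M+1}$. You establish this by linear algebra over $\ZZ/d\ZZ$, reducing surjectivity of $L$ to the equality $\sum_i\rho_i\cdot Z_i = H_0$, and then to connectivity of an auxiliary graph. The path $\sigma_0(a)-\cdots-\sigma_k(a)$ linking $\gamma^{-1}(a)$ to $a$ is a clean way to see exactly where the $n$-cycle hypothesis enters (as transitivity); this is the structural analogue of the paper's diagonal-conjugacy averaging. One small caveat: the phrase ``dimension count'' is slightly loose since $\ZZ/d\ZZ$ is not a field, but your argument is fine as a cardinality count for group homomorphisms between finite abelian groups (the connectivity gives surjectivity over $\ZZ$, hence over $\ZZ/d\ZZ$, and all nonempty fibers of a homomorphism have size $|\ker L|$). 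From there, the $d$th-root-of-unity filter and the Pascal expansion do the work that the paper's colored-factorization bookkeeping does; both routes land on the same recurrence $\sum_{\varnothing\neq T}M^{n-1}_{\mathbf{q}-\mathbf{1}+\ee_T}=M^n_{\mathbf{q}}$ from Proposition~\ref{prop:recurrence Ms}.

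What each buys: the paper's coloring framework extends directly to the cycle-type refinement in Section~\ref{cycle type} and to the $G(d,d,n)$ argument, where the same $d$-strip machinery is reused. Your approach is more self-contained and gives a transparent algebraic reason for the uniform factor $d^{(k-1)n-M+1}$ without the averaging step; it would be interesting to see whether the connectivity viewpoint adapts to the transitive $G(d,d,n)$ case (where the paper's analogue, Lemma~\ref{(d, d, n) transitive lemma}, requires a more delicate ordering of weights).
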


The case of two factors (in Theorem~\ref{thm: main G(d,1,n)}) follows immediately as a corollary, taking $k = 2$ in Theorem~\ref{G(d, 1, n) many factors theorem} and using the fact that  $M^n_{p_1,p_2} = \binom{n}{p_1; p_2; n-p_1-p_2}$.

The main step in the proof of Theorem~\ref{G(d, 1, n) many factors theorem} is a lemma involving certain cycle-colored factorizations of the element $c$, which we now describe.  (The remainder of the proof, which is in the spirit of Proposition~\ref{prop:change of basis}, follows the proof of the lemma.)
\begin{definition}
\label{def:coloring}
Given a nonnegative integer $x$ and an element $u$ in $G(d, 1, n)$, let $\chi$ denote the \emph{color set} $\chi = \{0, 1, 2, \ldots, xd\}$.  Within the color set $\chi$, a \emph{$d$-strip} is any of the following collections of $d$ consecutive colors: $\{1,\ldots,d\}$, $\{d+1,\ldots,2d\}$, \ldots, $\{(x-1)d+1,\ldots,xd\}$.  Thus, there are exactly $x$ disjoint $d$-strips in $\chi$, and the color $0$ does not belong to any $d$-strip.

A \emph{cycle-coloring} of $u$ is an assignment of a color in $\chi$ to each cycle of $u$ so that cycles of nonzero weight receive color $0$.  Given nonnegative integers $x_1, \ldots, x_k$, a \emph{colored factorization} of an element $g$ of $G(d, 1, n)$ is a factorization $g = u_1 \cdots u_k$ together with a cycle-coloring of $u_i$ by color set $\chi_i = \{0, 1, \ldots, x_i d\}$ for $i = 1, \ldots, k$.

Given nonnegative integers $p_1, \ldots, p_k$, let $\CCC^{(d, 1, n)}_{p_1,\ldots,p_k}$ be the set of colored factorizations of the Coxeter element $c$ in $G(d, 1, n)$ with color sets $\chi_i = \{0, \ldots, p_i d\}$ so that at least one color from each $d$-strip in $\chi_i$ is actually used to color a cycle in $u_i$.  Let $C^{(d, 1, n)}_{\pp} = |\CCC^{(d, 1, n)}_{\pp}|$. 
\end{definition}

\begin{remark}
Observe that for factorizations in $\CCC^{(d, 1, n)}_{\pp}$, the coloring of cycles of different factors is completely independent: the requirement in the definition that each strip be used is factor-by-factor, so whether a color from the strip $\{1, 2, \ldots, d\}$ (for example) is used to color a cycle in $u_1$ has no bearing on the requirement that a color from that strip be used to color a cycle in $u_2$ (if $p_2 \geq 1$).
\end{remark}

The first part of the proof of Theorem~\ref{G(d, 1, n) many factors theorem} is a formula for $C^{(d, 1, n)}_{\pp}$ in terms of the counts $C^{\langle n \rangle}_{{\bf q}}$ of colored factorizations of the $n$-cycle in $\Symm_n$ introduced just before Proposition~\ref{prop:change of basis}.

\begin{lemma}
\label{G(d, 1, n) lemma}
For any ${\pp} = (p_1, \ldots, p_k)$ in $\NN^k$, we have
\[
C^{(d, 1, n)}_{\pp} = d^{(k-1)n} \sum_{\varnothing \neq S\subseteq [k]}
C^{\langle n \rangle}_{\pp + \ee_S}
\]
where $\ee_S$ is the indicator vector for $S$.
\end{lemma}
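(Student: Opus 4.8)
The plan is to prove the identity by projecting each colored factorization in $\CCC^{(d, 1, n)}_{\pp}$ down to $\Symm_n$ and repackaging the weight-and-color data one cycle at a time. A colored factorization counted by $C^{(d, 1, n)}_{\pp}$ consists of a factorization $c = u_1 \cdots u_k$ together with a cycle-coloring of each $u_i$ by the color set $\chi_i = \{0, 1, \ldots, p_i d\}$. Applying the projection $G(d,1,n) \twoheadrightarrow \Symm_n$ to each factor sends this to a factorization $(12\cdots n) = \bar u_1 \cdots \bar u_k$ of the $n$-cycle in $\Symm_n$, since the underlying permutation of $c$ is $(12\cdots n)$. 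The first step is to record, for each cycle $\gamma$ of $u_i$, its \emph{reduced color}: the index $s \in [p_i]$ of the $d$-strip containing the color of $\gamma$, or the symbol $\star$ if $\gamma$ is colored $0$.

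The engine of the proof is a bijection, carried out cycle by cycle, between the pair (cycle weight $W_\gamma \in \ZZ/d\ZZ$, admissible color in $\chi_i$) and the pair (reduced color in $[p_i] \cup \{\star\}$, auxiliary value $\theta_\gamma \in \ZZ/d\ZZ$): a weight-$0$ cycle colored with the $t$th color of strip $s$ maps to $(s, \theta = t)$, while any cycle colored $0$ — which includes every nonzero-weight cycle, as those are forced to color $0$ — maps to $(\star, \theta = W_\gamma)$. Both sides have $(p_i + 1)d$ elements and the map is clearly invertible. Under this repackaging the reduced coloring of $\bar u_i$ is a coloring of its cycles by $[p_i] \cup \{\star\}$ in which every strip color in $[p_i]$ is used; letting $S \subseteq [k]$ be the set of factors in which $\star$ is actually used, this reduced coloring is exactly a colored factorization of the type counted by $C^{\langle n \rangle}_{\pp + \ee_S}$ (using Remark~\ref{remark: independence color sets} to match color sets).

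Next I would explain why only nonempty $S$ occurs and where the factor $d^{(k-1)n}$ comes from. Since the total weight is a homomorphism $G(d,1,n) \to \ZZ/d\ZZ$ and the total weight of $c$ is $1 \neq 0$, some factor must contain a nonzero-weight cycle, which is forced to color $0$, so $\star$ is used and $S \neq \varnothing$. For the multiplicative factor, fix the underlying factorization $\bar u_1 \cdots \bar u_k$ and a reduced coloring. The auxiliary values $\theta_\gamma$ on the strip-colored cycles are the offsets and do not affect the underlying weights, so they are free and contribute $d^{\sigma}$, where $\sigma$ is the total number of strip-colored cycles; the remaining data is a lift of the underlying factorization to weights $a^{(i)}$ with product $c$ in which every strip-colored cycle has weight $0$. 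Writing $L$ for the number of such weight lifts, the contribution of this underlying factorization and reduced coloring is $d^{\sigma} \cdot L$, and the whole identity reduces to showing $d^\sigma L = d^{(k-1)n}$, i.e.\ $L = d^{(k-1)n - \sigma}$, whenever $S \neq \varnothing$. Granting this, summing over underlying factorizations and over reduced colorings with $S \neq \varnothing$ (and using that summing the number of colorings-with-all-colors-used over all underlying factorizations recovers $C^{\langle n \rangle}_{\pp + \ee_S}$ by definition) gives $C^{(d,1,n)}_{\pp} = d^{(k-1)n}\sum_{\varnothing \neq S \subseteq [k]} C^{\langle n\rangle}_{\pp + \ee_S}$.

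The hard part will be the count $L = d^{(k-1)n - \sigma}$, which amounts to linear independence over $\ZZ/d\ZZ$ of the constraints ``product weight equals the weight of $c$'' (that is, $n$ scalar constraints) and ``each strip-colored cycle has weight $0$'' ($\sigma$ further constraints). This is exactly where transitivity enters. A nontrivial linear relation among these $n + \sigma$ constraints unpacks into a nonzero function $\alpha$ on $[n]$ that is constant on the cycles of each conjugated factor $\rho_i^{-1}\bar u_i \rho_i$, where $\rho_i = \bar u_{i+1}\cdots \bar u_k$, and that vanishes off the strip-colored cycles. Since $\rho_{i-1} = \bar u_i \rho_i$, the product of these conjugated factors telescopes to $\bar u_1 \cdots \bar u_k = (12\cdots n)$, so they generate a transitive group; hence $\alpha$, being invariant under all of them, must be constant on $[n]$. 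A nonzero constant forces every cycle of every factor to be strip-colored, i.e.\ $S = \varnothing$, and in that degenerate case the constraints are not merely dependent but inconsistent (they would force the total weight of $c$ to be $0$), so $L = 0$ there. Thus the dependent cases are precisely the excluded $S = \varnothing$ terms, independence holds whenever $S \neq \varnothing$, and the count goes through. I expect the bookkeeping in this last step — translating a relation among the constraints into the invariant function $\alpha$ and checking its support condition for general $k$ — to be the main technical point, with the cycle-by-cycle bijection and the total-weight observation being comparatively routine.
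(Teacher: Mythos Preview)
Your proof is correct and reaches the same conclusion as the paper, but the preimage-counting step is organized quite differently. Both arguments use the same projection $G(d,1,n)\twoheadrightarrow\Symm_n$, strip-collapse of colors, and total-weight observation forcing $S\neq\varnothing$. Where they diverge is in counting the lifts. The paper never tries to count lifts of a fixed $\Symm_n$-factorization to factorizations of $c$ directly; instead it fixes a clever total order on the $nk$ weight slots, assigns weights sequentially with a short list of forced choices, and thereby produces exactly $d^{nk-1}$ lifts to factorizations of \emph{some} diagonal conjugate $c^*$ of $c$. It then observes that conjugation by diagonal matrices gives bijections among these lifts, so dividing by the $d^{n-1}$ conjugates yields the uniform preimage count $d^{(k-1)n}$.

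Your route is to write the lifting conditions as an explicit system over $\ZZ/d\ZZ$ (the $n$ ``product equals $c$'' equations and the $\sigma$ ``strip-colored cycle has weight $0$'' equations) and to show that this system has exactly $d^{nk-n-\sigma}$ solutions by proving there are no nontrivial $\ZZ/d\ZZ$-relations among the equations. The transitivity argument you give---that a relation forces the coefficient function $\alpha$ to be invariant under the conjugated factors $\rho_i^{-1}\bar u_i\rho_i$, whose product telescopes to the $n$-cycle, hence constant, hence zero whenever $S\neq\varnothing$---is correct and pinpoints exactly why the $n$-cycle hypothesis matters. One point worth making explicit when you write it up: over a ring that is not a field, ``no nontrivial relation among the rows'' is the statement that $A^T$ is injective on $(\ZZ/d\ZZ)^{n+\sigma}$, which by duality of finite abelian groups is equivalent to $A$ being surjective onto $(\ZZ/d\ZZ)^{n+\sigma}$; that surjectivity is what actually gives the fiber size $d^{nk-(n+\sigma)}$. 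Your argument establishes the injectivity uniformly (it works over any coefficient group), so this step goes through, but it deserves a sentence.

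The paper's approach is more hands-on and avoids any module-theoretic reasoning over $\ZZ/d\ZZ$; the ``count over all diagonal conjugates, then divide'' device is also what they reuse for $G(d,d,n)$. Your approach is more structural: it isolates the combinatorial content (transitivity of the underlying factorization) as the precise reason the constraints are independent, and it separates cleanly the color data ($d^\sigma$) from the weight data ($L$). Either proof is fine; yours would benefit from the one-line justification above that independence over $\ZZ/d\ZZ$ really does imply the claimed solution count.
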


\begin{proof}
Given a colored factorization $c = u_1 \cdots u_k$ of the Coxeter
element $c$ for $G(d, 1, n)$, we associate to it
a colored factorization $\cn = \pi_1 \cdots \pi_k$ of the $n$-cycle $\cn$ in $\Symm_n$, as follows: $\pi_i$ is the projection of $u_i$ in $\Symm_n$; if a cycle of $u_i$ is colored with a color in the $d$-strip $\{(a - 1)d + 1, \ldots, ad\}$, then the corresponding cycle of $\pi_i$ is colored with color $a$; if a cycle of $u_i$ is colored with color $0$, then the corresponding cycle of $\pi_i$ is colored with color $0$.  Thus, in the resulting colored factorization of $\cn$, the $i$th factor is colored in either $p_i$ or $p_i + 1$ colors, with every color appearing.  Let $S \subseteq [k]$ denote the set of indices $i$ such that $\pi_i$ is colored in $p_i + 1$ colors (rather than $p_i$); equivalently, it is the set of indices $i$ such that some cycle of $u_i$ is colored with $0$.

First, we observe that $S \neq \varnothing$: the product $c = u_1 \cdots u_k$ has nonzero total weight, so at least one of the factors $u_i$ has
nonzero weight, and this factor must have a cycle with nonzero weight. Such a cycle is colored with
the special color $0$, and so at least for this value of $i$ we have $i \in S$, as claimed. 
Thus the image of the set $\CCC^{(d, 1, n)}_{\pp}$ of colored factorizations of the Coxeter element $c$ under projection is contained in the disjoint union of pieces
$\bigsqcup_{S} \CCC^{\langle n\rangle}_{{\pp} + {\ee}_S}$ for nonempty
sets $S\subseteq [k]$, where each piece $\CCC^{\langle
  n\rangle}_{{\pp} + {\ee}_S}$ consists of colored factorizations of the
$n$-cycle $\cn$ with the appropriate set of colors used in each
factor (see Remark~\ref{remark: independence color sets}).  

Second, we consider how many preimages each factorization in
$\CCC^{\langle n\rangle}_{{\pp} + {\ee}_S}$ has under this map.  To choose a
preimage, we must assign weights $(a_{i, 1}, \ldots, a_{i, n})$ to the entries of each factor $\pi_i$
in such a way that the product of the resulting factors $u_i$ really
is the Coxeter element $c$, \emph{and} so that in each $u_i$, any
cycle of nonzero weight was originally colored by the color $0$;
and we must choose one of $d$ colors from a $d$-strip for each of the
cycles in $u_i$ that corresponds to a cycle in $\pi_i$ of nonzero
color.

In order to do this, we consider a too-large set of colored factorizations in $G(d, 1, n)$, initially disregarding the requirement that the factored element be $c$.  Given a colored factorization $\cn = \pi_1 \cdots \pi_k$ in $\CCC^{\langle n\rangle}_{{\pp} + {\ee}_S}$ with $S$ nonempty, choose a total order on the set $\{(i, m) \colon i \in [k], m \in [n]\}$ of indices of weights to be assigned, in such a way that the last index in the total order belongs to a cycle of color $0$ in some factor.  (Such cycles must exist, since $S \neq \varnothing$.)  Then we assign values to the weights one-by-one according to the chosen order, choosing the weights arbitrarily except in two cases: if an element belongs to a cycle of nonzero color and is the last element (in the given total order) in its cycle, we assign it the unique weight so that the total weight of its cycle is $0$; and we choose the weight of the specially selected final element so that the total weight of all elements is $1$.  (These two exceptions never conflict because the special element was chosen in a cycle of color $0$.)  The number of ways to perform these choices is $d^{nk - \#(\textrm{colored cycles}) - 1}$.  Finally, for each cycle of $\pi_i$ that is colored some nonzero color, there are $d$ choices for the color in the associated $d$-strip of the corresponding cycle of the lift $u_i$ of $\pi_i$; this contributes a factor of $d^{\#(\textrm{colored cycles})}$, for a total of $d^{nk- 1}$ lifts of the fixed $\Symm_n$-factorization.

Each lift is a colored factorization $u_1 \cdots u_k$ in $G(d, 1, n)$ of some element $c^*$ of weight $1$ whose underlying permutation is the $n$-cycle $\cn$, and every colored factorization of such a $c^*$ with the correct collection of colors is produced by such a lift.  The number of such $c^*$ is $d^{n - 1}$; they are exactly the elements conjugate to $c$ by some diagonal matrix $a$ in $G(d, 1, n)$. Moreover, since $a$ is a diagonal matrix, conjugating any $w \in G(d, 1, n)$ by $a$ preserves the weight of every cycle of $w$.  Consequently, conjugation by $a$ extends to a bijection between factorizations of $c$ and factorizations of $c^*$ that respects the underlying permutation of each factor and the weight of each cycle of each factor.  Thus, it gives in particular a bijection between the lifts of $\pi_1 \cdots \pi_k$ that factor $c$ and those that factor $c^*$.  Hence, of the total $d^{nk - 1}$ lifts, exactly $\frac{1}{d^{n - 1}} \cdot d^{nk - 1} = d^{n(k - 1)}$ of them are factorizations of $c$.  Since this holds for every nonempty $S \subseteq [k]$ and every factorization in $\CCC^{\langle n\rangle}_{{\pp} + {\ee}_S}$, the lemma is proved.
\end{proof}

\begin{proof}[Proof of Thm.~\ref{G(d, 1, n) many factors theorem}]
For any nonnegative integer $p$, we have $((xd + 1) - 1)^{(d)}_p = d^p p! \cdot \binom{x}{p}$.  
Therefore, the desired statement is equivalent to the equality
\begin{equation}\label{key identity for G(d, 1, n)}
\sum_{r_1,\ldots,r_k} a^{(d)}_{r_1,\ldots,r_k} (x_1d+1)^{r_1} \cdots
(x_kd+1)^{r_k} = d^{(k - 1)n}(n!)^{k-1} \sum_{p_1,\ldots,p_k} M^n_{p_1,\ldots,p_k}
\binom{x_1}{p_1} \cdots \binom{x_k}{p_k}.
\end{equation}
Consider the case that each $x_i$ is a nonnegative integer.
In this case, the LHS of \eqref{key identity for G(d, 1, n)} exactly counts the colored factorizations $c = u_1 \cdots u_k$ of the Coxeter element $c$ in which the cycles of factor $u_i$ are colored with color set $\chi_i = \{0, 1, \ldots, x_id\}$ so that cycles of nonzero weight receive color $0$.  Now, we count those factorizations by the number of $d$-strips that are actually used: for nonnegative integers $p_1, \ldots, p_k$, there are $\binom{x_1}{p_1} \cdots \binom{x_k}{p_k}$ ways to choose $p_i$ $d$-strips to use in the $i$th factor, and $C^{(d, 1, n)}_{p_1,\ldots,p_k}$ colored factorizations using exactly these strips.  Thus
\[
\sum_{r_1,\ldots,r_k} a^{(d)}_{r_1,\ldots,r_k} (x_1d+1)^{r_1} \cdots
(x_kd+1)^{r_k} = \sum_{p_1,\ldots,p_k} C^{(d, 1, n)}_{p_1,\ldots,p_k}
\binom{x_1}{p_1} \cdots \binom{x_k}{p_k}.
\]
By Lemma~\ref{G(d, 1, n) lemma}, 
\[
C^{(d, 1, n)}_{\pp} = d^{(k-1)n} \sum_{\varnothing \neq S\subseteq [k]}
C^{\langle n \rangle}_{{\pp} + {\ee}_S}.
\]
By Corollary~\ref{S_n corollary}, we can rewrite this as
\[
C^{(d, 1, n)}_{\pp} = d^{(k-1)n}(n!)^{k - 1} \sum_{\varnothing \neq S\subseteq [k]} M^{n-1}_{{\pp} -{\bf 1} + {\ee}_S}
\]
where $\bf 1$ is the all-ones vector.  Then the desired equality follows by Proposition~\ref{prop:recurrence Ms}.  Finally, since this identity is valid for all nonnegative integer values of the $x_i$, it is valid as a polynomial identity, as well.  This completes the proof.
\end{proof}

\section{Factorization results for the subgroup $G(d,d,n)$}
\label{sec:ddn}

As in the case of the $(n - 1)$-cycle in $\Symm_n$, factorizations of
a Coxeter element in $G(d, d, n)$ can be separated into two classes
based on a transitivity property that we describe now.  The wreath
product $G(d, 1, n)$ carries a natural permutation action: it acts on
$d$ copies of $[n]$ indexed by $d$th roots of unity, or equivalently,
on the set $\{z^i e_j \colon 0 \leq i < d, 1 \leq j \leq n\}$ where
$z$ is a primitive $d$th root of unity and $e_j$ are the standard
basis vectors for $\CC^n$.  The Coxeter elements for $G(d, 1, n)$ act
transitively on this set, and consequently every factorization of a
Coxeter element in $G(d, 1, n)$ is a transitive factorization.
However, the same is not true for the subgroup $G(d, d, n)$, where the
underlying permutations of the Coxeter elements are $(n - 1)$-cycles.  The action of the Coxeter element $c_{(d, d, n)}$ divides $\{z^i e_j \colon 0 \leq i < d, 1 \leq j \leq n\}$ into two orbits, $\{z^i e_j \colon 0 \leq i < d, 1 \leq j < n\}$ and $\{z^i e_n \colon 0 \leq i < d\}$.  Thus, a factorization of $c_{(d, d, n)}$ will be transitive if and only if some factor sends an element of the second orbit to an element of the first, or equivalently if the underlying factorization in $\Symm_n$ is transitive. In enumerating factorizations of $c_{(d, d, n)}$, we handle the transitive and nontransitive factorizations separately.

\subsection{Transitive factorizations}

As mentioned in the introduction, our generating function in this case is in terms of the polynomials $\poly{k}{d}{x}$ defined by 
$\poly{0}{d}{x} = 1$, $\poly{1}{d}{x} = x$, and for $k > 1$
\begin{equation} 
\label{eq: equivalent expressions G(d,d,n) basis}
\poly{k}{d}{x} :=  \prod_{i = 1}^k (x - e^*_i) = (x - (k - 1)(d - 1)) \cdot (x - 1)^{(d)}_{k - 1}  = (x-1)^{(d)}_k + k(x-1)^{(d)}_{k-1}
\end{equation}
where the $e^*_i$ are the coexponents of the group $G(d, d, k)$.  Next, we recall the statement to be proved.

\begin{theorem} \label{thm:mainG(d,d,n) k factors}
For $d > 1$, let $G = G(d, d, n)$ and let
 $b^{(d)}_{r_1,\ldots,r_k}$ be the number of transitive factorizations of a Coxeter element $c_{(d, d, n)}$ in $G$ as a product of $k$
elements of $G$ with fixed space dimensions $r_1,\ldots,r_k$, respectively. 
Then
\[
\sum_{r_1,\ldots,r_k
  \geq 0} b^{(d)}_{r_1,\ldots,r_k} x_1^{r_1}\cdots x_k^{r_k}
 = \frac{|G|^{k-1}}{n^k}\sum_{p_1,\ldots,p_k \geq 1}
 M^n_{p_1,\ldots,p_k} \frac{\poly{p_1}{d}{x_1}}{d^{p_1-1} (p_1-1)!} \cdots 
 \frac{\poly{p_k}{d}{x_k}}{d^{p_k-1} (p_k-1)!},
 \]
where $M^n_{p_1,\ldots,p_k}$ is as defined in \eqref{definition M coeff}.
 \end{theorem}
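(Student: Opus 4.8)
The plan is to reduce the enumeration of transitive factorizations of the Coxeter element $c_{(d,d,n)}$ in $G(d,d,n)$ to the already-established count of transitive factorizations of the $(n-1)$-cycle in $\Symm_n$, in exactly the same spirit as the proof of Theorem~\ref{G(d, 1, n) many factors theorem} reduced the $G(d,1,n)$ problem to the $n$-cycle case. First I would introduce a notion of colored transitive factorizations of $c_{(d,d,n)}$ by color sets $\chi_i = \{0,1,\ldots, x_i d\}$, where cycles of nonzero weight are forced to receive the special color $0$, and count colored factorizations $D^{(d,d,n)}_{\pp}$ in which each of the $p_i$ chosen $d$-strips is actually used in factor $i$. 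By the same strip-counting argument as before, the left-hand side of the theorem (after the substitution $x_i \mapsto x_i d + 1$) should expand in the binomial basis $\binom{x_1}{p_1}\cdots\binom{x_k}{p_k}$ with coefficients $D^{(d,d,n)}_{\pp}$, provided I verify that the polynomial basis $\poly{p}{d}{x}$ interacts correctly with this substitution.

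The key computation is the analogue of the identity $((xd+1)-1)^{(d)}_p = d^p p!\binom{x}{p}$ used in the $G(d,1,n)$ proof. Here I would use the third expression in \eqref{eq: equivalent expressions G(d,d,n) basis}, namely $\poly{p}{d}{x} = (x-1)^{(d)}_p + p\,(x-1)^{(d)}_{p-1}$, to compute $\poly{p}{d}{x_i d + 1}$. Substituting gives $d^p p!\binom{x}{p} + p\cdot d^{p-1}(p-1)!\binom{x}{p-1} = d^{p-1}(p-1)!\bigl(d\,p\binom{x}{p} + p\binom{x}{p-1}\bigr)$, so that $\poly{p}{d}{x_id+1}/(d^{p-1}(p-1)!) = p\bigl(d\binom{x}{p} + \binom{x}{p-1}\bigr)$. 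This is the identity that converts the right-hand side of the theorem into a combination of products of binomials $\binom{x_i}{p_i}$ and $\binom{x_i}{p_i-1}$, and it is the exact statement I must match against the combinatorial count of colored factorizations.

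The heart of the argument is a lemma expressing $D^{(d,d,n)}_{\pp}$ in terms of the colored transitive factorization counts $C^{\langle n-1,1\rangle}_{\qq}$ of the $(n-1)$-cycle from Remark~\ref{rem:(n - 1)-cycle}. The projection $G(d,d,n)\to\Symm_n$ sends $c_{(d,d,n)}$ to the $(n-1)$-cycle $\cnn$, and transitivity of the $G(d,d,n)$-factorization is equivalent to transitivity of the underlying $\Symm_n$-factorization, as explained in the text preceding this subsection. As in Lemma~\ref{G(d, 1, n) lemma}, I would fix a projected colored transitive factorization of $\cnn$ and count its weight-lifts to $G(d,d,n)$ that produce $c_{(d,d,n)}$, carefully tracking the color-$0$ bookkeeping. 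The main difference from the $G(d,1,n)$ case is the weight-$0$ constraint defining $G(d,d,n)$ together with the fact that $c_{(d,d,n)}$ has two cycles of nonzero weight (the $(n-1)$-cycle of weight $1$ and the fixed point of weight $-1$), which changes both the count of valid lifts and the structure of which factors are forced to use color $0$. Combining this lemma with Remark~\ref{rem:(n - 1)-cycle} and the recurrence for $M^n_{\pp}$ from Proposition~\ref{prop:recurrence Ms}, and then using the substitution identity above to pass back to the $\poly{p}{d}{x}$ basis, should yield the claimed formula; finally, validity for all nonnegative integers $x_i$ upgrades it to a polynomial identity.

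I expect the main obstacle to be the lift-counting step of the lemma: unlike the $G(d,1,n)$ case where the single global weight constraint ``total weight $=1$'' interacts cleanly with one distinguished color-$0$ cycle, here the defining constraint ``total weight $=0$'' for $G(d,d,n)$ must be reconciled with the presence of \emph{two} weight-bearing cycles in $c_{(d,d,n)}$ and with transitivity. I anticipate needing to argue carefully about how many factors are forced to contain a color-$0$ cycle, and to pin down the exact power of $d$ counting valid weight assignments; matching this combinatorial count against the $p\bigl(d\binom{x}{p}+\binom{x}{p-1}\bigr)$ shape coming from the substitution — rather than the simpler $p\binom{x}{p}$ one might naively expect — is precisely where the extra summand $p(x-1)^{(d)}_{p-1}$ in the definition of $\poly{p}{d}{x}$ earns its place, and getting that bookkeeping right is the crux of the proof.
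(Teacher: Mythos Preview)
Your proposal is correct and follows essentially the same approach as the paper: the same colored-factorization setup, the same substitution $x_i\mapsto x_id+1$ with the identity $\poly{p}{d}{xd+1}/(d^{p-1}(p-1)!)=p\bigl(d\binom{x}{p}+\binom{x}{p-1}\bigr)$, and the same reduction to Theorem~\ref{S_n (n-1) cycle theorem k factors} via a lift-counting lemma of the form $C^{(d,d,n)}_{\pp}=\sum_{S\subseteq[k]} d^{(k-1)n-|S|+1}C^{\langle n-1,1\rangle}_{\pp+\ee_S}$. The paper's proof of that lemma (the step you flag as the crux) tracks the ``thread'' of the fixed point $n$ through the braid diagram of the projected factorization, distinguishing problematic from nonproblematic indices to organize the weight assignments; once the lemma is in hand, the recurrence of Proposition~\ref{prop:recurrence Ms} is not needed, since expanding $\prod_i(d\binom{x_i}{p_i}+\binom{x_i}{p_i-1})$ already produces exactly the sum over $S$ appearing in the lemma.
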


The case of two factors (in Theorem~\ref{thm:mainG(d,d,n)}) follows immediately as a corollary, taking $k = 2$ in Theorem~\ref{thm:mainG(d,d,n) k factors} and using the fact that $M^{n}_{p,q} = \binom{n}{p; q; n-p-q} = \frac{n(n - 1)}{pq} \binom{n - 2}{p - 1; q - 1; n - p - q}$.

As in Section~\ref{sec:G(d, 1, n)}, we split the proof into two parts.  The first concerns colored factorizations of the kind defined in Definition~\ref{def:coloring}.  Fix the standard $(n - 1)$-cycle $\cnn$ in $\Symm_n$ and Coxeter element $c_{(d, d, n)} = [\cnn; (0, \ldots, 0, 1, -1)]$ in $G(d, d, n)$.  Given nonnegative integers $p_1, \ldots, p_k$, let $C^{(d, d, n)}_{p_1,\ldots,p_k}$ be the number of colored transitive factorizations $c_{(d, d, n)} = u_1 \cdots u_k$ in $G(d, d, n)$ with color sets $\chi_i = \{0, 1, \ldots, p_id\}$ so that at least one color from each $d$-strip in $\chi_i$ is actually used to color a cycle in $u_i$.

\begin{lemma}
\label{(d, d, n) transitive lemma}
We have
\begin{equation}
\label{ddn equation to be proved}
C^{(d, d, n)}_{p_1,\ldots,p_k} = \sum_{S\subseteq [k]} d^{(k-1)n-|S|+1} C^{\langle n - 1, 1 \rangle}_{\pp + \ee_S}, 
\end{equation}
where ${\ee}_S$ is the indicator vector for $S$ and $C^{\langle n - 1, 1 \rangle}_{\pp} =
(n!)^{k - 1}\frac{p_1 \cdots p_k}{n^k} \cdot M^n_{\pp}$ is
the coefficient of $\binom{x_1}{p_1}\cdots \binom{x_k}{p_k}$ in the
RHS of \eqref{n - 1 cycle k factors transitive factorizations}. 
\end{lemma}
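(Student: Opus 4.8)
The plan is to follow the template of Lemma~\ref{G(d, 1, n) lemma}: project a colored transitive factorization $c_{(d,d,n)} = u_1\cdots u_k$ to the colored factorization $\cnn = \pi_1\cdots\pi_k$ of the underlying $(n-1)$-cycle obtained by forgetting weights and collapsing each $d$-strip to a single color, and then count the preimages (lifts) of each fixed $\Symm_n$-factorization. As in Definition~\ref{def:coloring}, a cycle of $u_i$ colored in the $d$-strip $\{(a-1)d+1,\ldots,ad\}$ maps to a cycle of $\pi_i$ colored $a$, while color-$0$ cycles map to color-$0$ cycles; letting $S\subseteq[k]$ record the factors in which color $0$ actually appears, the image lies in $\CCC^{\langle n - 1, 1 \rangle}_{\pp+\ee_S}$. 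In contrast to the $G(d,1,n)$ case, the total weight of $c_{(d,d,n)}$ is $0$, so color $0$ need not appear and $S=\varnothing$ is now allowed; transitivity is preserved because a factorization in $G(d,d,n)$ is transitive on the $dn$-point set exactly when its underlying $\Symm_n$-factorization is transitive on $[n]$. Thus projection maps $\CCC^{(d,d,n)}_{\pp}$ into $\bigsqcup_{S\subseteq[k]} \CCC^{\langle n - 1, 1 \rangle}_{\pp+\ee_S}$, and it remains to show that every fibre has size $d^{(k-1)n-|S|+1}$.

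To count the lifts of a fixed factorization in the $S$-piece, I would first reduce the requirement ``the product is exactly $c_{(d,d,n)}$'' to a weight condition up to diagonal conjugacy. Conjugating the whole factorization by a diagonal matrix preserves every underlying permutation, the weight of every cycle of every factor (hence each factor's membership in $G(d,d,n)$), and the coloring, and carries $c_{(d,d,n)}$ to another element $c^*$ with underlying permutation $\cnn$ and the same pair of cycle-weights $(1,-1)$. These $c^*$ form a single orbit of size $d^{\,n-2}$ (one fewer free coordinate for each of the two cycles of $\cnn$). A lift then consists of a choice, for each nonzero-colored cycle of each $\pi_i$, of a color inside the corresponding $d$-strip, together with an assignment of entry-weights $a_{i,m}\in\ZZ/d\ZZ$ subject to: (A) each factor $u_i$ has total weight $0$; (B) each nonzero-colored cycle has weight $0$; and (C) the product has cycle-weights $(1,-1)$. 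If $N$ denotes the total number of nonzero-colored cycles, the strip-color choices contribute a factor of $d^{N}$, and the number of lifts landing anywhere on the orbit equidistributes over its $d^{\,n-2}$ elements, so the number landing on $c_{(d,d,n)}$ is $d^{-(n-2)}$ times the orbit total.

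The conditions (A)--(C) are $\ZZ/d\ZZ$-linear in the entry-weights, so the number of valid weight assignments is $d^{\,nk-r}$ once the system is shown to be consistent, where $d^{r}$ is the size of the image of the constraint map. The (B)-forms are the cycle-sums over the nonzero-colored cycles (disjoint supports, hence independent); modulo these, the (A)-forms reduce to the sum of the color-$0$ entries in each factor, contributing one new independent form for each $i\in S$. Hence (A) and (B) together have rank $N+|S|$. The crux is that (C) is one further independent and consistent constraint, and this is exactly where transitivity enters: writing the product as $[\cnn;A]$, the weight of the fixed point $n$ is $A_n=\sum_{i=1}^k a_{i,m_i}$, where $m_k=n$ and $m_{i-1}=\pi_i(m_i)$ trace the closed walk of $n$ through the factors, and (C) is equivalent (given (A)) to $A_n=-1$. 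Every form in the span of the (A)- and (B)-forms has a coefficient that is constant on each nonzero-colored cycle and constant on all color-$0$ entries of a given factor together; consequently $A_n$ lies in that span only if every $m_i$ is a fixed point of $\pi_i$, i.e.\ only if $n$ is fixed by all $\pi_i$. Transitivity forbids this, so some $m_i$ lies in a cycle of length $\ge 2$; adjusting $a_{i,m_i}$ and compensating on another entry of that same cycle changes $A_n$ freely while preserving (A) and (B). Thus (C) is independent and consistent, $r=N+|S|+1$, and there are $d^{\,nk-N-|S|-1}$ valid weight assignments.

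Combining the pieces, the number of lifts of $c_{(d,d,n)}$ over a fixed factorization in $\CCC^{\langle n - 1, 1 \rangle}_{\pp+\ee_S}$ is
\[
\frac{1}{d^{\,n-2}}\cdot d^{N}\cdot d^{\,nk-N-|S|-1} \;=\; d^{(k-1)n-|S|+1},
\]
independent of $N$ and of the chosen factorization. Summing over $S\subseteq[k]$ and over all factorizations in each piece then yields \eqref{ddn equation to be proved}, with $C^{\langle n - 1, 1 \rangle}_{\pp+\ee_S}$ as in Remark~\ref{rem:(n - 1)-cycle}. The step I expect to demand the most care is the independence and consistency of constraint (C), since the product's cycle-weights are genuinely not additive in the factors; the transitivity hypothesis is precisely the input that makes the required nonzero cycle-weight $(1,-1)$ attainable, and in the non-transitive case (where $n$ is fixed by every factor and $A_n$ collapses into the span of the (A)- and (B)-forms) the argument correctly breaks down.
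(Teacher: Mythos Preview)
Your proof is correct and follows the paper's overall strategy: project to colored transitive factorizations of $\cnn$ in $\Symm_n$, group by the set $S$ of factors using color~$0$, and count lifts by first enlarging to factorizations of any diagonal conjugate of $c_{(d,d,n)}$ and then dividing by the orbit size $d^{n-2}$. Where you diverge is in the mechanics of the lift count. The paper constructs an explicit total order on the $nk$ weight-coordinates, placing the ``thread'' indices $\{(i,\pi_{i+1}\cdots\pi_k(n))\}$ first with special handling of ``problematic'' singleton cycles along the thread, and then assigns weights sequentially, tallying free versus forced choices through a five-case analysis. You instead treat the constraints (A)~each $u_i$ has weight $0$, (B)~each nonzero-colored cycle has weight $0$, and (C)~the product's fixed point has weight $-1$ as a $\ZZ/d\ZZ$-linear system and compute its solution count directly. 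Your $\delta$-adjustment argument (perturbing $a_{i,m_i}$ and compensating on another entry of the same cycle) shows that $A_n$ is \emph{surjective} onto $\ZZ/d\ZZ$ modulo the (A)+(B) solution set, and it works precisely because transitivity guarantees some $m_i$ lies in a cycle of length $\geq 2$; this plays the same role as the paper's selection of a nonproblematic thread-index. The linear-algebra phrasing is cleaner and sidesteps the case split; the one point worth making explicit is that since $\ZZ/d\ZZ$ need not be a field, mere linear independence of forms is not enough, and it is exactly your surjectivity argument (not just the span computation) that pins down the fibre size as $d^{nk-N-|S|-1}$.
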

\begin{proof}
As in the proof of Lemma~\ref{G(d, 1, n) lemma}, we use the natural projection from colored factorizations of this sort to colored factorizations of the $(n - 1)$-cycle $\cnn$ in $\Symm_n$, where under projection a cycle that is colored by a color in the $d$-strip $\{(a - 1)d + 1, \ldots, ad\}$ gets sent to a cycle colored $a$, while a cycle colored by color $0$ gets sent to a cycle of color $0$.  If the original factorization is colored with colors $\pp$, the projected factorization is colored with colors ${\pp} + {\ee}_S$ for some subset $S \subseteq [k]$.  

Fix a subset $S \subseteq [k]$ and fix a colored transitive factorization $\pi_1 \cdots \pi_n = \cnn$ using ${\pp} + {\ee}_S$ colors.  We count preimages of this factorization under the projection.  As before, we consider a too-large set of colored factorizations, initially disregarding the requirement that the factored element be $c_{(d, d, n)}$.   We start by describing a total order on the $nk$-element set $\{(i, m) \colon i \in [k], m \in [n]\}$ of indices of weights to be chosen; we will then see that the elements may be assigned weights in $\ZZ/d\ZZ$ in this order in such a way that the number of valid choices of weights for each entry does not depend on earlier selections.

First, consider the $k$ indices $T = \{(k,n), (k - 1, \pi_k(n)), (k - 2, \pi_{k-1}\pi_k(n)), \ldots, (1, \pi_2\cdots \pi_k(n))\}$.  These are the coordinates on the ``thread'' connecting $n$ to $n$ in the braid diagram of the factorization -- see Figure~\ref{fig:thread}.  Say that the pair $(i, \pi_{i + 1} \cdots \pi_k(n))$ is \emph{problematic} if the number $\pi_{i + 1} \cdots \pi_k(n)$ is a fixed point of $\pi_i$; extend the adjective ``problematic'' to the $1$-cycle $\left(\pi_{i + 1} \cdots \pi_k(n)\right)$ of $\pi_i$.  Since we started with a \emph{transitive} factorization of $\cnn$, not all values in $T$ can be problematic.  The $k$ values in $T$ will form the first $k$ values in our linear order; moreover, we choose a nonproblematic value to be the last among these $k$.
\begin{figure}
\centering
\includegraphics{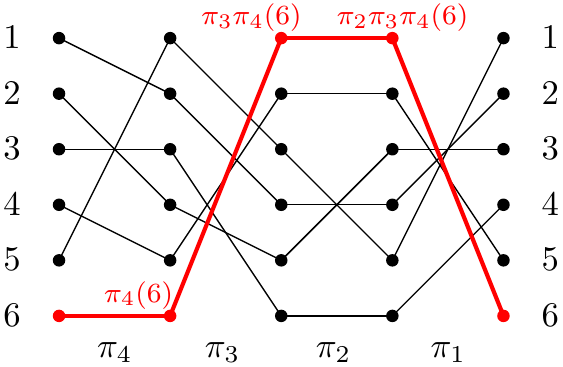}
\caption[A transitive factorization in $\Symm_6$.]{The transitive factorization 
of the $5$-cycle $\cn_{(5, 1)} = (12345)(6)$ in $\Symm_6$.  The thread involving the fixed point $6$ is highlighted.}
\label{fig:thread}
\end{figure}

For any $i$ and any nonproblematic cycle of $\pi_i$, there is an index $(i, m)$ corresponding to an entry of this cycle that is not among those already selected: at most one entry was selected from each cycle, and the only $1$-cycles from which an entry was selected are the problematic ones, by definition.  Therefore, for each $i$, we may select from every nonproblematic cycle of $\pi_i$ an index pair $(i, m)$ that was not already selected in the previous step.  These values form the \emph{last} values in our linear order.  Moreover, for each $i$ such that the color $0$ is used in a nonproblematic cycle in $\pi_i$, we arrange our order so that the index from one such cycle is last among the indices coming from that factor $\pi_i$.

All the remaining indices go in the middle, in any order.

Now we assign weights one-by-one, according to the selected total order.  We choose these arbitrarily, with the following exceptions:
\begin{itemize}
\item among the first $k - 1$ indices, a problematic index is assigned weight $0$ if its cycle has nonzero color, or if its cycle has color $0$ \emph{and} this is the only cycle of color $0$ in its factor;
\item the $k$th entry is assigned the unique value so that the sum of the first $k$ weights assigned (those that appear in the special thread connecting $n$ to $n$) is $-1$;
\item an index that is the last (in the total order) in its cycle, and this cycle has nonzero color, is assigned the unique weight so that its cycle has weight $0$; and
\item an index $(i, m)$ that is the last (in the total order) in its factor $\pi_i$, and $\pi_i$ has a nonproblematic cycle of color $0$, is assigned the unique weight so that the weights assigned to $\pi_i$ sum to $0$.
\end{itemize}
One can check that such an assignment of weights is always well defined (i.e., the different cases are disjoint); that the resulting product $u_1 \cdots u_k$ is equal to an element $c^*_{(d, d, n)}$ that is conjugate to $c_{(d, d, n)}$ by a diagonal matrix; and moreover that every transitive factorization of every such element $c^*_{(d, d, n)}$ arises from this construction.

Now we turn this into a counting argument.  By splitting the first bullet above into two cases, we see that each of the $nk$ indices has exactly $d$ choices of weight, with the following exceptions:
\begin{enumerate}
\item the (unique) values in problematic cycles with nonzero color;
\item the (unique) values in problematic cycles with color $0$ for which there are no other cycles of color $0$ in the permutation;
\item the $k$th value;
\item the last value in each nonproblematic cycle of nonzero color; and
\item the last value in each permutation that has a nonproblematic cycle of color $0$.
\end{enumerate}
Observe that the number of values in (1) and (4) together is exactly
the number of cycles with nonzero color, while the number of values in
(2) and (5) together is exactly the number of factors that have cycles
of color $0$.  Therefore, the total contribution from the weighting is 
\[
d^{nk - \#(\textrm{nonzero colored cycles}) - \#(\textrm{factors with cycle
    colored 0}) - 1}.
\]

In addition, each colored cycle gets assigned one of $d$ colors from its strip, hence we must multiply by $d^{\#\textrm{nonzero colored cycles}}$.  Since the number of factors with a cycle colored $0$ is exactly $|S|$, we have by Remark~\ref{rem:(n - 1)-cycle} that the total number of factorizations produced in this way is
\[
\sum_{S \subseteq [k]} d^{nk - |S| - 1} C^{\langle n - 1, 1 \rangle}_{{\pp} + {\ee}_S}.
\]
Finally, we must account for the fact that we have counted not merely factorizations of the Coxeter element $c_{(d, d, n)}$, but also factorizations of all of its conjugates by diagonal matrices.  The number of these is precisely $d^{n - 2}$, and as in the case of $G(d, 1, n)$, conjugation by a diagonal matrix is a bijection from factorizations to factorizations that preserves weights of all cycles, so they all have the same numbers of colored factorizations.  Consequently
\[
C^{(d, d, n)}_{\pp} = \sum_{S \subseteq [k]} d^{n(k - 1) - |S| + 1} C^{\langle n - 1, 1 \rangle}_{{\pp} + {\ee}_S},
\]
as claimed.
\end{proof}

\begin{proof}[Proof of Theorem~\ref{thm:mainG(d,d,n) k factors}]
First consider the RHS of the equation to be proved.  Make the substitution $x_i \mapsto x_id + 1$ for each $i$.
From \eqref{eq: equivalent expressions G(d,d,n) basis} we have that 
\[
 \frac{\poly{r}{d}{xd+1}}{d^{r-1} r!} =  d\binom{x}{r} + \binom{x}{r-1}.
\]
Therefore, we have
\begin{align*}
\frac{|G|^{k - 1}}{n^k} 
\sum_{p_1,\ldots,p_k} M^n_{p_1,\ldots,p_k} 
\prod_{i = 1}^k \frac{\poly{p_i}{d}{x_id + 1}}{d^{p_i-1} (p_i - 1)!} 
& = d^{(n - 1)(k - 1)} 
\sum_{p_1,\ldots,p_k} C^{\langle n - 1, 1\rangle}_{p_1, \ldots, p_k}
\prod_{i = 1}^k \frac{\poly{p_i}{d}{x_id + 1}}{d^{p_i-1} p_i!} 
\\
& = d^{(n - 1)(k - 1)} 
\sum_{p_1,\ldots,p_k} C^{\langle n - 1, 1\rangle}_{p_1, \ldots, p_k}
\prod_{i = 1}^k \left(d\binom{x_i}{p_i} + \binom{x_i}{p_i-1}\right) 
\\
 & = d^{(n - 1)(k - 1)} 
 \sum_{p_1, \ldots, p_k} \left(\sum_{S \subseteq [k]} d^{k - |S|} C^{\langle n - 1, 1\rangle}_{\pp + \ee_S} \right) \binom{x_1}{p_1} \cdots \binom{x_k}{p_k},
\end{align*}
where in the first step we use Remark~\ref{rem:(n - 1)-cycle} and in
the last step ${\ee}_S$ is the indicator vector for the set
$S$, as usual.  Collecting the power of $d$ and applying
Lemma~\ref{(d, d, n) transitive lemma}, this becomes
\begin{equation}
\label{Thm 4.1 RHS}
\frac{|G|^{k - 1}}{n^k} 
\sum_{p_1,\ldots,p_k} M^n_{p_1,\ldots,p_k} 
\prod_i \frac{\poly{p_i}{d}{x_id + 1}}{d^{p_i-1} (p_i - 1)!} 
=
\sum_{p_1,\ldots,p_k} C^{(d, d, n)}_{p_1,\ldots,p_k} \binom{x_1}{p_1}\cdots \binom{x_k}{p_k}.
\end{equation}

Now consider the LHS of the equation to be proved.  Again make the substitution $x_i \mapsto x_id + 1$, and think of $x_i$ as representing a nonnegative integer.  Then the LHS is the number of colored transitive factorizations of $c_{(d, d, n)}$.  The number of ways to choose $p_i$ $d$-strips that are actually used in the $i$th factor is $\binom{x_i}{p_i}$, and the number of colored factorizations in this case is exactly $C^{(d, d, n)}_{p_1,\ldots,p_k}$.  Thus
\[
\sum_{r_1,\ldots,r_k
  \geq 0} b^{(d)}_{r_1,\ldots,r_k} (x_1d+1)^{r_1}\cdots (x_kd+1)^{r_k} =
\sum_{p_1,\ldots,p_k} C^{(d, d, n)}_{p_1,\ldots,p_k} \binom{x_1}{p_1}\cdots \binom{x_k}{p_k},
\]
exactly the expression \eqref{Thm 4.1 RHS} we found for the RHS.  Since the
two sides are equal for nonnegative integers $x_i$, their equality is
valid as a polynomial identity, as well.  This completes the proof.
\end{proof}

\subsection{Nontransitive factorizations}

In the case of nontransitive factorizations of the Coxeter element $c_{(d, d, n)}$ in $G(d, d, n)$, we are again able to give a combinatorial formula for the number of colored factorizations, and thereby produce a generating function for factorization counts.  However, the lemma is more complicated than those in the preceding cases, and does not seem to yield a ``nice'' formula in a single basis, as in Theorems~\ref{G(d, 1, n) many factors theorem} and~\ref{thm:mainG(d,d,n) k factors}.

We consider cycle-colored factorizations of the kind defined in Definition~\ref{def:coloring}.  Given nonnegative integers $p_1, \ldots, p_k$, let $B^{(d, d, n)}_{p_1,\ldots,p_k}$ be the number of nontransitive factorizations $c_{(d, d, n)} = u_1 \cdots u_k$ in $G(d, d, n)$ such that, for $i = 1,\ldots, k$, the weight-$0$ cycles of $u_i$ are colored with colors from color set $\chi_i = \{0, 1, \ldots, p_id\}$ so that at least one color from each $d$-strip in $\chi_i$ is actually used to color a cycle.

\begin{lemma}
\label{(d, d, n) nontransitive lemma}
For integers $n, d \geq 2$, with $B^{(d, d, n)}_{p_1, \ldots, p_k}$ defined above, we have
\[
B^{(d, d, n)}_{p_1,\ldots,p_k} = d^{n(k - 1) + 1} \sum_{\substack{S, T, U \subseteq [k]: \\ S \cap T \neq \varnothing \\ S \cap U = \varnothing}}
d^{- |S \cup T|} \cdot \prod_{i \not \in S} p_i \cdot
C^{\langle n - 1 \rangle}_{\pp + \ee_T - \ee_U},
\]
where ${\ee}_S$ is the indicator vector for $S$ and $C^{\langle n - 1 \rangle}_{\pp} =
((n - 1)!)^{k - 1} \cdot M^{n - 2}_{\pp - {\bf 1}}$ is the count of colored factorizations of the $(n - 1)$-cycle in $\Symm_{n - 1}$ introduced just before Proposition~\ref{prop:change of basis}.
\end{lemma}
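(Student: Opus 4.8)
The plan is to follow the template of Lemmas~\ref{G(d, 1, n) lemma} and~\ref{(d, d, n) transitive lemma}: project the colored factorizations counted by $B^{(d, d, n)}_{\pp}$ onto colored factorizations of a permutation in a symmetric group, count the fibers of this projection, and at the end account for conjugation by diagonal matrices. The starting point is the observation made at the beginning of this section that a factorization of $c_{(d, d, n)}$ is nontransitive exactly when $n$ is a fixed point of the underlying permutation of every factor $u_i$. Since each underlying permutation fixes $n$, restricting the $u_i$ to the coordinates $\{1, \ldots, n - 1\}$ produces elements $u_i' \in G(d, 1, n - 1)$ whose product is the restriction of $c_{(d, d, n)}$, namely the standard Coxeter element $c_{(d, 1, n - 1)} = [(1 2 \cdots (n - 1)); (0, \ldots, 0, 1)]$. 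The requirement $u_i \in G(d, d, n)$ forces the weight on the fixed point to equal $a_{i, n} = -\operatorname{weight}(u_i')$, and since total weight is additive we have $\sum_i \operatorname{weight}(u_i') = \operatorname{weight}(c_{(d, 1, n - 1)}) = 1$, so the identity $\sum_i a_{i, n} = -1$ needed to reproduce the cycle $(n)$ of $c_{(d, d, n)}$ holds automatically. Nontransitive factorizations of $c_{(d, d, n)}$ are thereby in bijection with factorizations of $c_{(d, 1, n - 1)}$ in $G(d, 1, n - 1)$, with the single additional feature that the fixed-point cycle $(n)$ is weight-$0$ (hence colored) in $u_i$ precisely when $\operatorname{weight}(u_i') = 0$.

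First I would set up the projection from the factorizations counted by $B^{(d,d,n)}_{\pp}$ to colored factorizations of the $(n - 1)$-cycle $(1 2 \cdots (n - 1))$ in $\Symm_{n - 1}$, sending a cycle colored in the $d$-strip $\{(a - 1)d + 1, \ldots, ad\}$ to color $a$, and sending both the weight-$0$ cycles colored $0$ and the (uncolored) nonzero-weight cycles to color $0$; this lands the restricted data in the family enumerated by $C^{\langle n - 1 \rangle}$. The three subsets record how the coloring of $u_i$ by $p_i$ strips is reflected in the number of colors surviving on the $i$th projected factor $\pi_i'$: write $S$ for the factors in which $(n)$ carries nonzero weight (equivalently $\operatorname{weight}(u_i') \neq 0$, so that $(n)$ is uncolored), $T$ for the factors in which color $0$ appears on some cycle of $\pi_i'$, and $U$ for the factors in which the colored cycle $(n)$ occupies a $d$-strip that no weight-$0$ cycle of $u_i'$ uses. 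Because the strips used by the weight-$0$ cycles of $u_i'$ are exactly the $p_i$ strips minus the at most one strip claimed by $(n)$ alone, and color $0$ survives on $\pi_i'$ exactly for $i \in T$, the $i$th projected factor uses $p_i + [i \in T] - [i \in U]$ colors, matching the index $\pp + \ee_T - \ee_U$ of $C^{\langle n - 1 \rangle}$. The two side conditions are structural: a $d$-strip can be claimed by $(n)$ alone only when $(n)$ is weight-$0$, i.e.\ when $i \notin S$, giving $S \cap U = \varnothing$; and since the $\operatorname{weight}(u_i')$ sum to $1$, at least one factor has $\operatorname{weight}(u_i') \neq 0$, and any such factor contains a nonzero-weight cycle that is sent to color $0$, so it lies in both $S$ and $T$, giving $S \cap T \neq \varnothing$.

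The heart of the argument, and the step I expect to be the main obstacle, is the fiber count: having fixed a colored factorization of the $(n - 1)$-cycle together with a compatible triple $(S, T, U)$, I must determine how many colored factorizations of (the diagonal conjugates of) $c_{(d, d, n)}$ project to it. As in the earlier lemmas this is done by choosing a total order on the $(n - 1)k$ free weight-coordinates and assigning them one at a time so that the number of admissible values at each step is independent of the earlier choices: each cycle colored by a $d$-strip is a weight-$0$ cycle, contributing $d^{\ell - 1}$ choices of weight for a cycle of length $\ell$ together with a $d$-fold choice of shade within its strip, while the cycles sent to color $0$ have otherwise free weights, subject only to the single global constraint that the product equal $c_{(d, 1, n - 1)}$. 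The genuinely delicate points are that the membership $i \in S$ is not chosen in advance but is determined by the weights assigned to the color-$0$ cycles (since $\operatorname{weight}(u_i')$ is the sum of the weights of the nonzero-weight cycles of $u_i'$), and that color $0$ does double duty, recording both weight-$0$ cycles that were colored $0$ and nonzero-weight cycles that were never colored. Disentangling these, and separately accounting for the coloring of the extra fixed-point cycle $(n)$ for each $i \notin S$ -- whose net contribution is the factor $\prod_{i \notin S} p_i$ -- is what produces the power $d^{n(k - 1) + 1 - |S \cup T|}$ and the claimed coefficient.

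Finally, exactly as in Lemmas~\ref{G(d, 1, n) lemma} and~\ref{(d, d, n) transitive lemma}, the construction produces colored factorizations not of $c_{(d, d, n)}$ alone but of all of its conjugates by diagonal matrices in $G(d, d, n)$; since such conjugation preserves the underlying permutation and the weight of every cycle, it restricts to a color-preserving bijection among the corresponding sets of factorizations, so one divides by the number $d^{n - 2}$ of these conjugates. Summing the resulting per-factor contributions over all triples $(S, T, U)$ with $S \cap T \neq \varnothing$ and $S \cap U = \varnothing$ then yields the stated formula for $B^{(d, d, n)}_{p_1, \ldots, p_k}$.
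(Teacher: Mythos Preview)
Your overall strategy---project colored factorizations of $c_{(d,d,n)}$ to colored factorizations of an $(n-1)$-cycle in $\Symm_{n-1}$, count fibers, and divide by the number of diagonal conjugates---is the right one, and your observation that nontransitive factorizations of $c_{(d,d,n)}$ biject with factorizations of $c_{(d,1,n-1)}$ in $G(d,1,n-1)$ is correct and useful.  The paper takes a slightly different route: it first projects to \emph{nontransitive} colored factorizations of $\cnn$ in $\Symm_n$ (keeping the fixed point $(n)$ and its color), computes the fiber over $\Symm_n$, and only then deletes $(n)$ to land in $\Symm_{n-1}$.  The sets $T$ and $U$ you define agree with the paper's, but your $S$ does not, and this is where the gap lies.

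You take $S$ to be the set of factors in which the cycle $(n)$ has \emph{nonzero weight}.  The paper instead takes $S$ to be the set of factors in which the cycle $(n)$ has \emph{color $0$}; this is strictly larger, since a weight-$0$ copy of $(n)$ may also be colored $0$.  With your definition, one always has $S \subseteq T$: if $\operatorname{weight}(u_i') \neq 0$ then some cycle of $u_i'$ has nonzero weight and hence projects to color $0$, so $i \in T$.  But the formula to be proved sums over all triples with merely $S \cap T \neq \varnothing$, and for $k \geq 2$ there are terms with $S \not\subseteq T$ (for instance $S = \{1,2\}$, $T = \{1\}$).  So your decomposition cannot match the stated formula term by term, and the coefficient $d^{n(k-1)+1-|S\cup T|}\prod_{i\notin S}p_i$ you anticipate is the paper's fiber size for the paper's $S$, not for yours.  (Concretely: $|S \cup T|$ in the formula counts factors in which \emph{some} cycle of $u_i$ is colored $0$; with your $S$ you have $|S \cup T| = |T|$, which omits the case where the only color-$0$ cycle is a weight-$0$ copy of $(n)$.)

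To make your route work you must either redefine $S$ to match the paper's (record where $(n)$ is colored $0$, whether by force of nonzero weight or by choice), or carry out the fiber count honestly with your definitions---which will produce different coefficients over a smaller index set---and then separately argue that the resulting expression equals the stated one.  The first option is much cleaner, and once you adopt it the two-step projection $G(d,d,n) \to \Symm_n \to \Symm_{n-1}$ becomes the natural way to organize the count: the first step handles the weights (yielding $d^{n(k-1)-|S\cup T|+1}$), and the second step handles the color on $(n)$ for $i \notin S$ (yielding $\prod_{i\notin S} p_i$).
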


\begin{proof}
We use the same projection as in the previous two cases.  Consider a colored nontransitive factorization $u_1 \cdots u_k$ of the Coxeter element $c_{(d, d, n)}$ in $G(d, d, n)$, in which factor $u_i$ uses a color set of size $dp_i + 1$ and every $d$-strip is used at least once.  Since the factorization is nontransitive, $n$ is a fixed point in each $\pi_i$, i.e., $(n)$ is a cycle.  Let $S$ be the subset of $[k]$ recording factors in which the cycle $(n)$ has color $0$, and let $T$ be the subset of $[k]$ recording factors in which there is a cycle of color $0$ that is not the cycle $(n)$.  Then the image under projection is a colored nontransitive factorization of $\cnn$ in $\Symm_n$ using $\pp + \ee_{S \cup T}$ colors.

We have two calculations to make: computing the number of preimages of each of these $\Symm_n$-factorizations under projection, and counting the total number of $\Symm_n$-factorizations in the image.  We begin by some basic observations about the possibilities for $S$ and $T$.

First, $S$ must be nonempty: the standard basis vector $e_n$  is an eigenvector for all of $u_1, \ldots, u_k$ and $c_{(d, d, n)}$, and its eigenvalue for $c_{(d, d, n)}$ is not $1$; therefore its eigenvalue cannot be $1$ for all of the $u_i$.  Thus there must be at least one $i$ for which the singleton cycle $(n)$ has nonzero weight in $u_i$, and consequently has color $0$.  Thus for this $i$ we have $i \in S$.  In fact, we can say more: since $u_i \in G(d, d, n)$, the remaining cycles of $u_i$ have weights that sum to something nonzero, so at least one of them must have nonzero weight and hence color $0$, as well, and thus $i \in S \cap T$.

Now suppose that $S, T$ are subsets of $[k]$ with nonempty
intersection and $\pi_1 \cdots \pi_k$ is a colored factorization of
the form just described.  That is, it is a colored nontransitive
factorization of the $(n - 1)$-cycle $\cnn$ in which the cycles of
factor $\pi_i$ are colored with color set $\{1, \ldots, p_i\}$ if $i
\not \in S \cup T$ or with color set $\{0, 1, \ldots, p_i\}$ if $i \in
S \cup T$, every color being used at least once, and moreover in the latter case the cycle $(n)$ is colored with color $0$ if and only if $i \in S$, and some other cycle is colored with color $0$ if and only if $i \in T$. 

We proceed as in the previous arguments, aiming first to give a
factorization of any element $c^*$ that is a conjugate of $c_{(d, d,
  n)}$ by a diagonal matrix.  Choose a fixed index $i$ in $S \cap T$.
Order the $nk$ weights to be assigned as follows: first the weights for
every $u_j$ with $j \not \in S\cup T$; then the weights for every
$u_j$ with $j \in (S \cup T) \smallsetminus \{i\}$; then the weights
for $u_i$, arranged so that the last weight is in a cycle of color $0$
that is \emph{not} the fixed point.  (Such a choice is possible by the
definition of $S$ and $T$.)  In this order, every weight can be chosen
freely in $\ZZ/d\ZZ$ (giving $d$ choices), with the following exceptions:
\begin{enumerate}
\item if $j \not \in S \cup T$, every cycle is colored with a nonzero color, so has weight $0$; so the last element (in the total order) in each cycle is chosen with the unique weight that works;
\item if $j \in (S \cup T) \smallsetminus \{i\}$ and the weight being chosen is the last element (in the total order) in a cycle of nonzero color, it is assigned the unique value that gives its cycle weight $0$;
\item if $j \in (S \cup T) \smallsetminus \{i\}$ and the weight being chosen is the last element (in the total order) among those that appear in cycles of color $0$, it is assigned the unique value so that the weight of $u_j$ is $0$;
\item if $j = i$ and the weight being chosen is the weight of the fixed point, it is assigned the unique value so that the weight of the fixed point in the product $u_1 \cdots u_k$ is $-1$;
\item if $j = i$ and the weight being chosen is the last element (in the total order) in a cycle of nonzero color, it is assigned the unique value that gives its cycle weight $0$; and
\item if $j = i$ and the weight being chosen is the very last element
  in the total order, it is assigned the unique value that gives $u_j$ weight $0$.
\end{enumerate}
The number of values in (1), (2), and (5) together is exactly the number of cycles with nonzero color, while the number of values in (3), (4), and(6) together is exactly $|S \cup T| + 1$. 
Therefore, the total contribution from the weighting is 
$d^{nk - \# \textrm{nonzero colored cycles} - |S \cup T| - 1}$.  In addition, each cycle with nonzero color gets assigned one of $d$ colors from its strip, hence we must multiply by $d^{\# \textrm{nonzero colored cycles}}$.  Finally, the number of conjugates of $c_{(d, d, n)}$ by diagonal matrices is $d^{n - 2}$.  Thus, the number of preimages of a factorization with the given pair $(S, T)$ is exactly 
\[
d^{nk - \#(\textrm{nonzero colored cycles}) - |S \cup T|  - 1} \cdot d^{\# \textrm{nonzero colored cycles}} \cdot d^{2 - n}
= d^{n(k - 1) - |S \cup T| + 1}.
\]

Now we must compute the actual size of the image, that is, how many $\Symm_n$-factorizations are in the projection of $G(d, d, n)$-factorizations (with the sets $S, T$ fixed).  To do this, we further project factorizations of $\cnn$ in $\Symm_n$ to factorizations of the $(n - 1)$-cycle $\cn_{n - 1}$ in $\Symm_{n - 1}$, by removing the fixed point.  In this projection, there will be some subset $U \subseteq [k] \smallsetminus S$ of indices in which the (necessarily nonzero) color used to color the fixed point $(n)$ is the unique appearance of that color; thus, the image of this projection belongs to $\CCC^{\langle n - 1 \rangle}_{\pp + \ee_T - \ee_U}$.  Moreover, the number of preimages of each factorization is easily seen to be $\prod_{i \in [k] \smallsetminus S} p_i$, since we must choose which of the original nonzero colors was attached to the fixed point (whether that involves choosing an existing color or adding a new color and possibly relabeling).

Combining these two projections, we have that for fixed $S, T$, the number of nontransitive colored factorizations in $G(d, d, n)$ is
\[
d^{n(k - 1) + 1} \sum_{\substack{S, T, U \subseteq [k]: \\ S \cap T \neq \varnothing \\ S \cap U = \varnothing}}
d^{- |S \cup T|} \cdot \prod_{i \not \in S} p_i \cdot
C^{\langle n - 1 \rangle}_{\pp + \ee_T - \ee_U},
\]
as claimed.
\end{proof}

\begin{remark} \label{rem: algebraic non transitive}
As in Section~\ref{sec:G(d, 1, n)}, we have that Lemma~\ref{(d, d, n)
  nontransitive lemma} gives the coefficient for the polynomial
counting nontransitive factorizations of the Coxeter element when
expressed in the basis $\frac{(x - 1)^{(d)}_p}{d^p p!}$.  Obviously,
though, these coefficients are much messier (involving a triple sum
over subsets) than those of earlier sections.  In principle, it is
possible to express this polynomial in other bases; however, all
expressions we have produced in a single basis seem inherently
complicated.  Using the algebraic approach (with
Lemma~\ref{frobenius}, building on the arguments in
\cite[\S5]{ChapuyStump}), we have been able to produce the expression
\[
\frac{|G|^{k-1}}{n^{k-1}} \sum_{p_1,\ldots,p_k \geq 1}
 M^{n-2}_{p_1-1,\ldots,p_k-1} \left(  \sum_{S\subseteq [k]}
  \left((d-1)^{k-|S|} - (-1)^{k-|S|}\right) \prod_{i\in S} \frac{x_i
  \poly{p_i}{d}{x_i}}{d^{p_i} p_i!} \prod_{i\in \overline{S}}
  \frac{(x_i-1)_{p_i-1}^{(d)}}{d^{p_i} (p_i-1) !}  \right)
\]
for the generating function for all nontransitive factorizations. This
expression can be rewritten in many ways, for instance as
\begin{equation} \label{eq: algebraic non transitive}
\frac{|G|^{k-1}}{n^{k-1}} \sum_{p_1,\ldots,p_k \geq 1}
 M^{n-2}_{p_1-1,\ldots,p_k-1} 
 \left( \prod_{i = 1}^k Q_{p_i}(x_i)
-
 \prod_{i = 1}^k Q'_{p_i}(x_i) \right),
\end{equation}
where
$Q_p(x) := \left(x^2 - (d - 1)(p - 1)x + p(d - 1)\right) \frac{(x - 1)^{(d)}_{p - 1}}{d^{p} p!}$ and
$Q'_p(x) := \left(x^2 - (d - 1)(p - 1)x - p\right) \frac{(x - 1)^{(d)}_{p - 1}}{d^{p} p!}$. 
Frustratingly, however, we have not been able to derive this formula directly from Lemma~\ref{(d, d, n) nontransitive lemma}.
\end{remark}

\section{Exceptional complex reflection groups}
\label{sec:exceptional}

In this section, we record some tantalizing data that suggests that Theorems~\ref{G(d, 1, n) many factors theorem} and~\ref{thm:mainG(d,d,n) k factors} could be particular cases of a more general, uniform statement, along the lines of the Chapuy--Stump result (Theorem~\ref{thm:cs}).  

In all sections below, we use the following fixed notations: $G$ represents an irreducible well generated complex reflection group; $k$ represents a positive integer; $\rr = (r_1, \ldots, r_k)$ and $\pp = (p_1, \ldots, p_k)$ represent tuples in $\{0, 1, \ldots, n\}^k$, where $n$ is the rank of $G$; $a_{\rr}$ represents the number of factorizations of a fixed Coxeter element $c$ in $G$ as a product $u_1 \cdots u_k = c$ where $\dim \fix u_i = r_i$; and
\[
F_G(x_1, \ldots, x_k) := \sum_{\rr} a_{\rr} \cdot x_1^{r_1} \cdots x_k^{r_k}.
\]

\subsection{Rank two}
There are two infinite families of irreducible rank-$2$ well generated complex reflection groups, the wreath products $(\ZZ/d\ZZ) \wr \Symm_2$ (of type $G(d, 1, 2)$) and the dihedral groups (of type $G(d, d, 2)$), as well as twelve exceptional groups (Shephard--Todd classes $G_4$, $G_5$, $G_6$, $G_8$, $G_9$, $G_{10}$, $G_{14}$, $G_{16}$, $G_{17}$, $G_{18}$, $G_{20}$, and $G_{21}$).  For each such group, define the polynomials
\[
P_2(x) = \frac{(x - e^*_1)(x - e^*_2)}{|G|}, \qquad
P_1(x) = \frac{x - 1}{d_1}, \qquad
P_0(x) = 1.
\]
In this basis, we have the following result, which should be compared with Theorem~\ref{G(d, 1, n) many factors theorem}.
\begin{theorem}
\label{thm:rank 2}
Let $G$ be an irreducible well generated complex reflection group of rank $2$, and let $a_{\rr}$, $F_G$, and $P_i$ be defined as above.  Then one has
\[
F_G(x_1, \ldots, x_k) = |G|^{k- 1} \sum_{\pp 
} M^2_{\pp} \cdot P_{p_1}(x_1) \cdots P_{p_k}(x_k).
\]
In the case $k = 2$, this may be written in the form
\[
F_G(x, y) = (x - 1)(x - e^*_2) + N(x - 1)(y - 1) + (y - 1)(y - e^*_2) + 2h(x - 1) + 2h(y - 1) + |G|
\]
where $h = d_2$ is the Coxeter number of $G$ and $N = \frac{2h}{d_1}$ is the number of reflections that can appear in a shortest reflection factorization of $c$.
\end{theorem}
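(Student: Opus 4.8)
The plan is to evaluate $F_G$ with the Frobenius formula (Lemma~\ref{frobenius}) and then match the answer to the claimed closed form. Since each set of elements of a fixed fixed-space dimension is a union of conjugacy classes, Lemma~\ref{frobenius} gives
\[
F_G(x_1,\ldots,x_k)=\frac{1}{|G|}\sum_{\lambda\in\Irr(G)}\dim(\lambda)\,\chi_\lambda(c^{-1})\prod_{i=1}^k g_\lambda(x_i),\qquad g_\lambda(x):=\frac{1}{\dim(\lambda)}\sum_{w\in G}\chi_\lambda(w)\,x^{\dim\fix(w)}.
\]
Because the identity is the only element with $\dim\fix=2$, every $g_\lambda$ is a \emph{monic} quadratic, so $F_G$ is a finite combination of products $\prod_i g_\lambda(x_i)$. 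Two of these polynomials are already known: the trivial character gives $g_{\mathrm{triv}}(x)=\sum_w x^{\dim\fix(w)}=(x-1+d_1)(x-1+d_2)$, and the determinant gives $g_{\det}(x)=\sum_w\det(w)x^{\dim\fix(w)}=(x-e^*_1)(x-e^*_2)$, by the two product formulas recalled in Section~\ref{sec:degrees and coexponents}.

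Next I would put the target into the same shape. Using the subset model of $M^2_\pp$ (pairs $(S_1,S_2)$ of proper subsets of $[k]$) and inclusion--exclusion on whether $S_1$ or $S_2$ equals $[k]$ yields the formal identity
\[
\sum_{\pp}M^2_\pp\prod_i P_{p_i}(x_i)=\prod_i\bigl(P_0+2P_1+P_2\bigr)(x_i)-2\prod_i\bigl(P_1+P_2\bigr)(x_i)+\prod_i P_2(x_i).
\]
Substituting the definitions of $P_0,P_1,P_2$ and simplifying with $|G|=d_1d_2$, the fact $e^*_1=1$, and the rank-$2$ duality relation $d_1+e^*_2=d_2+1=h+1$ (valid for every well generated rank-$2$ group, as one reads off the data of Section~\ref{sec:degrees and coexponents} for the families and the tables for the exceptionals), the three building blocks become $|G|(P_0+2P_1+P_2)=g_{\mathrm{triv}}$, $\ |G|(P_1+P_2)=g_{\mathrm{mid}}$, and $|G|P_2=g_{\det}$, where $g_{\mathrm{mid}}(x):=(x-1)(x-1+d_1)$. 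Hence the general-$k$ theorem is equivalent to the character identity
\[
\sum_{\lambda\in\Irr(G)}\dim(\lambda)\,\chi_\lambda(c^{-1})\prod_i g_\lambda(x_i)=\prod_i g_{\mathrm{triv}}(x_i)-2\prod_i g_{\mathrm{mid}}(x_i)+\prod_i g_{\det}(x_i).
\]

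The heart of the proof is verifying this identity, and I expect it to be the main obstacle. Since $g_{\mathrm{triv}},g_{\mathrm{mid}},g_{\det}$ are pairwise distinct (as $d_1,d_2\ge 2$ and $e^*_2\ge 1$), I would group the irreducibles by the value of $g_\lambda$ and reduce to the $k$-independent scalar claim that the net weight $\sum_{\lambda:\,g_\lambda=g}\dim(\lambda)\chi_\lambda(c^{-1})$ equals $1$ for $g=g_{\mathrm{triv}}$, equals $-2$ for $g=g_{\mathrm{mid}}$, equals $1$ for $g=g_{\det}$, and equals $0$ for every other monic quadratic. This is where uniformity breaks down: the mass $-2$ on $g_{\mathrm{mid}}$ is assembled from genuinely different representations in different groups. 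For $G(d,1,2)$ the contributing characters are all linear (the two-dimensional induced characters vanish at $c$), and the weight $-2$ comes from the nontrivial linear characters via $\sum_{a\ne 0}\zeta^{-a}=-1$; for the dihedral groups $G(d,d,2)$ with $d$ odd it comes entirely from the two-dimensional reflection characters $\rho_\ell$ (each with $g_{\rho_\ell}=g_{\mathrm{mid}}$ and $\sum_\ell 2\chi_{\rho_\ell}(c^{-1})=-2$), whereas for $d$ even it comes from the two extra linear characters, the $\rho_\ell$ now summing to $0$. I would therefore carry out the net-weight verification family-by-family from the explicit character theory of wreath products and dihedral groups (equivalently, by specializing Theorems~\ref{G(d, 1, n) many factors theorem} and~\ref{thm:mainG(d,d,n) k factors} to $n=2$, restoring the nontransitive factorizations in the $G(d,d,2)$ case), and check the twelve exceptional rank-$2$ groups directly from their character tables.

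Finally, the explicit $k=2$ statement follows by specializing the general formula: with $M^2_{p,q}=\binom{2}{p;q;2-p-q}$ only the terms $(p,q)\in\{(0,0),(1,0),(0,1),(2,0),(0,2),(1,1)\}$ survive, and substituting $P_0=1$, $P_1(x)=(x-1)/d_1$, $P_2(x)=(x-e^*_1)(x-e^*_2)/|G|$ together with $e^*_1=1$, $h=d_2$, $|G|=d_1d_2$, and $N=2h/d_1=2|G|/d_1^2$ collects precisely into the six displayed terms.
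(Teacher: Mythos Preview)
Your proposal is correct and follows essentially the same approach as the paper. For the exceptional groups the paper does exactly what you describe: it computes each $f_\chi=g_\lambda$, expresses it in the $P_i$ basis, and combines them via Frobenius, arriving at precisely your three-term identity $\prod(P_2+2P_1+P_0)-2\prod(P_2+P_1)+\prod P_2$ (this is equation~\eqref{last step} in the paper). Your observation that $|G|(P_0+2P_1+P_2)=g_{\mathrm{triv}}$, $|G|(P_1+P_2)=g_{\mathrm{mid}}$, $|G|P_2=g_{\det}$ via the well-generated duality $d_1+e^*_2=h+1$ is a clean uniform framing that the paper leaves implicit. The one substantive difference is the treatment of the infinite families: the paper invokes Theorems~\ref{G(d, 1, n) many factors theorem} and~\ref{thm:mainG(d,d,n) k factors} (for $G(d,d,2)$ it splits into transitive and nontransitive pieces and reassembles), whereas you propose either specializing those theorems or verifying the net-weight claim directly from the wreath-product and dihedral character tables; both routes work.
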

\begin{proof}
We handle the two infinite families and the exceptional cases
separately:

\subsubsection*{The infinite family $G(d, 1, 2)$} For this group this is exactly the statement of Theorem~\ref{G(d, 1, n) many factors theorem}.  

\subsubsection*{The infinite family $G(d,d,2)$} For $G(d, d, 2)$, the basis $\{P_2(x), P_1(x), P_0(x)\}$ here is related to the basis $\{\poly{2}{d}{x}, \poly{1}{d}{x}, \poly{0}{d}{x}\}$ of Theorem~\ref{thm:mainG(d,d,n) k factors} by the equations
$\frac{\poly{2}{d}{x}}{d^1 \cdot 1!} = 
2 \cdot P_2(x)$, $\poly{1}{d}{x} = 
2P_1(x) + P_0(x)$, and $\poly{0}{d}{x} = P_0(x)$.  So from
Theorem~\ref{thm:mainG(d,d,n) k factors}, the contribution to $F_G$ of the transitive
factorizations is
\[
\frac{|G|^{k - 1}}{2^k} \sum_{\pp : p_i \geq 1} M^2_{\pp} \prod_{i : p_i = 2} 2 P_2(x_i) \prod_{i: p_i = 1} (2P_1(x_i) + P_0(x_i)).
\]
If $\rr$ is a permutation of the multiset $\{ 2^a, 1^b, 0^c\}$, then the coefficient of $\prod_i P_{r_i}(x_i)$ in this polynomial is $\frac{|G|^{k - 1}}{2^k} \cdot 2^a \cdot 2^b \cdot 1^c M^2_{2^a, 1^{b + c}} = \frac{|G|^{k - 1}}{2^c}M^2_{2^a, 1^{b + c}}$.  

For the contribution to $F_G$ from the nontransitive factorizations, we use the expression in \eqref{eq: algebraic non transitive}. The only contribution we
get is for the tuple ${\bf p}={\bf 1}$, where $M^0_{\bf 0} = 1$.  We obtain
\[
\frac{|G|^{k-1}}{2^{k-1}} \left(\prod_{i=1}^k Q_1(x_i) - \prod_{i=1}^k Q'_1(x_i)\right),
\]
where $Q_1(x) = \frac{x^2+d-1}{d} =
2P_2(x) + 2P_1(x)+1$ and $Q'_1(x) =  \frac{x^2-1}{d} = 2P_2(x) +
2P_1(x)$. If  $\rr$ is a permutation of the multiset $\{2^a,1^b, 0^c\}$, then the coefficient
of $\prod_i P_{r_i}(x_i)$ in this polynomial is $d^{k-1} 2^{a+b}= 2 \cdot \frac{|G|^{k - 1}}{2^c}$ if
$c>0$ and $0$ otherwise.

Finally, we sum these two contributions: when $c = 0$, the transitive case contributes exactly the desired $|G|^{k - 1} M^2_{\rr}$ while the nontransitive case contributes $0$.  From the explicit formula \eqref{definition M coeff} for $M^n_{\pp}$ we can see that when $c > 0$, $M^2_{2^a, 1^{b + c}} = 2^{b + c} - 2 = 2^c M^2_{\rr} - 2$, so in this case the transitive case contributes $|G|^{k - 1} M^2_{\rr} - 2 \frac{|G|^{k - 1}}{2^c}$ while the nontransitive case contributes the missing $2 \cdot \frac{|G|^{k - 1}}{2^c}$.

\subsubsection*{Exceptional rank $2$ cases} In each of the exceptional
cases, we use the algebraic technique outlined in
Section~\ref{sec:Frobenius} together with character tables for the
exceptional groups available in  Sage \cite{sage} via its interface with the GAP \cite{GAP} package Chevie, as follows: for each group $G$ and each irreducible character $\chi$ for $G$ such that $\chi(c^{-1}) \neq 0$, we compute the \emph{character polynomial}
\[
f_\chi(x) := \sum_{g \in G} \normchi(g) \cdot x^{\dim \fix(g)},
\]
where $\normchi = \chi/\chi(1)$ is the normalized character.  (Tables of these polynomials are collected in Appendix~\ref{app:tables of character polynomials}.) Then by Lemma~\ref{frobenius} we have that
\begin{align}
F_G(x_1, \ldots, x_k) &= \sum_{\rr} \frac{1}{|G|} \sum_{\lambda \in \Irr(G)} \dim(\lambda) \chi_\lambda(c^{-1}) \prod_{i = 1}^k \left(x_i^{r_i} \sum_{g \colon \dim \fix(g) = r_i} \normchi_\lambda(g) \right) \notag \\
& = \frac{1}{|G|}  \sum_{\lambda \in \Irr(G)} \dim(\lambda) \chi_\lambda(c^{-1}) \prod_{i = 1}^k f_{\chi_\lambda}(x_i).
\label{exceptional G_6}
\end{align}
Next we express each character polynomial in the $P_i$s and extract the coefficient.

For example, in the group $G_6$ (see Table~\ref{table:G_6}), with coexponents $1, 9$, degrees $4, 12$, and order $|G_6| = 4 \cdot 12 = 48$, there are $12$ characters $\chi$ for which $\chi(c^{-1}) \neq 0$, but only four different character polynomials:
\begin{center}
\begin{tabular}{lp{.3in}l}
$x^2 + 14x + 33$ && (associated only to the trivial representation), \\
$x^2 -10x + 9$ && (associated to two $1$-dimensional representations), \\
$x^2 - 4x + 3$ && (associated to two $2$-dimensional representations), and \\
$x^2 + 2x - 3$  && (associated to three $1$-dimensional and four $2$-dimensional representations).
\end{tabular}
\end{center}
In terms of the basis $P_2(x) = \frac{(x - 1)(x - 9)}{4 \cdot 12}$, $P_1(x) = \frac{x - 1}{4}$, $P_0(x) = 1$, these can be rewritten respectively as
\[
48(P_2 + 2P_1 + P_0), \qquad 48 P_2, \qquad 24(2P_2 + P_1), \qquad \textrm{ and } \qquad 48(P_2 + P_1).
\]
The respective character values $\chi(c^{-1})$ that appear with these
polynomials are $\{1\}$; $\{\exp(2\pi i / 6), \exp(5\pi i/3)\}$; $\{
\pm i\}$; and $\{-1, \exp(2\pi i/3), \exp(2 \pi i \cdot 2/3)\}$ (for
the $1$-dimensional representations) and $\{\exp(2
\pi i / 12), \exp(2 \pi i \cdot 5/12), \exp(2 \pi i \cdot 7/12),
\exp(2 \pi i \cdot 11/12)\}$ (for the $2$-dimensional representations).  Plugging these in to \eqref{exceptional G_6} yields
\begin{align}
\notag
F_{G_6}(x_1, \ldots, x_k) & = \frac{1}{|G_6|} \left(
1 \cdot 1 \cdot \prod_{i = 1}^k 48(P_2(x_i) + 2P_1(x_i) + P_0(x_i)) + 
1 \cdot 1 \cdot \prod_{i = 1}^k 48P_2(x_i) + {}\right. \\\notag
& \hspace{1in} \left. 2 \cdot 0 \cdot \prod_{i = 1}^k 24(2P_2(x_i) + P_1(x_i)) +
(1 \cdot (-2) + 2 \cdot 0) \cdot \prod_{i = 1}^k 48(P_2(x_i) + P_1(x_i))
\right) \\\label{last step}
& = |G_6|^{k - 1} \left(\prod_{i = 1}^k (P_2(x_i) + 2P_1(x_i) + P_0(x_i)) - 2\prod_{i = 1}^k (P_2(x_i) + P_1(x_i)) + \prod_{i = 1}^k P_2(x_i)\right).
\end{align}
Finally, the coefficient of $\prod_{i = 1}^k P_{r_i}(x_i)$ in the expansion of this expression is (up to the power of $|G_6|$) the same as the coefficient of $\prod_{i = 1}^k x_i^{r_i}$ in the polynomial 
\[
\prod_{i = 1}^k (x_i^2 + 2x_i + 1) - 2\prod_{i = 1}^k (x_i^2 + x_i) + \prod_{i = 1}^k x_i^2 = \left( \prod_{i = 1}^k (x_i + 1) - \prod_{x = 1}^k x_i \right)^2,
\]
and comparing the last expression with the definition of $M^n_{\pp}$ in \eqref{definition M coeff} gives the result.  

The calculations in the other eleven cases are analogous.
\end{proof}

\subsection{Rank three}
\label{sec:rank 3}

There are two infinite families of well generated irreducible complex
reflection groups of rank $3$ (the wreath product $(\ZZ/d\ZZ) \wr
\Symm_3$ of type $G(d, 1, 3)$ and its weight-zero subgroup $G(d, d,
3)$) and five exceptional groups (Shephard--Todd classes $G_{23}$
(also the Coxeter group of type $H_3$), $G_{24}$, $G_{25}$, $G_{26}$,
and $G_{27}$).  Our next result covers all of these groups
\emph{except} $G_{25}$. The choice of polynomial basis assigned to
each group is discussed further in Remark~\ref{Orlik--Terao remark}.

\begin{definition} \label{funny polynomials}
For $G$ of type $G(d, 1, 3)$ or $G_{23}$ or $G_{26}$, define for $i=0,1,2,3$ the polynomials
\[
P^G_i(x) = \prod_{j = 1}^i \frac{x - e^*_j}{d_j}.
\]
For $G$ of type $G(d, d, 3)$, define
\begin{multline*}
P^{G(d, d, 3)}_0(x) = 1, \quad P^{G(d, d, 3)}_1(x) = \frac{(d + 1)(x
  - 1)}{3d}  \left(= \frac{d + 1}{d} \cdot \frac{x - 1}{d_1} \text{ for }
d\geq 3\right), \\ P^{G(d, d, 3)}_2(x) = \frac{(x - 1)(x - d)}{3d}, \qquad \textrm{and} \qquad P^{G(d, d, 3)}_3(x) = \frac{(x - 1)(x - d - 1)(x - 2d + 2)}{6d^2} = \prod_{i = 1}^3 \frac{x - e^*_i}{d_i}.
\end{multline*}
For $G$ of type $G_{24}$, define
\begin{multline*}
P^{G_{24}}_0(x) = 1, \qquad P^{G_{24}}_1(x) = \frac{x - 1}{3} = \frac{4}{3} \cdot \frac{x - 1}{d_1}, \qquad P^{G_{24}}_2(x) = \frac{(x - 1)(x - 7)}{24} = \frac{(x - 1)(x - 7)}{d_1d_2}, \\ \textrm{and} \qquad P^{G_{24}}_3(x) = \frac{(x - 1)(x - 9)(x - 11)}{336} = \prod_{i = 1}^3 \frac{x - e^*_i}{d_i}.
\end{multline*}
For $G$ of type $G_{27}$, define
\begin{multline*}
P^{G_{27}}_0(x) = 1, \qquad P^{G_{27}}_1(x) = \frac{2(x - 1)}{9} = \frac{4}{3} \cdot \frac{x - 1}{d_1}, \qquad P^{G_{27}}_2(x) = \frac{(x - 1)(x - 15)}{72} = \frac{(x - 1)(x - 15)}{d_1d_2}, \\ \textrm{and} \qquad P^{G_{27}}_3(x) = \frac{(x - 1)(x - 19)(x - 25)}{2160} = \prod_{i = 1}^3 \frac{x - e^*_i}{d_i}.
\end{multline*}
\end{definition}

\begin{proposition}
\label{rank 3 best}
If $G$ is of type $G(d, 1, 3)$, $G(d,d,3)$, $G_{23}$, $G_{24}$ , $G_{26}$ or $G_{27}$, then
\[
F_G(x_1, \ldots, x_k) = |G|^{k - 1} \sum_{\pp} M^3_{\pp} \cdot P^G_{p_1}(x_1) \cdots P^G_{p_k}(x_k).
\]
\end{proposition}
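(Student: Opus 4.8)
The plan is to prove Proposition~\ref{rank 3 best} by treating the two infinite families and the three exceptional groups by separate methods, exactly paralleling the structure of the rank-$2$ argument in Theorem~\ref{thm:rank 2}.

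For the family $G(d, 1, 3)$, the statement is literally the case $n = 3$ of Theorem~\ref{G(d, 1, n) many factors theorem}, once we check that the basis $P^{G(d,1,3)}_i(x) = \prod_{j=1}^i \frac{x - e^*_j}{d_j}$ agrees with the basis $\frac{(x-1)^{(d)}_i}{d^i i!}$ appearing there. Since $e^*_j = 1 + d(j-1)$ and $d_j = dj$ for this group, one has $\prod_{j=1}^i (x - e^*_j) = (x-1)^{(d)}_i$ and $\prod_{j=1}^i d_j = d^i\, i!$, so the two bases coincide and no new work is needed.

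For the family $G(d, d, 3)$, I would combine the transitive and nontransitive contributions, as in the $G(d,d,2)$ subcase of Theorem~\ref{thm:rank 2}. First I would record the change-of-basis identities expressing each $\poly{p}{d}{x}$ (for $p = 1, 2, 3$) in the basis $\{P^{G(d,d,3)}_i\}$; these are linear relations with coefficients that are elementary in $d$, derived from \eqref{eq: equivalent expressions G(d,d,n) basis}. Substituting these into the $n = 3$ case of Theorem~\ref{thm:mainG(d,d,n) k factors} gives the transitive contribution to $F_G$ as an explicit combination of products $\prod_i P^G_{r_i}(x_i)$, with coefficients built from the $M^3_{\pp}$. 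Then I would compute the nontransitive contribution using the formula \eqref{eq: algebraic non transitive}, specialized to $n = 3$, and re-express the polynomials $Q_p$ and $Q'_p$ in the $P^G_i$ basis. The proposition then follows by adding the two contributions and checking, coefficient by coefficient in each multiset type $\{2^a, 1^b, 0^c\}$ (now with the extra value $3$ available), that the total equals $|G|^{k-1} M^3_{\rr}$ --- the nontransitive terms supplying precisely the correction that the transitive terms are missing whenever the fixed point is isolated. The bookkeeping here is the analogue of the $c > 0$ vs.\ $c = 0$ case split in the rank-$2$ proof.

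For the three exceptional groups $G_{23}$, $G_{24}$, $G_{26}$, $G_{27}$, I would use the algebraic approach of Lemma~\ref{frobenius} exactly as in the exceptional rank-$2$ cases: for each group, extract from the Chevie character tables (Appendix~\ref{app:tables of character polynomials}) the character polynomials $f_\chi(x) = \sum_{g \in G} \normchi(g)\, x^{\dim \fix(g)}$ for the characters with $\chi(c^{-1}) \neq 0$, rewrite each $f_\chi$ in the prescribed basis $\{P^G_i\}$, and substitute into \eqref{exceptional G_6}. The key computation in each case is to verify that, after substitution, the resulting polynomial in the $P^G_i$ matches $\bigl((1+x_1)\cdots(1+x_k) - x_1 \cdots x_k\bigr)^3$ under the correspondence $P^G_{r_i} \leftrightarrow x_i^{r_i}$, which by \eqref{definition M coeff} is exactly $\sum_{\pp} M^3_{\pp} \prod_i P^G_{p_i}(x_i)$. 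I expect the main obstacle to be the verification for $G(d, d, 3)$: unlike the finite exceptional computations, which reduce to a fixed finite linear-algebra check per group, the $G(d,d,3)$ case requires tracking the $d$-dependence through both the transitive and nontransitive pieces and confirming that the messy nontransitive correction from \eqref{eq: algebraic non transitive} collapses to give clean $M^3$ coefficients in the chosen basis for all $d$ simultaneously. This is why the proposition deliberately omits $G_{25}$ --- presumably no single polynomial basis of the required product form exists there --- and the careful choice of the non-product-form polynomial $P^{G(d,d,3)}_1(x) = \frac{(d+1)(x-1)}{3d}$ is exactly what makes the family fit the uniform template.
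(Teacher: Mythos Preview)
Your plan is correct and follows essentially the same approach as the paper's own proof: the $G(d,1,3)$ case is literally Theorem~\ref{G(d, 1, n) many factors theorem}, the $G(d,d,3)$ case combines the transitive piece from Theorem~\ref{thm:mainG(d,d,n) k factors} with the nontransitive piece from \eqref{eq: algebraic non transitive} via the indicated change of basis and a case split on the multiset type $\{3^a,2^b,1^c,0^e\}$, and the four (not three) exceptional groups are handled by the Frobenius/character-table computation exactly as in the rank-$2$ argument.
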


\begin{proof}

We again proceed case-by-case.

\subsubsection*{The infinite family $G(d, 1, 3)$} 
For this group this is exactly the statement of Theorem~\ref{G(d, 1, n) many factors theorem}. 

\subsubsection*{The infinite family $G(d, d, 3)$}

Write $P_i$ for the polynomial $P^{G(d, d, 3)}_i$ of Definition~\ref{funny polynomials}.  We begin with the contribution from the transitive factorizations.  The formulas $\frac{\poly{3}{d}{x}}{2d^2} = 3 P_3(x)$, $\frac{\poly{2}{d}{x}}{d} = 3P_2(x) + \frac{3}{d + 1} P_1(x)$, and $\poly{1}{d}{x} = \frac{3d}{d + 1}P_1(x) + P_0(x)$ express the basis of Theorem~\ref{thm:mainG(d,d,n) k factors} in terms of this basis. 
Thus, for a sequence $\rr$ that is a rearrangement of $\{3^a, 2^b, 1^c, 0^e\}$ with $a + b + c + e = k$, the coefficient of $\prod_i P_{r_i}(x_i)$ in
\[
\frac{|G|^{k-1}}{3^k}\sum_{p_1,\ldots,p_k \geq 1}
 M^3_{p_1,\ldots,p_k} \frac{\poly{p_1}{d}{x_1}}{d^{p_1-1} (p_1-1)!} \cdots 
 \frac{\poly{p_k}{d}{x_k}}{d^{p_k-1} (p_k-1)!}
\]
is
\begin{align}
T(\rr) & := \frac{|G|^{k-1}}{3^k} \cdot 3^a \cdot 3^b \cdot \sum_{S \subseteq [c]} M^3_{(1^c, 3^a, 2^b, 1^e) + \ee_S} \left(\frac{3}{d + 1}\right)^{|S|} \left(\frac{3d}{d + 1}\right)^{c - |S|} \notag \\
&= \frac{|G|^{k-1}}{3^k} \cdot \frac{3^{a + b + c}}{(d + 1)^c}
\sum_{S \subset [c]} M^3_{(1^c, 3^a, 2^b, 1^e) + \ee_S} d^{c - |S|}. \label{eq: contribution G(d,d,3) transitive}
\end{align}
Next, we simplify using the formula 
\[
M^3_{3^a,2^b,1^c,0^e} = \begin{cases}
3^{b+c} &\text{ if } e>0\\
3^{b+c}-3\cdot 2^b & \text{ if } e=0, c>0\\
3^{b}-3\cdot 2^b+3 & \text { if } c = e=0, b>0\\
0 & \text{ if } b = c = e = 0
\end{cases} 
\]
for $M^3_{\pp}$ derived from the explicit definition \eqref{definition M coeff} to obtain 
\begin{equation}
\label{eq:T}
T(\rr) = |G|^{k-1} \cdot 
\begin{cases}
  3^{b+c} - 3^{1-e} \cdot 2^b \cdot \left(\frac{d+2}{d+1}\right)^c & \text{ if } e >0 \\
  3^{b + c} - 3 \cdot 2^b \cdot \left(\frac{d + 2}{d + 1}\right)^c + \frac{3}{( d+ 1)^c}& \text{ if } e = 0, c>0\\
 3^{b} -3\cdot 2^b +3& \text{ if } c = e = 0, b>0\\
 0 & \text{ if } b = c = e = 0.
\end{cases}
\end{equation}

Now we calculate the nontransitive contribution using the expression in \eqref{eq: algebraic non transitive}. From this expression we get a
contribution for the tuples $\pp \in \{1, 2\}^k$ other than ${\bf 2} = (2, \ldots, 2)$, and for each such tuple we have $M^1_{\pp - {\bf 1}}=1$. Thus we obtain the generating function
\begin{equation} \label{eq:contribNTGdd3}
\frac{|G|^{k-1}}{3^{k-1}} \sum_{\pp \neq {\bf 2}, p_i \in \{1,2\}} \left(\prod_{i=1}^k Q_{p_i}(x_i) - \prod_{i=1}^k Q'_{p_i}(x_i)\right).
\end{equation}
We have the change of basis formulas
\begin{align*}
Q_1(x)  &= 3P_2(x) + 3P_1(x)+1, &
Q'_1(x) & =  \frac{x^2-1}{d} = 3P_2(x) + 3P_1(x), \\
Q_2(x) &= ({x}^{2}-dx+x+2\,d-2)(x-1)/(2d^2) & 
Q'_2(x) &= ({x}^{2}-dx+x-2)(x-1)/(2d^2) \\
& =3P_3(x)+3P_2(x)+\frac{3}{d+1}P_1(x), &
& =3P_3(x)+3P_2(x).
\end{align*}
For a sequence $\rr$ that is a rearrangement of $\{3^a, 2^b, 1^c,
0^e\}$ with $a + b + c + e = k$, let $N(\rr)$ denote the coefficient
of $\prod_i P_{r_i}(x_i)$ in \eqref{eq:contribNTGdd3}. Using the
change of basis formulas above, after some calculations we obtain the
following formula for $N({\bf r})$:
\begin{equation*}
N(\rr)  = |G|^{k-1} \cdot \begin{cases}
  2^b \cdot 3^{1-e} \cdot \left(\frac{d + 2}{d + 1}\right)^c   & \text{ if } e >0,\\
 3\cdot 2^b \left(\frac{d+2}{d+1}\right)^c - 3\cdot 2^b + \frac{3}{(d + 1)^c} & \text{ if } e =0, c>0,\\
 0 & \text { otherwise}.
\end{cases}
\end{equation*}
Combining this with \eqref{eq:T}, it is easy to see that $T(\rr)+N(\rr)$ equals $|G|^{k-1} M^3_{\rr}$ in each of the cases.

\subsubsection*{Exceptional rank $3$ cases}
In the four exceptional cases, we use exactly the same approach as in the proof of Theorem~\ref{thm:rank 2}.  In place of \eqref{last step}, one ends up in all four cases with the expression 
\begin{multline*}
|G|^{k- 1} \left( \prod_i (P_3(x_i) + 3P_2(x_i) + 3P_1(x_i) + 1) - 3\prod_i (P_3(x_i) + 2P_2(x_i) + P_1(x_i)) + {} \right. \\ \left. {} + 3\prod_i (P_3(x_i) + P_2(x_i)) - \prod_i P_3(x_i) \right),
\end{multline*}
from which the result follows.
\end{proof}

\begin{remark}
\label{Orlik--Terao remark}
The choice of basis polynomials $P_i$, especially for $G_{24}$ and $G_{27}$, appears somewhat mysterious, and indeed they were initially discovered experimentally.  However, the roots that appear are not arbitrary.  Given a complex reflection group $G$ acting on a space $V$, consider the intersection lattice $L$ of the reflecting hyperplanes of $G$, ordered by reverse-inclusion.  Let $\chi(L, x) := \sum_{X \in L} \mu(V, X) x^{\dim X}$ be the characteristic polynomial of $L$ (where $\mu$ is the poset-theoretic M\"obius function).  In \cite{OrlikSolomon1982, OrlikSolomon1983}, Orlik and Solomon show that $\chi(L, x)$ factors as $\prod_i (x - e^*_i)$.  But, in fact, they show more: if $X \in L$ is any particular intersection of reflecting hyperplanes, then the upper interval $L^X := [X, \{0\}]$ in $L$ (which is isomorphic to the lattice of the arrangement of intersections of the reflecting hyperplanes with $X$) satisfies $\chi(L^X, x) = \prod_{i = 1}^{\dim X} (x - b^X_i)$ for some positive integers $b^X_i$.  (Tables of these \emph{Orlik--Solomon coexponents} are collected in \cite[App.~C]{OrlikTerao}.)  For the group $G(d, 1, n)$, when $X$ is a subspace of dimension $m$, Orlik and Solomon show that $(b^X_i)_{1 \leq i \leq m} = (1, 1 + d, \ldots, 1 + (m - 1)d)$ depends only on the dimension of $X$.  Similarly, in the other irreducible groups of rank $2$ and $3$ (both exceptional and $G(d, d, n)$), the tuple of integers $(b^X_i)$ depends only on $\dim X$.  In fact, in all of these groups except $G(d, d, 3)$, $G_{24}$, and $G_{27}$, these Orlik--Solomon coexponents are a prefix of the sequence $(e^*_i)$ of coexponents.  In $G_{24}$, the coexponents are $(1, 9, 11)$ and the Orlik--Solomon coexponents in dimension $2$ are $(1, 7)$; in $G_{27}$, the coexponents are $(1, 19, 25)$ and the Orlik--Solomon coexponents in dimension $2$ are $(1, 15)$; and in $G(d, d, n)$, the coexponents are $(1, d + 1, 2d - 2)$ and the Orlik--Solomon coexponents in dimension $2$ are $(1, d)$.  One immediately notes the relationship to the polynomials in Definition~\ref{funny polynomials}.  (We do not have an explanation (or even an ``explanation'') for the factors $\frac{4}{3}$ and $\frac{d + 1}{d}$ that appear there.)
\end{remark}

\begin{remark}
For the group $G_{25}$, one can show that there are \emph{no} polynomials $P_i$
of degree $i$ for $i=0,1,2,3$ such that 
\[
F_{G_{25}}(x_1, \ldots, x_k) = |G|^{k - 1} \sum_{\pp} M^3_{\pp} \cdot P_{p_1}(x_1) \cdots P_{p_k}(x_k).
\]
Nevertheless, one can use the character tables to compute a formula for $F_{G_{25}}$: the group has coexponents $(e^*_1, e^*_2, e^*_3) = (1, 4, 7)$ and degrees $(d_1, d_2, d_3) = (6, 9, 12)$.  Taking $P_i(x) = \prod_{j = 1}^i \frac{x - e^*_i}{d_i}$, one has
\begin{multline*}
\frac{F_{25}(x_1, \cdots, x_k)}{|G_{25}|^{k - 1}} = \prod_{i = 1}^k(P_3(x_i) + 3P_2(x_i) + 3P_1(x_i) + 1) 
- 3 \cdot \prod_{i = 1}^k(P_3(x_i) + 2P_2(x_i) + P_1(x_i) )  \\ 
{} - 3 \cdot \prod_{i = 1}^k(P_3(x_i) + P_2(x_i)  )
- \prod_{i = 1}^k P_3(x_i)
- 3 \cdot \prod_{i = 1}^k(P_3(x_i) + P_2(x_i) + P_1(x_i) /3)    
+ 9 \cdot \prod_{i = 1}^k(P_3(x_i) + P_2(x_i) + P_1(x_i) /9),
\end{multline*}
and so the coefficients that appear in this expansion in place of the $M^3_\pp$ are the same as the coefficients in the expansion of the polynomial
\[
\prod_{i = 1}^k(x_i + 1)^3 - 3 \cdot \prod_{i = 1}^k x_i (x_i + 1)^2  - 3 \cdot \prod_{i = 1}^k x_i^2(x_i + 1  )  - \prod_{i = 1}^k x_i^3 - 3 \cdot \prod_{i = 1}^k(x_i^3 + x_i^2 + x_i /3)   + 9 \cdot \prod_{i = 1}^k(x_i^3 + x_i^2 + x_i /9) .
\]
\end{remark}

\subsection{Higher ranks}
\label{sec:exceptional higher ranks}

There are ten exceptional complex reflection groups of rank $4$ or
larger, of which all but one (the rank-$4$ group $G_{31}$) are well
generated.  Of these, only $G_{32}$ has the property that the sequence
of Orlik--Solomon coexponents $(b^X_i)_i$ is a function of the dimension of $X$ alone \cite[App.~C]{OrlikTerao}.  For this group, one sees from Table~\ref{table:G_32} that the terms that one would predict (based on Theorems~\ref{G(d, 1, n) many factors theorem} and~\ref{thm:rank 2} and Proposition~\ref{rank 3 best}) survive, but that other terms are present as well (as for $G_{25}$ above).  Moreover, for the eight other relevant groups, one sees in each case that there is not enough cancellation to give a result in the form of those just mentioned.  Section~\ref{sec:uniform} offers some open questions in this direction.

\section{Applications}
\label{sec:applications}

In this section, we discuss a number of corollaries of our results.
First, we show how to derive the result of Chapuy and Stump
(Theorem~\ref{thm:cs}) for the groups $G(d, 1, n)$ and $G(d, d, n)$
from Theorems~\ref{G(d, 1, n) many factors theorem}
and~\ref{thm:mainG(d,d,n) k factors}.  In the case of $G(d, 1, n)$,
this makes the proof fully elementary (that is, with no use of
representation theory); in the case of $G(d, d, n)$, it reduces this
question to an elementary proof of a result in the symmetric group: 
Theorem~\ref{S_n (n-1) cycle theorem k factors} (see Question~\ref{n - 1 cycle question}).  

Second, we use our results to extract highest-degree coefficients.
These count the so-called \emph{genus $0$ factorizations}, or
equivalently they count chains in the \emph{lattice of $G$-noncrossing
  partitions}.  For the group $G(d, d, n)$, this includes a new
result, extending work of Athanasiadis--Reiner and Bessis--Corran
\cite{AthanasiadisReiner, BessisCorran}.  As a corollary, we give a
variant characterization of the $G$-noncrossing partition lattice that
seems not to have appeared in the literature.

\subsection{Rederiving the Chapuy--Stump formula}

Suppose that, for a well generated complex reflection group $G$, one
has computed (in some form) the generating functions $F_G(x_1, \ldots,
x_\ell) = \sum_{\rr} a_{\rr} \cdot x_1^{r_1} \cdots x_\ell^{r_\ell}$
for the number of factorizations of a Coxeter element in $G$ by
fixed space dimension $r_i$ for each of the factors.  From this series, it is straightforward to extract the number of length-$\ell$ reflection factorizations of a Coxeter element in $G$: it is exactly the coefficient $[x_1^{n - 1} \cdots x_\ell^{n - 1}]F_G$.  In particular, suppose that the expression for $F_G$ is in terms of a basis $P_0, \ldots, P_n$ of polynomials with $\deg(P_i) = i$, so that
\[
F_G(x_1, \ldots, x_\ell) = \sum_{\pp} A_{\pp} \cdot P_{p_1}(x_1) \cdots P_{p_\ell}(x_\ell),
\]
and suppose that the degree-$(n - 1)$ coefficients of $P_n$ and $P_{n - 1}$ are $a$ and $b$, respectively.  From general considerations (either the algebraic formula, or, that the \emph{Hurwitz action} of the braid group provides explicit bijections), one has that the coefficient $A_{\pp}$ depends only on the multiset of values of the $p_i$, not on their order.
Therefore, the number $N_{\ell}$ of length-$\ell$ reflection factorizations of $c$ is
\[
N_\ell = [x_1^{n - 1} \cdots x_\ell^{n - 1}]F_G = \sum_{k = 0}^\ell \binom{\ell}{k} A_{\underbrace{\scriptstyle n - 1, \ldots, n - 1}_{\textstyle k}, \, \underbrace{\scriptstyle n, \ldots, n}_{\textstyle \ell - k}} a^{\ell - k} b^{k}.
\]
From this general framework, we now derive the main theorem of Chapuy
and Stump in many cases.
\begin{remark}
Suppose that $G$ is $G(d, 1, n)$ or is one of the irreducible well generated groups of rank $2$ or $3$, other than $G_{25}$; then by Theorem~\ref{G(d, 1, n) many factors theorem}, Theorem~\ref{thm:rank 2}, and Proposition~\ref{rank 3 best} we have that 
\[
A_\pp  = |G|^{\ell - 1} M^n_\pp, \qquad
a = -\frac{1}{|G|}\sum e^*_i = -\frac{|R^*|}{|G|}, \qquad
\textrm{ and }\qquad
 b  = \frac{1}{d_1 \cdots d_{n - 1}} = \frac{h}{|G|}.
 \]
Moreover, from \eqref{definition M coeff} we have $M^n_{n, \ldots, n} = 0$ and $M^n_{n - 1, \ldots, n - 1, n, \ldots, n} 
= \sum_{j = 0}^{n - 1} (-1)^j \binom{n}{j} (n - j)^k$,
and so for these groups
\begin{align*}
\sum_{\ell = 0}^\infty N_\ell \frac{t^\ell}{\ell!}  
& = \frac{1}{|G|} \sum_{j = 0}^{n - 1} (-1)^j \binom{n}{j} \sum_{\ell = 0}^\infty \frac{t^\ell}{\ell!} \sum_{k = 1}^\ell   \binom{\ell}{k}  (n - j)^k \left(-|R^*|\right)^{\ell - k} h^{k} \\
& = \frac{1}{|G|} \sum_{j = 0}^{n - 1} (-1)^j \binom{n}{j} \sum_{\ell = 0}^\infty \frac{t^\ell}{\ell!} \left( ((n - j)h - |R^*|)^\ell - \left(-|R^*|\right)^{\ell}\right) \\
& = \frac{1}{|G|} \sum_{j = 0}^{n - 1} (-1)^j \binom{n}{j} \left( \exp\left(((n - j)h - |R^*|)t\right) - \exp \left(-|R^*|t\right)\right) \\
& = \frac{1}{|G|} \left( \exp((nh - |R^*|)t/n) - \exp(- |R^*|t/n) \right)^n.
\end{align*}
Finally, using the general fact that $ |R^*| +  |R| = nh$, we recover
Theorem~\ref{thm:cs} for these groups.  In particular, in the case of
$G(d, 1, n)$, this gives a fully elementary proof of the
theorem.
\end{remark}
\begin{remark}
In the case of $G(d, d, n)$ for $n > 3$, the calculation in the preceding remark does not apply.  Nevertheless, since every reflection factorization of a Coxeter element in $G(d, d, n)$ is transitive, it is possible to recover the Chapuy--Stump formula for this group from Theorem~\ref{thm:mainG(d,d,n) k factors}.  Following the notation above, we have for this polynomial that $A_{\pp} = \frac{|G|^{k - 1}}{n^k} M^n_{\pp}$, $a = -\frac{1}{d^{n - 1}(n - 1)!}\sum e^*_i =  - \frac{n|R^*|}{|G|}$, and $b = \frac{1}{d^{n- 2}(n - 2)!} = \frac{n(n - 1)d}{|G|} = \frac{nh}{|G|}$, and so the calculation goes through in the same way after the cancellation of the $n$s.  The resulting argument is elementary except for the proof of Theorem~\ref{S_n (n-1) cycle theorem k factors} (see Question~\ref{n - 1 cycle question}).
\end{remark}

\subsection{Genus $0$ factorizations: chains in the poset of noncrossing partitions}

For any two linear transformations $U, T$ on a finite-dimensional vector space $V$, one has $\codim \fix(UT) \leq \codim \fix(U) + \codim\fix(T)$.  If $G$ is a well generated complex reflection group of rank $n$ and $c$ is a Coxeter element in $G$, it follows that in order for there to be a factorization $c = u_1 \cdots u_k$ of $c$ where $u_i$ has fixed space dimension $r_k$, one must have $(n - r_1) + \ldots + (n - r_k) \geq n$, or equivalently $r_1 + \ldots + r_k \leq n(k - 1)$.  Consequently, the polynomial $F_G(x_1, \ldots, x_k)$ counting factorizations of $c$ is of degree $n(k - 1)$, with top-degree coefficients counting factorizations $u_1 \cdots u_k = c$ such that $\codim \fix(u_1) + \ldots + \codim \fix(u_k) = \codim \fix(c) (= n)$.  Such factorizations are often said to have \emph{genus $0$} -- see Section~\ref{sec:maps} for a discussion of this topological terminology.  They also arise in the context of the \emph{$G$-noncrossing partition lattice}, which is the subject of the rest of this section.

A subadditive function on a group $G$ such as\footnote{In particular,
  one needs that the function takes nonnegative values and that $f(x)
  = 0$ if and only if $x$ is the identity.} $\codim \fix(-)$ gives
rise to a partial order $\leq$ on $G$, as follows: one defines $x \leq
y$ if $\codim\fix(x) + \codim \fix(x^{-1}y) = \codim\fix(y)$.  With
this definition, one has that for each fixed $g \in G$, genus-$0$
factorizations $u_1 \cdots u_k$ of $g$ are in bijection with (multi)chains $1 \leq g_1 \leq \ldots \leq g_k = g$ in the interval $[1, g]_{\leq}$ via the map $g_i := u_1 \cdots u_i$.  Thus, the top-degree coefficient $a_{r_1, \ldots, r_k}$ in $F_G(x_1, \ldots, x_k)$ also counts multichains in the interval $[1, c]_{\leq}$ whose successive rank-jumps are $n - r_1, \ldots, n - r_k$.  We now compute these numbers for $G(d, 1, n)$ and $G(d, d, n)$, as well as the \emph{zeta polynomial} $Z([1, c]_{\leq}, k)$ that counts all multichains in $[1, c]_{\leq k}$ of length $k$.  The result for $G(d, 1, n)$ is known.

\begin{cor}[{\cite[Prop.~7 and remark on p.~199]{Reiner}}] \label{corollary: genus 0 G(d,1,n)}
For $d > 1$, let $G = G(d, 1, n)$, and let $c$ be a Coxeter element in $G$.  Given nonnegative integers $s_1, \ldots, s_k$ with sum $n$, the number of chains in $[1, c]_{\leq}$ having rank-jumps $s_1, \ldots, s_k$ is 
\[
a^{(d)}_{n - s_1, \ldots, n - s_k} = \binom{n}{s_1} \cdots \binom{n}{s_k},
\]
with zeta polynomial 
\[
Z([1, c]_{\leq}, k) = \binom{nk}{n}.
\]
\end{cor}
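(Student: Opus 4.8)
The plan is to extract the top-degree coefficients of the generating function $F_G$ furnished by Theorem~\ref{G(d, 1, n) many factors theorem}. Writing $s_i := n - r_i$, the genus-$0$ condition $s_1 + \cdots + s_k = n$ says precisely that $x_1^{r_1} \cdots x_k^{r_k}$ is a monomial of the maximal total degree $n(k - 1)$ in $F_G$, so that $a^{(d)}_{r_1, \ldots, r_k}$ is a top-degree coefficient.

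First I would determine which terms on the right-hand side of Theorem~\ref{G(d, 1, n) many factors theorem} can contribute to such a monomial. Since $\frac{(x - 1)^{(d)}_p}{d^p p!}$ has degree $p$ with leading coefficient $\frac{1}{d^p p!}$, producing the factor $x_i^{r_i}$ forces $p_i \geq r_i$; on the other hand, $M^n_\pp = 0$ whenever $p_1 + \cdots + p_k > n(k - 1)$ (as noted after Proposition~\ref{prop:recurrence Ms}). Since $\sum_i r_i = n(k - 1)$, these two constraints together force $p_i = r_i$ for every $i$, and within that unique surviving term only the leading coefficients of the polynomials contribute. This gives
\[
a^{(d)}_{r_1, \ldots, r_k} = |G|^{k - 1} \, M^n_{r_1, \ldots, r_k} \prod_{i = 1}^k \frac{1}{d^{r_i} r_i!}.
\]

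Next I would evaluate $M^n_{r_1, \ldots, r_k}$. As $\sum_i r_i = n(k - 1)$, this is exactly the extremal case of Proposition~\ref{prop:Ms2multinomial}, which gives $M^n_{r_1, \ldots, r_k} = \binom{n}{n - r_1; \cdots; n - r_k} = \binom{n}{s_1; \cdots; s_k}$. Substituting this together with $|G| = d_1 \cdots d_n = d^n n!$ reduces the rest to routine bookkeeping: the factor $d^{n(k - 1)}$ coming from $|G|^{k - 1}$ cancels against $\prod_i d^{r_i} = d^{n(k - 1)}$ from the denominators, and the surviving factorials regroup as $(n!)^k \prod_i \frac{1}{s_i! \, (n - s_i)!} = \prod_i \binom{n}{s_i}$, which is the claimed product of binomial coefficients.

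Finally, for the zeta polynomial I would sum $a^{(d)}_{n - s_1, \ldots, n - s_k} = \prod_i \binom{n}{s_i}$ over all compositions $s_1 + \cdots + s_k = n$ with $s_i \geq 0$. By the Chu--Vandermonde identity---equivalently, by extracting $[x^n]$ from $\left((1 + x)^n\right)^k = (1 + x)^{nk}$---this sum equals $\binom{nk}{n}$, as desired. I do not expect a serious obstacle: the only point that requires genuine care is the degree argument showing that $p_i = r_i$ is forced, after which the result follows by direct substitution and a single standard binomial identity.
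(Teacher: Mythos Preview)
Your proof is correct and follows essentially the same approach as the paper's: both extract the top-degree coefficient from Theorem~\ref{G(d, 1, n) many factors theorem} by using the triangularity of the basis together with the vanishing of $M^n_{\pp}$ for $\sum p_i > n(k-1)$ to force $\pp = \rr$, then apply Proposition~\ref{prop:Ms2multinomial} and simplify. Your write-up is slightly more explicit about the cancellation of powers of $d$ and about invoking Chu--Vandermonde for the zeta polynomial, but the argument is the same.
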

\begin{proof}
Set $r_i := n - s_i$ for $i = 1, \ldots, k$.  
Since $M^n_{p_1, \ldots, p_k} = 0$ if $p_1 + \ldots + p_k > n(k - 1)$ and the change of basis in Theorem~\ref{G(d, 1, n) many factors theorem} is triangular, the equation $a^{(d)}_{\rr} = [x_1^{r_1}\cdots x_k^{r_k}] F_G =  [x_1^{r_1}\cdots x_k^{r_k}] \left(|G|^{k - 1} \sum_{\pp} M^n_{\pp} \prod_i \frac{(x - 1)^{(d)}_{r_i}}{d^{r_i} r_i!}\right)$ simplifies to 
\[
 a^{(d)}_{\rr} = |G|^{k - 1} M^n_{\rr} \prod_{i = 1}^k \frac{1}{d^{r_i} r_i!}.
 \]
 By Proposition~\ref{prop:Ms2multinomial}, we have in this case that
 $M^n_{\rr} = \frac{n!}{(n - r_1)! \cdots (n - r_k)!}$, and
 consequently $a^{(d)}_{\rr} = \binom{n}{r_1} \cdots \binom{n}{r_k} =
 \binom{n}{s_1} \cdots \binom{n}{s_k}$, as claimed.  Summing $a^{(d)}_{\rr}$ over all nonnegative integer solutions to $s_1 + \ldots + s_k = n$ gives the zeta polynomial.
\end{proof}

The second result, for $G(d, d, n)$, is new when $d > 2$ (but see Remark~\ref{nc remark}); when $d = 2$ (type $D_n$), it is \cite[Thm.~1.2(ii)]{AthanasiadisReiner}.  

\begin{cor}
\label{ddn genus 0}
For $d > 1$, let $G = G(d, d, n)$, and let $c_{(d, d, n)}$ be a Coxeter element in $G$.  Given nonnegative integers $s_1, \ldots, s_k$ with sum $n$, the number of chains in $[1, c_{(d, d, n)}]_{\leq}$ having rank-jumps $s_1, \ldots, s_k$ is 
\[
\binom{n - 1}{s_1} \cdots \binom{n - 1}{s_k}\left(d + \sum_{i = 1}^k \frac{\binom{n - 2}{s_i - 2}}{\binom{n - 1}{s_i}}\right),
\]
with zeta polynomial
\[
Z([1, c_{(d, d, n)}]_{\leq}, k) = d\binom{(n - 1)k}{n} + k \binom{(n - 1)k - 1}{n - 2} = \frac{n + d(n - 1)(k - 1)}{n} \binom{(n - 1)k}{n - 1}. 
\]
\end{cor}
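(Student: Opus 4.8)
The plan is to read off the top-degree coefficients of the full generating function $F_G = \sum_{\rr} a_\rr\, x_1^{r_1}\cdots x_k^{r_k}$. By the discussion preceding the statement, a chain with rank-jumps $s_1,\ldots,s_k$ (where $\sum_i s_i = n$) is the same as a genus-$0$ factorization with $\dim\fix(u_i) = r_i := n - s_i$, so the desired chain count is $a_\rr = [x_1^{r_1}\cdots x_k^{r_k}] F_G$, the coefficient of a monomial of top total degree $\sum_i r_i = n(k-1)$. I would split $F_G = F^{\mathrm{t}} + F^{\mathrm{nt}}$ into its transitive and nontransitive parts, given by Theorem~\ref{thm:mainG(d,d,n) k factors} and by \eqref{eq: algebraic non transitive} respectively, and extract this coefficient from each summand, repeatedly using that $M^{m}_{{\bf q}} = 0$ once $\sum_i q_i > m(k-1)$ and that $M^m_{{\bf q}}$ equals the multinomial $\binom{m}{m - q_1;\cdots;m-q_k}$ exactly at the threshold $\sum_i q_i = m(k-1)$ (Proposition~\ref{prop:Ms2multinomial}).

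For $F^{\mathrm{t}}$, the change of basis in Theorem~\ref{thm:mainG(d,d,n) k factors} is triangular because $\poly{p}{d}{x}$ has degree $p$; since $M^n_\pp$ forces $\sum_i p_i \le n(k-1) = \sum_i r_i$ while a nonzero $[x_i^{r_i}]$ needs $p_i \ge r_i$, only $\pp = \rr$ survives. As $\poly{r}{d}{x}$ is monic, extracting its leading coefficient, evaluating $M^n_\rr$ at the threshold via Proposition~\ref{prop:Ms2multinomial}, and then collecting the powers of $d$ (using $|G| = d^{n-1}n!$), I expect the clean value $a^{\mathrm{t}}_\rr = d\prod_i \binom{n-1}{s_i}$.

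The nontransitive extraction is the crux. The key subtlety in \eqref{eq: algebraic non transitive} is that $Q_p$ and $Q'_p$ share the same top \emph{two} coefficients, so their difference $D_p := Q_p - Q'_p = \tfrac{(x-1)^{(d)}_{p-1}}{d^{p-1}(p-1)!}$ has degree only $p-1$. Writing $\prod_i Q_{p_i} - \prod_i Q'_{p_i} = \sum_{j}\bigl(\prod_{i<j}Q'_{p_i}\bigr)D_{p_j}\bigl(\prod_{i>j}Q_{p_i}\bigr)$ by telescoping, the $j$th summand has total degree $\sum_i(p_i+1)-2$; subject to $M^{n-2}_{\pp-{\bf 1}}\neq 0$ this is maximized exactly at the boundary $\sum_i(p_i-1)=(n-2)(k-1)$, and matching the degree in each variable pins down the single tuple $\pp^{(j)}$ with $p_i = r_i - 1$ ($i\ne j$) and $p_j = r_j + 1$. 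On that tuple Proposition~\ref{prop:Ms2multinomial} evaluates $M^{n-2}_{\pp^{(j)}-{\bf 1}}$; substituting the leading coefficient $\tfrac{1}{d^{r_j}r_j!}$ of $D_{p_j}$ (where the $(p-1)!$, not $p!$, is essential) and collecting powers of $d$ (which cancel) should give the $j$th term $\binom{n-2}{s_j - 2}\prod_{i\ne j}\binom{n-1}{s_i}$. Summing over $j$ and adding $a^{\mathrm{t}}_\rr$ yields $\prod_i\binom{n-1}{s_i}\bigl(d + \sum_j \tfrac{\binom{n-2}{s_j-2}}{\binom{n-1}{s_j}}\bigr)$, as claimed. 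The main obstacle is organizing this cancellation-and-threshold bookkeeping cleanly; an appealing alternative I would also explore is a direct bijection sending a nontransitive factorization of $c_{(d,d,n)}$ to a factorization of the $G(d,1,n-1)$-Coxeter element (forgetting the forced weight on the fixed point $n$), which reduces the count to Corollary~\ref{corollary: genus 0 G(d,1,n)} refined by which single factor carries nonzero weight.

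Finally, the zeta polynomial is obtained by summing $a_\rr$ over all compositions $s_1 + \cdots + s_k = n$, turning the two pieces into binomial convolutions: $\sum \prod_i \binom{n-1}{s_i} = [x^n](1+x)^{(n-1)k} = \binom{(n-1)k}{n}$, and for each $j$, $\sum \binom{n-2}{s_j-2}\prod_{i\ne j}\binom{n-1}{s_i} = [x^n]\bigl(x^2(1+x)^{n-2}(1+x)^{(n-1)(k-1)}\bigr) = \binom{(n-1)k-1}{n-2}$. This gives $Z = d\binom{(n-1)k}{n} + k\binom{(n-1)k-1}{n-2}$, and the closed form $\tfrac{n + d(n-1)(k-1)}{n}\binom{(n-1)k}{n-1}$ then follows from the routine identities $\binom{N}{n} = \tfrac{N-n+1}{n}\binom{N}{n-1}$ and $\binom{N-1}{n-2} = \tfrac{n-1}{N}\binom{N}{n-1}$ with $N = (n-1)k$.
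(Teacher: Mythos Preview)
Your proof is correct. The transitive extraction and the zeta-polynomial computation match the paper's argument essentially verbatim.

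For the nontransitive contribution, however, you take a genuinely different route from the paper. The paper extracts the top-degree coefficient from Lemma~\ref{(d, d, n) nontransitive lemma}, the combinatorial triple sum over subsets $S,T,U$ involving the counts $C^{\langle n-1\rangle}$; it then argues that the threshold inequality $\sum_i(r_i + (\ee_T)_i - (\ee_U)_i - 1)\le (n-2)(k-1)$ forces $S=T=\{j\}$ and $U=[k]\smallsetminus\{j\}$ for a single index $j$, and evaluates that one surviving term. You instead work from the algebraic expression~\eqref{eq: algebraic non transitive} of Remark~\ref{rem: algebraic non transitive}, observing that $Q_p-Q'_p=\frac{(x-1)^{(d)}_{p-1}}{d^{p-1}(p-1)!}$ drops two degrees and using a telescoping expansion of $\prod_i Q_{p_i}-\prod_i Q'_{p_i}$; this immediately isolates, for each $j$, the unique tuple $\pp^{(j)}$ at the $M^{n-2}$ threshold. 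Your path is shorter and the bookkeeping is cleaner, at the cost of relying on~\eqref{eq: algebraic non transitive}, which in the paper is obtained via characters rather than by the combinatorial projection; the paper's route stays entirely within the combinatorial framework of Lemma~\ref{(d, d, n) nontransitive lemma}. Both approaches hinge on the same underlying mechanism (triangularity plus Proposition~\ref{prop:Ms2multinomial} at the threshold) and produce the same term $\binom{n-2}{s_j-2}\prod_{i\neq j}\binom{n-1}{s_i}$.
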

\begin{proof}
Let $r_i = n - s_i$.  Since $M^n_{p_1, \ldots, p_k} = 0$ if $p_1 +
\dots + p_k > n(k - 1)$ and the change of basis is triangular, when we
extract the coefficient of $x_1^{r_1} \cdots x_k^{r_k}$ from
Theorem~\ref{thm:mainG(d,d,n) k factors} and
  Proposition~\ref{prop:Ms2multinomial} we get
\begin{align*}
b^{(d)}_{r_1,\ldots,r_k} & = \frac{|G|^{k-1}}{n^k} M^n_{r_1,\ldots,r_k} \frac{1}{d^{r_1-1} (r_1-1)!} \cdots  \frac{1}{d^{r_k-1} (r_k-1)!} \\
& = \frac{d^{(n - 1)(k - 1)} (n!)^{k - 1}}{n^k} \cdot \frac{n!}{(n - r_1)! \cdots (n - r_k)!} \cdot \frac{1}{d^{n(k - 1) - k} \cdot (r_1 - 1)! \cdots (r_k - 1)!} \\
& = d \cdot \binom{n - 1}{r_1 - 1} \cdots \binom{n - 1}{r_k - 1}.
 \end{align*}

The analysis of the nontransitive case is slightly more delicate: by Lemma~\ref{(d, d, n) nontransitive lemma}, we have that the desired coefficient is
\begin{multline*}
d^{n(k - 1) + 1} \sum_{\substack{S, T, U \subseteq [k]: \\ S \cap T \neq \varnothing \\ S \cap U = \varnothing}} d^{- |S \cup T|} \cdot \prod_{i \not \in S} r_i \cdot 
C^{\langle n - 1 \rangle}_{\rr + \ee_T - \ee_U}
\cdot \frac{1}{d^{r_1}r_1! \cdots d^{r_k}r_k!}
= {}
\\
\frac{d}{ r_1! \cdots r_k!} \sum_{\substack{S, T, U \subseteq [k]: \\ S \cap T \neq \varnothing \\ S \cap U = \varnothing}} d^{- |S \cup T|} \cdot \prod_{i \not \in S} r_i \cdot 
((n - 1)!)^{k - 1} \cdot M^{n - 2}_{\rr + \ee_T - \ee_U - {\bf 1}}.
\end{multline*}
The coefficient $M^{n - 2}_{\rr + \ee_T - \ee_U - {\bf 1}}$ is equal to $0$ whenever the sum of the lower indices is larger than $(n - 2)(k - 1)$. This sum is 
\[
\left(\sum r_i\right) + |T| - |U| - k = n(k - 1) + |T| - |U| - k \geq n(k - 1) + 1 - (n - 1) - k = (n - 2)(k - 1),
\]
with equality if and only if $S = T = \{i\}$ and $U = [k]
\smallsetminus \{i\}$ for some $i \in [k]$.  In this case, by
  Proposition~\ref{prop:Ms2multinomial} we have that the coefficient is $M^{n - 2}_{\rr + \ee_T - \ee_U - {\bf 1}} = \frac{(n - 2)! \cdot (n - r_i)!}{(n - r_1)! \cdots (n - r_k)! \cdot (n - r_i - 2)!}$, and so the number of nontransitive genus-$0$ factorizations is
\begin{multline*}
\frac{d}{ r_1! \cdots r_k!} \sum_{i = 1}^k d^{-1} \cdot \prod_{j \neq i} r_j \cdot ((n - 1)!)^{k - 1} \cdot\frac{(n - 2)! \cdot (n - r_i)!}{(n - r_1)! \cdots (n - r_k)! \cdot (n - r_i - 2)!} =  {}\\
{} = \frac{((n - 1)!)^k}{(r_1 - 1)! \cdots (r_k - 1)! \cdot (n - r_1)! \cdots (n - r_k)! } \sum_{i = 1}^k  \frac{(r_i - 1)! \cdot (n - 2)! \cdot  (n - r_i)! }{(n - 1)! \cdot r_i! \cdot (n - r_i - 2)!} =  {} \\
{} =  \binom{n - 1}{r_1 - 1} \cdots \binom{n - 1}{r_m - 1} \sum_{i = 1}^k \frac{\binom{n - 2}{r_i}}{\binom{n - 1}{r_i - 1}}.
\end{multline*}
Substituting $r_i = n - s_i$ gives the first result, and summing over all $(s_i)$ of sum $n$ gives the zeta polynomial.
\end{proof}

\begin{remark}
\label{nc remark}
In the literature on the poset of noncrossing partitions, they are typically introduced in the following way (e.g., see \cite[\S2.4]{Armstrong} or \cite[\S2.3]{BessisReiner}): in place of the function $\codim\fix(-)$, one defines the \emph{reflection length} $\lR(-)$ by $\lR(g) = \min \{k \colon \exists (t_1, \ldots, t_k) \in R^k, t_1 \cdots t_k = g\}$.  Reflection length is clearly subadditive, and one defines a partial order $\leq_R$ on $G$ by $x \leq_R y$ if $\lR(x) + \lR(x^{-1}y) = \lR(y)$.  Then the \emph{lattice $NC(G, c)$ of $G$-noncrossing partitions} is defined to be the interval $[1, c]_{\leq_R}$.  (All such intervals are isomorphic, for the same sort of reasons as discussed in Remark~\ref{rem:reflection automorphisms}.)

For any element $g$ of any subgroup $G$ of $\GL(V)$ generated by reflections (not necessarily finite; over an arbitrary field), it is easy to see that $\codim\fix(g) \leq \lR(g)$.  If $G$ is a finite real reflection group, Carter proved \cite[Lem.~2]{Carter} that in fact $\codim \fix(g) = \lR(g)$, and consequently the two orders $\leq$ and $\leq_R$ coincide in this case.  The same is true in the wreath product $G(d, 1, n)$ \cite[Rem.~2.3]{Shi}, and in a variety of other settings \cite{BradyWattOrthogonal, HLR}.  However, equality does \emph{not} hold in any other irreducible complex reflection group \cite{FG}.

In several places in the literature (e.g., in
\cite[Lem.~4.1(ii)]{BessisCorran}), one finds versions the following
deduction: from the inequality $\lR(g) \geq \codim\fix(g)$ for all $g \in G$, the equality $\lR(c) = \codim\fix(c)$, and the subadditivity of $\codim\fix(-)$, it follows that if $g \in NC(G, c)$ then $\lR(g) = \codim\fix(g)$ and so $g \in [1, c]_{\leq}$, with the same rank.  However, we were unable to find the following question addressed in the literature: is there a complex reflection group $G$ and an element $g$ of $G$ such that $g \in [1, c]_{\leq}$ but $g \not \in NC(G, c)$?

In \cite[Thm.~8.1]{BessisCorran}, Bessis and Corran prove that the
zeta polynomial for the lattice $NC(G(d, d, n), c_{(d, d, n)})$ is
equal to $\frac{n + d(n - 1)(k - 1)}{n} \prod_{i = 1}^{n - 1} \frac{i
  + (n - 1)(k - 1)}{i} = \frac{n + d(n - 1)(k - 1)}{n} \binom{(n-
  1)k}{n - 1}$; that is, it agrees with the zeta polynomial for the
interval $[1, c_{(d, d, n)}]_{\leq}$ from Corollary~\ref{ddn genus 0}.
In particular, taking $k = 2$, the two intervals have the same size,
and so in fact they must have the same set of elements.
\end{remark}

This coincidence holds in a strong form for all well generated complex reflection groups.

\begin{corollary}
\label{cor:alt NC}
If $G$ is an irreducible well generated complex reflection group and $c$ is a Coxeter element, then the interval $[1, c]_{\leq}$ in the $\codim\fix(-)$-order $\leq$ is identical to the $G$-noncrossing partition lattice $NC(G, c)$.
\end{corollary}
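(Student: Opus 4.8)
The plan is to show the two posets have identical ground sets and identical order relations by matching their multichain-counting (zeta) polynomials, rather than by comparing $\lR$ and $\codim\fix$ directly. First I would record what Remark~\ref{nc remark} already gives: $NC(G, c) \subseteq [1, c]_{\leq}$ as sets, and $\lR(g) = \codim\fix(g)$ for every $g \in NC(G, c)$. Using the standard fact that an interval $[x, y]_{\leq_R}$ below $c$ is again a noncrossing partition lattice (so that $x^{-1}y \in NC(G, c)$ whenever $x \leq_R y \leq_R c$), I would upgrade this to the statement that $\leq_R$ is a subrelation of $\leq$ on the common ground set: for $x \leq_R y$ one has $\codim\fix(x) + \codim\fix(x^{-1}y) = \lR(x) + \lR(x^{-1}y) = \lR(y) = \codim\fix(y)$, i.e.\ $x \leq y$. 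Both posets are graded of rank $n$ with bottom $1$ and top $c$. In the convention used around Corollary~\ref{ddn genus 0}, the zeta value $Z(P, k)$ counts the length-$(k - 1)$ multichains of $P$; thus $Z(P, 2) = |P|$ counts elements and $Z(P, 3)$ counts comparable ordered pairs, i.e.\ order relations. So the whole statement reduces to proving $Z([1, c]_{\leq}, k) = Z(NC(G, c), k)$ for $k = 2$ (which forces the ground sets to coincide) and $k = 3$ (which, together with $\leq_R \,\subseteq\, \leq$, forces $\leq \,=\, \leq_R$).

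Next I would carry out the comparison family by family. The multichain count $Z(NC(G, c), k)$ is the well generated $G$-Fuss--Catalan number $\prod_{i = 1}^n \frac{(k - 1)h + d_i}{d_i}$ \cite{Bessis}. When $G$ is real or of type $G(d, 1, n)$, the orders $\leq$ and $\leq_R$ already agree on all of $G$ (Carter \cite{Carter}, resp.\ Shi \cite{Shi}), so nothing is left to prove. For $G = G(d, 1, n)$ the value $Z([1, c]_{\leq}, k) = \binom{nk}{n}$ from Corollary~\ref{corollary: genus 0 G(d,1,n)} is exactly this Fuss--Catalan number, and for $G = G(d, d, n)$ the value computed in Corollary~\ref{ddn genus 0} coincides with the Bessis--Corran formula \cite{BessisCorran} recalled in Remark~\ref{nc remark}. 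For the exceptional groups I would extract the genus-$0$ (top-degree, extremal) part of $F_G$ from the character computations of Section~\ref{sec:exceptional}: for those whose $F_G$ has the clean shape $|G|^{k - 1} \sum_{\pp} M^n_{\pp} \prod_i P_{p_i}(x_i)$, the extremal coefficients are the multinomials of Proposition~\ref{prop:Ms2multinomial}, and pairing them with the leading coefficients of the $P_i$ returns the Fuss--Catalan number; for the rest (where $F_G$ carries additional terms) I would check that the extra terms cancel out of the extremal coefficients, leaving the same genus-$0$ count. In each case the two zeta polynomials agree, completing the argument.

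The main obstacle is that there appears to be no uniform proof. A direct approach --- trying to prove $\lR = \codim\fix$ throughout the interval $[1, c]_{\leq}$ --- is circular: one always has $\codim\fix \leq \lR$, which pushes reflection length the wrong way, and the reverse inequality on the interval is precisely the content of the corollary. This forces the enumerative route above, and the required identity $Z([1, c]_{\leq}, k) = Z(NC(G, c), k)$ must be verified case by case. The genuinely delicate cases are the exceptional groups of rank $\geq 4$, where no closed form for $F_G$ is available; there the extremal coefficients must be read off the character data and matched against the tabulated Fuss--Catalan values, and the fact that they always coincide is exactly the ``coincidence'' the introduction advertises as having been overlooked.
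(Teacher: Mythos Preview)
Your proposal is correct and shares the paper's case-by-case architecture, but the execution differs in two ways worth noting.

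First, you compare zeta polynomials at both $k=2$ and $k=3$, while the paper only needs $k=2$.  Once the underlying sets coincide and $\lR = \codim\fix$ on $NC(G,c)$, the order relations agree automatically: if $u \leq v \leq c$ in the $\codim\fix$ order, then the involution $g \mapsto g^{-1}c$ and transitivity show $u^{-1}v \in [1,c]_{\leq} = NC(G,c)$ as well, so $\lR(u^{-1}v) = \codim\fix(u^{-1}v)$, whence $\lR(u) + \lR(u^{-1}v) = \codim\fix(u) + \codim\fix(u^{-1}v) = \codim\fix(v) = \lR(v)$ and $u \leq_R v$.  Your $k=3$ step is therefore redundant (though your counting argument is valid).

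Second, for the non-real exceptional groups of rank $\geq 3$ the paper does \emph{not} extract genus-$0$ coefficients from the character formulas of Section~\ref{sec:exceptional}; indeed for rank $\geq 4$ no usable closed form for $F_G$ is given there.  Instead it carries out a direct brute-force count of $|[1,c]_{\leq}|$ in Sage and matches it against the $G$-Catalan number $\prod_i (h+d_i)/d_i$.  Your route---summing the top-degree coefficients of $F_G$---would in principle yield the same number, but is a longer computation, and your phrase about extra terms ``cancelling out of the extremal coefficients'' is imprecise: the extremal coefficients are what they are regardless of whether $F_G$ has the $M^n_{\pp}$ shape.  In rank $2$ the paper also avoids enumeration entirely, observing that both $1 < g < c$ and $1 <_R g <_R c$ are equivalent to ``$g$ and $g^{-1}c$ are reflections.''
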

\begin{proof}
The situation in the case that $G$ is real or is in the infinite families $G(d, 1, n)$, $G(d, d, n)$ for $d > 1$ is described in Remark~\ref{nc remark}.  For $G$ of rank $2$, the conditions $1 < g < c$ and $1 <_{R} g <_{R} c$ are both equivalent to the statement that $g$ and $g^{-1}c$ are reflections. For the remaining exceptional groups ($G_{24}$, $G_{25}$, $G_{26}$, $G_{27}$, $G_{29}$, $G_{32}$, $G_{33}$, $G_{34}$), we checked by a brute-force calculation in Sage \cite{sage} that the number of elements in $[1, c]_{\leq}$ is the same as the \emph{$G$-Catalan number}\footnote{The precise attribution of this equality is complicated.  Already in the real case, it was first handled independently in several cases -- see \cite[\S3]{Chapoton} for a summary.  The proofs for $G(d, 1, n)$ and $G(d, d, n)$ may be found in \cite{Reiner} and \cite{BessisCorran}, and the proof for exceptional groups follows from \cite[Thm.~2.2 and Cor.~13.2]{Bessis}.}
$|NC(G, c)| = \prod_i \frac{h + d_i}{d_i}$.  (In the very large group
$G_{34}$, our computation make use of the fact that the interval $[1,
c]_{\leq}$ has symmetric rank sizes: the map $g \mapsto g^{-1}c$ is easily seen to be a rank-reversing bijection from $[1, c]_{\leq}$ to itself.)  Thus the underlying sets of the two intervals are equal.  Since $\lR(w) = \codim\fix(w)$ and $\lR(w^{-1}c) = \codim\fix(w^{-1}c)$ for every $w \in NC(W, c)$, the two intervals are equal as posets.
\end{proof}

There does not seem to be an obvious reason for the inclusion $[1,
c]_{\leq} \subseteq NC(G, c)$ to hold.  Nevertheless, we are very
surprised not to find it remarked upon in the literature!  For open questions along these lines, see Section~\ref{sec:intervals in two posets}.

\section{More refined counting: cycle type for $G(d, 1, n)$}
\label{cycle type}
In \cite{dHR}, the Chapuy--Stump result (Theorem~\ref{thm:cs}) was
refined as follows: rather than lumping all reflections together, they
were divided into classes according to the orbit of their fixed space
under the action of the group.  For example, in $G(d, 1, n)$, this
separates the reflections into two classes: the transposition-like
reflections (which form a conjugacy class) and the diagonal reflections
(which are \emph{not} all conjugate if $d > 2$, but whose fixed spaces
$\{ (x_1, \ldots, x_n) \colon x_i = 0\}$ form one orbit under the
action of the group).  This refinement makes perfect sense in our 
setting: one could ask to count arbitrary factorizations of a
Coxeter element, tracking the orbit of the fixed space of each factor.
In this section, we consider this question for the group $G(d, 1, n)$, refining our main theorem (Theorem~\ref{G(d, 1, n) many factors theorem}) in this case.

In the case of the symmetric group, fixed-space orbits correspond exactly to
cycle types (i.e., conjugacy classes). Beginning with the work \cite{GJ} of Goulden--Jackson for the genus-$0$ case, 
numerous authors have tackled this problem, using a mix of algebraic and combinatorial
techniques, counting the factorizations directly
\cite{goupil-schaeffer, PS,CFF} or counting colored factorizations \cite{MV,B} (as in our approach).  
These works culminated in the following theorem of Bernardi--Morales, 
where the generating polynomial is
written in terms of the power sum $p_{\lambda}$ and monomial symmetric
functions $m_{\mu}$ in $k$ distinct sets of variables
$\yy_i=(y_1^{(i)},y_2^{(i)},\ldots)$.
 
\begin{theorem}[{\cite[Cor.~1.4]{BM1}}] 
\label{thm: Jackson cycle type}
For $\lambda^{(1)},\ldots,\lambda^{(k)}$ partitions of $n$, let
$a_{\lambda^{(1)},\ldots,\lambda^{(k)}}$ be the number of $k$-tuples
$(\pi_1,\ldots,\pi_k)$ of elements in $\mathfrak{S}_n$ such that
$\pi_i$ has cycle type $\lambda^{(i)}$ for $i=1,\ldots,k$, and
$\pi_1\cdots \pi_k = \cn$.  We have
\begin{equation} \label{equation  Jackson cycle type}
\sum_{\lambda^{(1)},\ldots,\lambda^{(k)}}
a_{\lambda^{(1)},\ldots,\lambda^{(k)}} p_{\lambda^{(1)}}(\yy_1)\cdots
p_{\lambda^{(k)}}(\yy_k) =
(n!)^{k-1}\sum_{\mu^{(1)},\ldots,\mu^{(k)}}
\frac{M^{n-1}_{\ell(\mu^{(1)})-1,\ldots,\ell(\mu^{(k)})-1}}{
  \binom{n-1}{\ell(\mu^{(i})-1}\cdots \binom{n-1}{\ell(\mu^{(k})-1} } m_{\mu^{(1)}}(\yy_1)\cdots
m_{\mu^{(k)}}(\yy_k),
\end{equation}
where both sums are over partitions of $n$.
\end{theorem}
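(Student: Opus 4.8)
The plan is to adapt the character-theoretic argument used for Theorem~\ref{S_n (n-1) cycle theorem k factors}, but tracking full cycle type via power sums rather than collapsing to the number of cycles. First I would apply the Frobenius formula (Lemma~\ref{frobenius}) with $g = \cn$ and each $A_i$ the conjugacy class $C_{\lambda^{(i)}}$ of permutations of cycle type $\lambda^{(i)}$, which is closed under conjugation. Writing $\chi_\nu$ for the irreducible character indexed by $\nu \vdash n$ and using $|C_{\lambda^{(i)}}| = n!/z_{\lambda^{(i)}}$, this gives
\[
a_{\lambda^{(1)},\ldots,\lambda^{(k)}} = \frac{1}{n!}\sum_{\nu\vdash n}\dim(\nu)^{1-k}\,\chi_\nu(\cn^{-1})\prod_{i=1}^k \frac{n!\,\chi_\nu(\lambda^{(i)})}{z_{\lambda^{(i)}}}.
\]
Multiplying by $\prod_i p_{\lambda^{(i)}}(\yy_i)$ and summing over all $k$-tuples of cycle types, the inner sum over each $\lambda^{(i)}$ collapses through the power-sum expansion $s_\nu = \sum_\lambda z_\lambda^{-1}\chi_\nu(\lambda)\,p_\lambda$, so the left-hand side becomes $(n!)^{k-1}\sum_{\nu}\dim(\nu)^{1-k}\chi_\nu(\cn^{-1})\prod_i s_\nu(\yy_i)$.

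Next I would apply the Murnaghan--Nakayama rule exactly as in the proof of Theorem~\ref{S_n (n-1) cycle theorem k factors}: the value $\chi_\nu(\cn^{-1})$ vanishes unless $\nu$ is a hook $\hook{m}{n}$, where it equals $(-1)^m$, while $\dim(\hook{m}{n}) = \binom{n-1}{m}$. This reduces the left-hand side to
\[
(n!)^{k-1}\sum_{m=0}^{n-1}\frac{(-1)^m}{\binom{n-1}{m}^{k-1}}\prod_{i=1}^k s_{\hook{m}{n}}(\yy_i).
\]
The one genuinely new ingredient is the monomial expansion of a hook Schur function. I would establish the clean formula $[m_\mu]\,s_{\hook{m}{n}} = \binom{\ell(\mu)-1}{m}$ by a direct count of semistandard tableaux: the corner cell must carry the value $1$, the leg is a strictly increasing choice of $m$ of the remaining values $\{2,\ldots,\ell(\mu)\}$, and the arm is then forced, independently of the multiplicities of $\mu$.

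Extracting the coefficient of $m_{\mu^{(1)}}(\yy_1)\cdots m_{\mu^{(k)}}(\yy_k)$ and writing $\ell_i := \ell(\mu^{(i)})$ then reduces the theorem to the purely numerical identity
\[
\sum_{m\geq 0}(-1)^m\binom{n-1}{m}\prod_{i=1}^k\binom{n-1-m}{\ell_i-1-m} = M^{n-1}_{\ell_1-1,\ldots,\ell_k-1},
\]
which I reach after clearing the denominators $\prod_i\binom{n-1}{\ell_i-1}$ and applying the subset-of-a-subset identity $\binom{\ell_i-1}{m}\binom{n-1}{\ell_i-1} = \binom{n-1}{m}\binom{n-1-m}{\ell_i-1-m}$ to each factor, so that the excess powers of $\binom{n-1}{m}$ cancel. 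But the left-hand side is literally the defining sum~\eqref{definition M coeff} for $M^{n-1}_{\ell_1-1,\ldots,\ell_k-1}$ with summation index $t=m$, so the identity holds on the nose. On this route I do not expect a serious obstacle: the hook Kostka number is elementary and the final matching is exact, so the work is confined to symmetric-function bookkeeping.

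An alternative, closer to the combinatorial spirit of the surrounding sections and to the proof in \cite{BM1}, is to expand each $p_{\lambda^{(i)}}(\yy_i)$ as a sum over colorings of the cycles of $\pi_i$, so that the exponent of a color records the total size of the cycles assigned that color; then $[m_{\mu^{(1)}}\cdots m_{\mu^{(k)}}]$ counts colored factorizations of $\cn$ with prescribed color-class sizes. This reduces the statement to the assertion that such counts depend only on the \emph{number} of colors used in each factor and not on the individual class sizes, after which Corollary~\ref{S_n corollary} and the count $\prod_i\binom{n-1}{\ell_i-1}$ of compositions realizing the given part-counts deliver the coefficient. That size-independence is the hard combinatorial input, supplied by the bijections with maps in \cite{BM1,BM2}, and it would be the main obstacle on this route --- one that the algebraic argument above avoids entirely.
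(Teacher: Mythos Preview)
The paper does not prove this theorem; it is quoted as \cite[Cor.~1.4]{BM1} and used as a black box. Your algebraic proof is correct and complete: the Frobenius--Murnaghan--Nakayama reduction to hooks is standard, the hook Kostka number $K_{\hook{m}{n},\mu}=\binom{\ell(\mu)-1}{m}$ is established by exactly the tableau argument you sketch, and the binomial manipulation collapsing the sum to the defining expression~\eqref{definition M coeff} for $M^{n-1}_{\ell_1-1,\ldots,\ell_k-1}$ is exact.

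Your approach is genuinely different from the one in \cite{BM1}, which is the combinatorial route you describe as your ``alternative'': there the statement is proved bijectively via tree-rooted maps, and the hard input is precisely the size-independence of the colored-factorization counts $C^{\langle n\rangle}_{\alpha^{(1)},\ldots,\alpha^{(k)}}$ (depending only on the $\ell(\alpha^{(i)})$), which your algebraic argument bypasses entirely. What you gain is a short, self-contained proof living entirely inside the character-theoretic framework already deployed in Section~\ref{sec:n - 1 cycle}; what the bijective proof buys is that Corollary~\ref{corollary symmetric group cycle type} becomes a direct combinatorial fact rather than an \emph{a posteriori} consequence, and it is this combinatorial fact (via Lemma~\ref{G(d, 1, n) lemma cycle type}) that drives the paper's proof of the $G(d,1,n)$ refinement.
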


In $G = G(d, 1, n)$ for $d > 1$, two elements $u$ and $w$ have fixed spaces in the same $G$-orbit if and only if they have the same number of $k$-cycles of weight $0$ for each $k$.  Thus, orbits of fixed spaces are indexed by the partition $\lambda_0$ of
Proposition~\ref{prop:reflection length}. In this section, we
refine Theorem~\ref{G(d, 1, n) many factors theorem} by keeping track of this partition
$\lambda_0$ for each of the factors in a factorization of a
Coxeter element. 

\begin{theorem} \label{G(d, 1, n) many factors theorem cycle type}
For $d > 1$, let $G = G(d, 1, n)$, let $c$ be a fixed Coxeter element
in $G$, and let $a^{(d)}_{\lambda^{(1)},\ldots,\lambda^{(k)}}$ be the
number of factorizations of $c$ as a product of $k$ elements of $G$
with weight-$0$ cycle type $\lambda^{(1)},\ldots,\lambda^{(k)}$,
respectively.  Then
\begin{multline} \label{equation G(d,1,n) many factors cycle type}
\sum_{\lambda^{(1)},\ldots,\lambda^{(k)}} a_{\lambda^{(1)},\ldots,\lambda^{(k)}}^{(d)}
\prod_{i=1}^k p_{\lambda^{(i)}}(1, \underbrace{y^{(i)}_1, \ldots,
  y^{(i)}_1}_{d},  \underbrace{y^{(i)}_2, \ldots, y^{(i)}_2}_{d},
\ldots)  \,=\,\\
=
|G|^{k-1} \sum_{\mu^{(1)},\ldots,\mu^{(k)}} \frac{M^{n-1}_{q_1-1,\ldots,q_k-1}}{\prod_{i=1}^k
  \binom{n-1}{q_i-1}}
m_{\mu^{(1)}}(\yy_1)\cdots m_{\mu^{(k)}}(\yy_k),
\end{multline}
where the sum on the RHS is over partitions $\mu^{(i)}$ of size at most $n$ such that not all are of size $n$,
and $q_j := \begin{cases}
\ell(\mu^{(j)}) &\text{ if } |\mu^{(j)}| = n \\
\ell(\mu^{(j)})+1 & \text{ otherwise}
\end{cases}$.
\end{theorem}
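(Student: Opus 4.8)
The plan is to reduce Theorem~\ref{G(d, 1, n) many factors theorem cycle type} to the symmetric-group cycle-type result (Theorem~\ref{thm: Jackson cycle type}) via the same projection $G(d,1,n) \twoheadrightarrow \Symm_n$ used in Lemma~\ref{G(d, 1, n) lemma}, now refined to track cycle lengths. The starting point is the specialization $p_m(1, \underbrace{y_1, \ldots, y_1}_d, \ldots) = 1 + d\,p_m(\yy)$, so that $p_\lambda(1, \underbrace{y_1, \ldots}_d, \ldots) = \prod_r \bigl(1 + d\,p_{\lambda_r}(\yy)\bigr)$. Substituting this into the left-hand side and grouping the sum over weight-$0$ cycle types by the underlying factorization rewrites it as
\[
\sum_{u_1 \cdots u_k = c} \; \prod_{i=1}^k \prod_{\substack{\text{weight-}0\text{ cycles } C \text{ of } u_i}} \bigl(1 + d\,p_{|C|}(\yy_i)\bigr).
\]
Expanding $1 + d\,p_{|C|}(\yy_i) = 1 + d\sum_{j} (y^{(i)}_j)^{|C|}$ term by term exhibits this as the generating function for cycle-colored factorizations of $c$ in the sense of Definition~\ref{def:coloring}: each weight-$0$ cycle is either given color $0$ (the summand $1$) or assigned a $d$-strip $j$ together with one of its $d$ colors (the summand $d\,(y^{(i)}_j)^{|C|}$), cycles of nonzero weight are forced to color $0$, and a cycle in strip $j$ of length $\ell$ is weighted by $(y^{(i)}_j)^\ell$ while color-$0$ cycles are untracked.

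Next I would project exactly as in Lemma~\ref{G(d, 1, n) lemma}, sending a cycle in strip $j$ to a color-$j$ cycle of $\pi_i$ and a color-$0$ cycle to a color-$0$ cycle of $\pi_i$. This projection preserves the length of every cycle, and the fiber over each colored $\Symm_n$-factorization of $\cn$ again has size $d^{n(k-1)}$, independent of the lengths involved. Writing $\Sigma$ for the generating function of colored factorizations of $\cn$ in $\Symm_n$ in which each cycle gets a color in $\{0,1,2,\ldots\}$, a color-$j$ cycle of length $\ell$ is weighted by $(y^{(i)}_j)^\ell$, color-$0$ cycles are untracked, and at least one color $0$ appears somewhere (forced because $c$ has nonzero total weight), this identifies the left-hand side with $d^{n(k-1)}\Sigma$. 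Since $|G| = d^n \cdot n!$, it then remains to show $\Sigma$ equals $(n!)^{k-1}$ times the right-hand double sum.

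To evaluate $\Sigma$ I would reinstate color $0$ as a genuine tracked color: let $\Sigma'$ be the same generating function with color-$0$ cycles of factor $i$ tracked by a fresh variable $z_i$. Then $\Sigma'$ is precisely the left-hand side of Theorem~\ref{thm: Jackson cycle type} in the variable sets $\hat\yy_i = (z_i, y^{(i)}_1, y^{(i)}_2, \ldots)$, which evaluates it in the monomial basis summed over $\tilde\mu^{(i)} \vdash n$. The constraint ``a color $0$ appears'' is then inclusion-exclusion on the single event ``no color $0$'': $\Sigma = \Sigma'\big|_{z_i=1} - \Sigma'\big|_{z_i=0}$. Setting $z_i = 0$ reproduces Theorem~\ref{thm: Jackson cycle type} verbatim, while setting $z_i = 1$ untracks color $0$ via the elementary expansion $m_{\tilde\mu}(1,\yy) = m_{\tilde\mu}(\yy) + \sum_{p} m_{\tilde\mu - \{p\}}(\yy)$, the sum over distinct parts $p$ of $\tilde\mu$. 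Extracting the coefficient of $\prod_i m_{\mu^{(i)}}(\yy_i)$, the key observation is that for each target $\mu^{(i)}$ of size at most $n$ there is a unique $\tilde\mu^{(i)} \vdash n$ producing $m_{\mu^{(i)}}(\yy_i)$ under $m_{\tilde\mu^{(i)}}(1,\yy_i)$, namely $\tilde\mu^{(i)} = \mu^{(i)}$ if $|\mu^{(i)}| = n$ and $\tilde\mu^{(i)} = \mu^{(i)} \cup \{n - |\mu^{(i)}|\}$ otherwise; in both cases $\ell(\tilde\mu^{(i)}) = q_i$. Thus the $z_i=1$ term contributes $(n!)^{k-1} M^{n-1}_{q_1-1,\ldots,q_k-1} / \prod_i \binom{n-1}{q_i-1}$, while the subtracted $z_i=0$ term contributes the identical coefficient exactly when every $|\mu^{(i)}| = n$ (where $q_i = \ell(\mu^{(i)})$) and nothing otherwise. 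The two therefore cancel on all-size-$n$ tuples and the $z_i=1$ term alone survives elsewhere, which is precisely the sum over $\mu^{(i)}$ of size at most $n$ not all of size $n$.

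The step I expect to be most delicate is the identification of the left-hand side with $d^{n(k-1)}\Sigma$: one must verify that refining Lemma~\ref{G(d, 1, n) lemma} by cycle length does not disturb the uniform fiber count, and in particular that the factor of $d$ coming from choosing a specific color \emph{within} a strip is one of the factors already absorbed into $d^{n(k-1)}$ in that lemma, and so must not be double-counted against the factor $d$ appearing in $1 + d\,p_{|C|}(\yy_i)$. Keeping these two occurrences of $d$ straight---one packaged as the passage from strip-level to specific colorings on the $G(d,1,n)$ side, the other as part of the fiber---is the main bookkeeping hazard; once it is handled, the remaining symmetric-function manipulation is routine.
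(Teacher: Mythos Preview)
Your proposal is correct and follows essentially the same route as the paper: interpret the left-hand side as the generating function for cycle-colored factorizations of $c$ via $p_m(1,y_1,\ldots,y_1,\ldots)=1+d\,p_m(\yy)$, project to colored factorizations of the $n$-cycle in $\Symm_n$ with uniform fiber $d^{n(k-1)}$ (the paper packages this as Lemma~\ref{G(d, 1, n) lemma cycle type}), and then invoke Jackson's cycle-type formula (Theorem~\ref{thm: Jackson cycle type}, equivalently Corollary~\ref{corollary symmetric group cycle type}).  The only difference is bookkeeping for the color-$0$ block: the paper works with composition-indexed counts $C^{\langle n\rangle}_{\alpha^{(1)},\ldots,\alpha^{(k)}}$ and prepends a part $n-|\alpha^{(i)}|$, whereas you adjoin a tracking variable $z_i$ and use the identity $m_{\tilde\mu}(1,\yy)=m_{\tilde\mu}(\yy)+\sum_{p}m_{\tilde\mu\setminus\{p\}}(\yy)$ (sum over distinct part-sizes $p$) together with the subtraction $\Sigma'|_{z_i=1}-\Sigma'|_{z_i=0}$; these are equivalent manipulations.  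Your worry about double-counting factors of $d$ is not an issue: the $d$ in $1+d\,p_{|C|}(\yy_i)$ makes the left-hand side count \emph{specific}-colored factorizations of $c$, while $d^{n(k-1)}$ is exactly the size of the fiber from specific-colored $G$-factorizations to strip-level colored $\Symm_n$-factorizations (this is precisely how the factor $d^{\#\text{colored cycles}}$ is absorbed in the proof of Lemma~\ref{G(d, 1, n) lemma}), so $\text{LHS}=d^{n(k-1)}\Sigma$ with no overlap.
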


The rest of this section is devoted to the proof of this result.

\begin{definition} \label{def: colored factorizations Sn by type}
Given compositions $\alpha^{(1)},\ldots,\alpha^{(k)}$ of $n$, let $\mathcal{C}^{\langle n\rangle}_{\alpha^{(1)},\ldots,\alpha^{(k)}}$ be the    set of
factorizations in $\mathfrak{S}_n$ of the fixed $n$-cycle $\cn$ as a
product $\pi_1\cdots \pi_k$ such that for each $i$, the cycles of
$\pi_i$ are colored with $\ell(\alpha^{(i)})$ colors and for $j=1,\ldots,\ell(\alpha^{(i)})$
the sum of the lengths of the cycles colored with the $j$th color is
$\alpha^{(i)}_j$. We say that such factorizations have \emph{type}
$\alpha^{(1)},\ldots,\alpha^{(k)}$.  Let
$C^{\langle n\rangle}_{\alpha^{(1)},\ldots,\alpha^{(k)}} := |\mathcal{C}^{\langle n\rangle}_{\alpha^{(1)},\ldots,\alpha^{(k)}}|$ be the number of colored
factorizations of type $\alpha^{(1)},\ldots,\alpha^{(k)}$. 
\end{definition}

By relabelling colors, one has $C^{\langle
  n\rangle}_{\alpha^{(1)},\ldots,\alpha^{(k)}} = C^{\langle n\rangle}_{\mu^{(1)},\ldots,\mu^{(k)}}$ where $\mu^{(i)}$ is the
underlying partition of $\alpha^{(i)}$ for $i=1,\ldots,k$.  Furthermore, one has $C^{\langle n\rangle}_{p_1,\ldots,p_k} =
\sum_{\alpha^{(i)}}
C^{\langle n\rangle}_{\alpha^{(1)},\ldots,\alpha^{(k)}}$ where the sum is over
compositions $\alpha^{(i)}$ of $n$ with $p_i$ parts for
$i=1,\ldots,k$  (see Remark~\ref{remark: independence color sets} about the
independence of choice of sets of colors).   As in Proposition~\ref{prop:change of basis}, a
change of basis in the generating polynomial for $a_{\lambda^{(1)},\ldots,\lambda^{(k)}}$ may be interpreted in terms of the enumeration of
colored factorizations. 

\begin{proposition}[{\cite[(2.2.15)]{M}}]
\label{prop:change of basis cycle type}
With $a_{\lambda^{(1)},\ldots,\lambda^{(k)}}$ and
$C^{\langle n\rangle}_{\mu^{(1)},\ldots,\mu^{(k)}}$ as defined above, one
has
\begin{equation}
\sum_{\lambda^{(1)},\ldots,\lambda^{(k)}}
a_{\lambda^{(1)},\ldots,\lambda^{(k)}} p_{\lambda^{(1)}}(\yy_1)\cdots
p_{\lambda^{(k)}}(\yy_k)  = 
\sum_{\mu^{(1)},\ldots,\mu^{(k)}}
C^{\langle n\rangle}_{\mu^{(1)},\ldots, \mu^{(k)}} m_{\mu^{(1)}}(\yy_1) \cdots  m_{\mu^{(k)}}(\yy_k),
\end{equation}
where the sums on both sides are over partitions of $n$.
\end{proposition}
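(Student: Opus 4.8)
The plan is to mimic the proof of Proposition~\ref{prop:change of basis}, reading both sides of the claimed identity as generating functions for cycle-colored factorizations of $\cn$ and then comparing the coefficient of an arbitrary monomial in the variables $\yy_1, \ldots, \yy_k$. Since both sides are symmetric functions in each $\yy_i$ separately, equality of all such coefficients gives the identity of symmetric functions.

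First I would interpret the left-hand side combinatorially. Expanding each power sum as a product over cycles, $p_{\lambda^{(i)}}(\yy_i) = \prod_{c}\, p_{|c|}(\yy_i) = \prod_c \sum_{t \geq 1} (y^{(i)}_t)^{|c|}$, where the product is over the cycles $c$ of a permutation $\pi_i$ of cycle type $\lambda^{(i)}$, and distributing the products, one obtains
\[
\sum_{\lambda^{(1)},\ldots,\lambda^{(k)}} a_{\lambda^{(1)},\ldots,\lambda^{(k)}} \prod_{i=1}^k p_{\lambda^{(i)}}(\yy_i) \;=\; \sum \;\prod_{i=1}^k \prod_c (y^{(i)}_{\kappa_i(c)})^{|c|},
\]
where the outer sum runs over all factorizations $\pi_1 \cdots \pi_k = \cn$ in $\Symm_n$ together with an arbitrary coloring $\kappa_i$ of the cycles of each $\pi_i$ by positive integers (colors), with an unbounded supply of colors available. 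Thus the left-hand side is the generating function for cycle-colored factorizations of $\cn$, weighted by the total color-length data of each factor.

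Next I would extract coefficients. Fix a tuple of weak compositions $\alpha^{(1)}, \ldots, \alpha^{(k)}$ and consider the coefficient of $\prod_i \prod_t (y^{(i)}_t)^{\alpha^{(i)}_t}$. On the left-hand side, a colored factorization contributes to this monomial exactly when, for each $i$ and each color $t$, the lengths of the cycles of $\pi_i$ colored $t$ sum to $\alpha^{(i)}_t$; by Definition~\ref{def: colored factorizations Sn by type}, the number of these (after discarding the colors with $\alpha^{(i)}_t = 0$) is $C^{\langle n\rangle}_{\alpha^{(1)},\ldots,\alpha^{(k)}}$. On the right-hand side, since $m_{\mu^{(i)}}(\yy_i)$ is the sum of the distinct monomials whose exponent sequence rearranges $\mu^{(i)}$, and the $\yy_i$ are disjoint variable sets, the monomial above appears with coefficient $1$ in exactly the single term where $\mu^{(i)}$ is the underlying partition of $\alpha^{(i)}$ for every $i$; so its coefficient there is $C^{\langle n\rangle}_{\mu^{(1)},\ldots,\mu^{(k)}}$. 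The two coefficients agree by the relabeling invariance $C^{\langle n\rangle}_{\alpha^{(1)},\ldots,\alpha^{(k)}} = C^{\langle n\rangle}_{\mu^{(1)},\ldots,\mu^{(k)}}$ recorded just before the proposition, and since the monomial was arbitrary the result follows.

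The main obstacle here is not difficulty but bookkeeping around the color set: one must work with infinitely many colors so that every monomial is actually realized, and one must invoke the relabeling invariance to reconcile the composition-indexed count naturally extracted from the left-hand side with the partition-indexed count appearing on the right-hand side. As a consistency check I would specialize each $\yy_i \mapsto 1^{x_i}$ and use \eqref{eq: stable specialization} (the identities $p_\lambda(1^x) = x^{\ell(\lambda)}$ and $m_\mu(1^x) = \binom{x}{\ell(\mu)}\,\ell(\mu)!/\Aut(\mu)$); after grouping the $\prod_i \ell(\mu^{(i)})!/\Aut(\mu^{(i)})$ composition tuples sharing each underlying partition tuple, this recovers Proposition~\ref{prop:change of basis} exactly.
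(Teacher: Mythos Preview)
Your proof is correct. The paper does not actually supply its own proof of this proposition---it is cited from \cite{M}---but your argument is exactly the natural refinement of the paper's proof of Proposition~\ref{prop:change of basis} to the cycle-type setting, interpreting both sides as generating functions for cycle-colored factorizations and matching monomial coefficients via the relabeling invariance $C^{\langle n\rangle}_{\alpha^{(1)},\ldots,\alpha^{(k)}} = C^{\langle n\rangle}_{\mu^{(1)},\ldots,\mu^{(k)}}$ stated just before the proposition.
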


The following corollary is an immediate consequence of Theorem~\ref{thm: Jackson cycle type}
and Proposition~\ref{prop:change of basis cycle type}. 

\begin{corollary} \label{corollary symmetric group cycle type}
Given compositions $\alpha^{(1)},\ldots,\alpha^{(k)}$ of $n$, with $p_i=\ell(\alpha^{(i)})$ for
$i=1,\ldots,k$, 
the number of colored factorizations of type 
$\alpha^{(1)},\ldots,\alpha^{(k)}$ of an $n$-cycle in $\mathfrak{S}_n$
 is 
\[
C^{\langle n\rangle}_{\alpha^{(1)},\ldots,\alpha^{(k)}} = (n!)^{k-1}\frac{M^{n-1}_{p_1-1,\ldots,p_k-1}}{\binom{n-1}{p_1-1}\cdots
\binom{n-1}{p_k-1}}.
\] 
\end{corollary}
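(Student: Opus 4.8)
The plan is to compare the two expansions of one and the same generating polynomial. Both Theorem~\ref{thm: Jackson cycle type} and Proposition~\ref{prop:change of basis cycle type} provide a formula for
\[
\sum_{\lambda^{(1)},\ldots,\lambda^{(k)}} a_{\lambda^{(1)},\ldots,\lambda^{(k)}}\, p_{\lambda^{(1)}}(\yy_1)\cdots p_{\lambda^{(k)}}(\yy_k),
\]
each written in the monomial basis, with both sums ranging over partitions of $n$. First I would observe that the products $m_{\mu^{(1)}}(\yy_1)\cdots m_{\mu^{(k)}}(\yy_k)$ are linearly independent: the $m_\mu$ form a basis of the ring of symmetric functions in each of the $k$ independent alphabets $\yy_i$, so their products across the $k$ alphabets are a basis of the relevant tensor-product space. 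Consequently I may equate, term by term, the coefficients of a fixed product $m_{\mu^{(1)}}(\yy_1)\cdots m_{\mu^{(k)}}(\yy_k)$ appearing on the two right-hand sides.

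Doing so yields, for every tuple of partitions $\mu^{(i)}$ of $n$,
\[
C^{\langle n\rangle}_{\mu^{(1)},\ldots,\mu^{(k)}} = (n!)^{k-1}\frac{M^{n-1}_{\ell(\mu^{(1)})-1,\ldots,\ell(\mu^{(k)})-1}}{\binom{n-1}{\ell(\mu^{(1)})-1}\cdots\binom{n-1}{\ell(\mu^{(k)})-1}}.
\]
To finish, I would translate this from partitions back to the compositions $\alpha^{(i)}$ of the statement. By the color-relabelling observation recorded just before Proposition~\ref{prop:change of basis cycle type}, one has $C^{\langle n\rangle}_{\alpha^{(1)},\ldots,\alpha^{(k)}} = C^{\langle n\rangle}_{\mu^{(1)},\ldots,\mu^{(k)}}$, where $\mu^{(i)}$ denotes the underlying partition of $\alpha^{(i)}$, and $\ell(\mu^{(i)}) = \ell(\alpha^{(i)}) = p_i$. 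Substituting $\ell(\mu^{(i)}) = p_i$ into the display gives precisely the asserted value of $C^{\langle n\rangle}_{\alpha^{(1)},\ldots,\alpha^{(k)}}$. There is essentially no serious obstacle: the proof is a coefficient extraction, and the only point deserving care is the linear-independence justification for equating coefficients of the monomial-function products across the $k$ alphabets.
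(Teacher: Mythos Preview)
Your proposal is correct and is precisely the argument the paper has in mind: the paper merely states that the corollary is ``an immediate consequence of Theorem~\ref{thm: Jackson cycle type} and Proposition~\ref{prop:change of basis cycle type},'' and your coefficient comparison in the monomial basis, followed by the passage from partitions to compositions via relabelling, spells out exactly this immediacy.
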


In order to prove Theorem~\ref{G(d, 1, n) many factors theorem cycle type}, we follow the same approach involving projections of cycle-colorings as in
Theorem~\ref{G(d, 1, n) many factors theorem}. Given compositions
$\alpha^{(1)},\ldots,\alpha^{(k)}$ of size at most $n$ with
$p_i:=\ell(\alpha^{(i)})$, let
$\mathcal{C}^{(d,1,n)}_{\alpha^{(1)},\ldots,\alpha^{(k)}}$ be the set of
colored factorizations in $\mathcal{C}^{(d,1,n)}_{p_1,\ldots,p_k}$
such that for $i=1,\ldots,k$ and $j=1,\ldots,p_i$,   $\alpha^{(i)}_j$ is the sum of the lengths of the cycles colored with colors in the
strip $\{(j-1)d+1,\ldots,jd\}$. Let
$C_{\alpha^{(1)},\ldots,\alpha^{(k)}}^{(d,1,n)} =
|\mathcal{C}_{\alpha^{(1)},\ldots,\alpha^{(k)}}^{(d,1,n)}|$. By permuting the colors we have that
\begin{equation} \label{eq: permuting colors} 
C_{\alpha^{(1)},\ldots,\alpha^{(k)}}^{(d,1,n)}=C_{\mu^{(1)},\ldots,\mu^{(k)}}^{(d,1,n)},
\end{equation}
where $\mu^{(i)}$ is the underlying partition of the composition
$\alpha^{(i)}$ for $i=1,\ldots,k$.
 
The first part of the proof of the theorem is a formula for
$C^{(d,1,n)}_{\boldsymbol \alpha}$ in terms of the counts $C^{\langle n\rangle}_{\boldsymbol
  \gamma}$ of colored factorizations of the $n$-cycle in
$\mathfrak{S}_n$ reviewed above.

\begin{lemma} \label{G(d, 1, n) lemma cycle type}
Fix compositions $\alpha^{(1)},\ldots,\alpha^{(k)}$ of size at most $n$. If $|\alpha^{(i)}| = n$ for all $i = 1, \ldots, k$ then $C^{(d,1,n)}_{\alpha^{(1)},\ldots,\alpha^{(k)}} = 0$.  Otherwise, 
we have 
\[
C^{(d,1,n)}_{\alpha^{(1)},\ldots,\alpha^{(k)}} = d^{(k-1)n} \cdot C^{\langle n\rangle}_{\gamma^{(1)},\ldots,\gamma^{(k)}},
\]
where $\gamma^{(i)} := \begin{cases}
\alpha^{(i)} &\text{ if } |\alpha^{(i)}| = n \\
\left(n-|\alpha^{(i)}|\right) \oplus \alpha^{(i)}  & \text{ otherwise}
\end{cases}$, and $\oplus$ denotes concatenation.
\end{lemma}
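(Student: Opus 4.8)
The plan is to refine the projection argument from the proof of Lemma~\ref{G(d, 1, n) lemma}, now tracking cycle lengths strip-by-strip. I use exactly the same projection from colored factorizations of $c$ in $G(d,1,n)$ to colored factorizations of the $n$-cycle $\cn$ in $\Symm_n$: a cycle of $u_i$ colored by a color in the $d$-strip $\{(j-1)d+1,\ldots,jd\}$ maps to a cycle of $\pi_i$ colored $j$, while a cycle colored $0$ maps to a cycle colored $0$. First I dispose of the degenerate case. If $|\alpha^{(i)}| = n$ for every $i$, then in each factor every cycle carries a color lying in some strip, so no cycle is colored $0$; since cycles of nonzero weight must receive color $0$, it follows that every cycle of every $u_i$ has weight $0$, whence each $u_i$, and therefore the product $u_1 \cdots u_k = c$, has total weight $0$. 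This contradicts $c$ having weight $1$, so $\mathcal{C}^{(d,1,n)}_{\alpha^{(1)},\ldots,\alpha^{(k)}} = \varnothing$ and the count is $0$.

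Assume henceforth that $|\alpha^{(i)}| < n$ for at least one $i$. Under the projection, the color-$j$ class of $\pi_i$ (for $j = 1,\ldots,p_i$) inherits total cycle length $\alpha^{(i)}_j$, while the cycles mapped to color $0$ have total length $n - |\alpha^{(i)}|$. The key simplification over Lemma~\ref{G(d, 1, n) lemma} is that, once the type is fixed, whether the color $0$ appears in $\pi_i$ is already determined: it appears precisely when $n - |\alpha^{(i)}| > 0$. Consequently there is no sum over subsets $S \subseteq [k]$; every factorization in $\mathcal{C}^{(d,1,n)}_{\alpha^{(1)},\ldots,\alpha^{(k)}}$ projects into the single set $\mathcal{C}^{\langle n\rangle}_{\gamma^{(1)},\ldots,\gamma^{(k)}}$, with $\gamma^{(i)}$ as defined in the statement (using Remark~\ref{remark: independence color sets} to ignore which labels are used for the colors).

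It remains to show every fiber of this projection has size $d^{(k-1)n}$. The lifting argument is verbatim that of Lemma~\ref{G(d, 1, n) lemma}: given a target factorization in $\mathcal{C}^{\langle n\rangle}_{\gamma^{(1)},\ldots,\gamma^{(k)}}$, one assigns weights to the $nk$ entries one at a time in a total order whose last index lies in a cycle of color $0$ (such a cycle exists because some $|\alpha^{(i)}| < n$), assigning the last entry of each nonzero-colored cycle the unique weight that gives that cycle weight $0$ and the globally last entry the unique weight that makes the total weight $1$, and then independently selecting, for each nonzero-colored cycle, one of the $d$ colors within its strip. As before this produces $d^{nk-1}$ lifts of elements of weight $1$ whose underlying permutation is $\cn$, exactly a fraction $d^{-(n-1)}$ of which factor $c$ itself, giving $d^{(k-1)n}$ genuine preimages. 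The single new point to verify is that these lifts respect the type $\alpha^{(1)},\ldots,\alpha^{(k)}$: but the only freedom in the strip-color choice is \emph{within} a fixed strip $j$, which leaves the total length $\alpha^{(i)}_j$ of cycles assigned to that strip unchanged. Hence each preimage lies in $\mathcal{C}^{(d,1,n)}_{\alpha^{(1)},\ldots,\alpha^{(k)}}$, and conversely every such colored factorization arises this way.

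Combining the two cases gives $C^{(d,1,n)}_{\alpha^{(1)},\ldots,\alpha^{(k)}} = d^{(k-1)n}\, C^{\langle n\rangle}_{\gamma^{(1)},\ldots,\gamma^{(k)}}$, as claimed. The substantive combinatorics is inherited from Lemma~\ref{G(d, 1, n) lemma}; the only real work in the refinement is the bookkeeping of the last paragraph, and I expect the type-preservation check to be the one genuinely new (if minor) subtlety, since it is precisely what guarantees that imposing the finer strip-length constraint keeps the fibers of the projection uniform in size and collapses the sum over $S$ to a single term.
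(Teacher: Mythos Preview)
Your proof is correct and follows essentially the same approach as the paper's: both invoke the projection from Lemma~\ref{G(d, 1, n) lemma}, observe that fixing the type $(\alpha^{(i)})$ already determines whether color $0$ is used in each factor (eliminating the sum over $S$), and then cite the $d^{(k-1)n}$ preimage count verbatim. Your write-up is somewhat more explicit than the paper's about why the sum over $S$ collapses and why the lifts respect the strip-length constraints, but there is no substantive difference in strategy.
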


\begin{proof}
Since $c$ has nonzero weight, in any colored factorization $c = u_1 \cdots u_k$ there is some factor $u_i$ of nonzero weight, and this factor has a cycle of nonzero weight.  Consequently, the composition $\alpha^{(i)}$ associated to this factor has size strictly less than $n$.  Therefore, given compositions $\alpha^{(1)},\ldots,\alpha^{(k)}$ with $|\alpha^{(i)}| = n$ for all $i$, the set $\mathcal{C}^{(d,1,n)}_{\alpha^{(1)},\ldots,\alpha^{(k)}}$ is empty.  This completes the first part of the lemma.

Now suppose $|\alpha^{(i)}| < n$ for some $i$.  Given a colored factorization $c = u_1 \cdots u_k$ in $\mathcal{C}^{(d,1,n)}_{\alpha^{(1)},\ldots,\alpha^{(k)}}$ of the Coxeter
element $c$ for $G(d, 1, n)$, we associate to it
a colored factorization $\cn = \pi_1 \cdots \pi_k$ of the $n$-cycle
$\cn$ in $\Symm_n$ with the same projection as in the proof of Lemma~\ref{G(d, 1, n) lemma}. Thus, in the resulting colored factorization of $\cn$,
the $i$th factor $\pi_i$ is colored in either $\ell(\alpha^{(i)})$ or $\ell(\alpha^{(i)})+1$ colors depending on whether or not  $\pi_i$ has a cycle
colored $0$, with every color appearing. 
Moreover, for $t=1,\ldots,\ell(\alpha^{(i)})$, the sum of the lengths of the cycles colored
$t$ is $\alpha^{(i)}_t$, and the sum of the
lengths of the cycles colored $0$ is $n-|\alpha^{(i)}|$. Therefore, the resulting colored factorization $\cn =
\pi_1 \cdots \pi_k$ is in
$\mathcal{C}^{\langle n\rangle}_{\gamma^{(1)},\ldots,\gamma^{(k)}}$ where
$\gamma^{(i)}$ is as defined in the statement of the lemma.  

Second, we must consider how many preimages each factorization in
$\mathcal{C}^{\langle n\rangle}_{\gamma^{(1)},\ldots,\gamma^{(k)}}$ has under this
projection map. By the same
  argument as in Lemma~\ref{G(d, 1, n) lemma} the number of preimages is
  $d^{n(k-1)}$.
\end{proof}

\begin{proof}[Proof of Theorem~\ref{G(d, 1, n) many factors theorem cycle type}]
Let $u$ be an element of $G(d,1,n)$ and consider a cycle of $u$ of size $m$. The symmetric
function 
\[
p_m(1, \underbrace{ y_1, \ldots, y_1}_{\textstyle d},
\underbrace{ y_2, \ldots, y_2}_{\textstyle d}, \ldots) = d \cdot
p_m(y_1,y_2,\ldots) + 1
\] 
is the generating function for coloring such a cycle with color set $\ZZ_{\geq 0}$, where the
exponent of the variable $y_j$ records how many elements of $[n]$ belong to a cycle that is colored from the $j$th $d$-strip $\{(j-1)d+1, \ldots, jd\}$.
Thus the LHS of \eqref{equation G(d,1,n) many factors cycle type} is the generating function of colored factorizations
$c=u_1\cdots u_k$ of the Coxeter element $c$ in which the cycles of
factor $u_i$ are colored with the color set $\ZZ_{\geq 0}$, where
the elements colored in the $d$-strip $\{(j-1)d+1,\ldots,jd\}$ are
encoded with the variable $y^{(i)}_j$. To count these
factorizations by the total number of elements in the cycles colored  in
each $d$-strip, we change basis to monomials $({\yy^{(i)}})^{\alpha^{(i)}}$, yielding
\[
\sum_{\lambda^{(1)},\ldots,\lambda^{(k)}} a_{\lambda^{(1)},\ldots,\lambda^{(k)}}^{(d)}
\prod_{i=1}^k p_{\lambda^{(i)}}(1, \underbrace{y^{(i)}_1, \ldots,
  y^{(i)}_1}_{d},  \underbrace{y^{(i)}_2, \ldots, y^{(i)}_2}_{d},
\ldots) =
\sum_{\alpha^{(1)},\ldots,\alpha^{(k)}}
C^{(d,1,n)}_{\alpha^{(1)},\ldots,\alpha^{(k)}} ({\yy^{(1)}})^{\alpha^{(1)}} \cdots ({\yy^{(k)}})^{\alpha^{(k)}},
\]
where the sum on the right is over weak compositions $\alpha^{(i)}$ of numbers no larger than $n$.
By Lemma~\ref{G(d, 1,
  n) lemma cycle type} and Corollary~\ref{corollary symmetric group cycle type} we have that 
\[
C^{(d,1,n)}_{\alpha^{(1)},\ldots,\alpha^{(k)}} = d^{(k-1)n} (n!)^{k-1} \frac{M^{n-1}_{q_1-1,\ldots,q_k-1}}{\prod_{i=1}^k
  \binom{n-1}{q_i-1}},
\]
where $q_i = \begin{cases}
\ell(\alpha^{(i)}) & \text{ if } |\alpha^{(i)}| =n\\
\ell(\alpha^{(i)})+1 & \text{ otherwise}
\end{cases}$ for $i=1,\ldots,k$. Finally, by \eqref{eq: permuting colors} we
can combine monomials $({\yy^{(i)}})^{\alpha^{(i)}}$  with the same
underlying partition $\mu^{(i)}$ to rewrite the RHS in the basis
$m_{\mu^{(i)}}$, as desired. 
\end{proof}

\begin{remark}
Theorem~\ref{G(d, 1, n) many factors theorem} can be recovered from
Theorem~\ref{G(d, 1, n) many factors theorem cycle type} as
follows. We do a
{stable
principal specialization} $\yy_i = 1^{x_i}$ for $i=1,\ldots,k$ (see
\eqref{eq: stable specialization}) in \eqref{equation G(d,1,n) many factors cycle type}. One has
\[
  p_{\lambda}(1, \underbrace{y_1, \ldots,
  y_1}_{d},  \underbrace{y_2, \ldots, y_2}_{d},
\ldots)\,\,
{\Bigg |}_{\yy = 1^{x}} 
\,\, =\,\, (xd+1)^{\ell(\lambda)}.
\]
This gives 
\begin{multline} \label{eq1: proof specialization cycle type}
\sum_{r_1,\ldots,r_k} \left(\sum_{\lambda^{(i)} \colon
    \ell(\lambda^{(i)})=r_i}
  a^{(d)}_{\lambda^{(1)},\ldots,\lambda^{(k)}} \right)
(x_1d+1)^{r_1} \cdots (x_kd+1)^{r_k} \,=\, \\
= |G|^{k-1}
\sum_{p_1,\ldots,p_k}  \binom{x_1}{p_1}\cdots \binom{x_k}{p_k} \sum_{\alpha^{(i)}\colon \ell(\alpha^{(i)})=p_i}
\frac{M^{n-1}_{q_1-1,\ldots,q_k-1}}{\binom{n-1}{q_1-1}\cdots \binom{n-1}{q_k-1}},
\end{multline}
where the sum on the RHS is over compositions $\alpha^{(i)}$ of size at most
$n$ with $p_i$ parts, with not all compositions of size $n$. 
Grouping tuples of compositions according to the subset of indices $i$
with $|\alpha^{(i)}| < n$, one counts the compositions in each case to
cancel the binomials, applies Proposition~\ref{prop:recurrence Ms}, and does a substitution $(x_id+1) \mapsto x_i$ to finish. 
\end{remark}

\subsection{The genus-$0$ case}

The leading terms in the LHS of the equation in Theorem~\ref{thm: Jackson cycle type}, 
of degree $\sum_i \ell(\lambda^{(i)}) = n(k-1)+1$,
count genus-$0$ factorizations for $\mathfrak{S}_n$ by the cycle type of the factors. 
Goulden and Jackson \cite[Thm.~3.2]{GJ} used Lagrange inversion to give a formula for such factorizations, called the \emph{cactus formula}:
\[
a_{\mu^{(1)},\ldots,\mu^{(k)}} =  n^{k-1}\cdot \prod_{i=1}^k \frac{
(\ell(\mu^{(i)})-1)!}{\Aut(\mu^{(i)})}.
\]
Krattenthaler and M\"uller \cite{KMtypeB} gave formulas counting genus-$0$ factorizations by cycle type for signed permutations (type $B_n$) and even-signed permutations (type $D_n$), also using Lagrange inversion. As an application of Theorem~\ref{G(d, 1, n) many factors theorem cycle type}, we obtain a formula for the genus-$0$ factorizations for $G=G(d,1,n)$ by their weight-$0$ cycle type that is independent of $d$ and coincides with the type $B_n$ formulas in \cite[Thm.~7(i)]{KMtypeB}. Since $\ell(\lambda_0(w)) = \dim\fix(w)$, the genus-$0$ factorizations are those counted by $a^{(d)}_{\lambda^{(1)}, \ldots, \lambda^{(k)}}$ such that $\sum_i \ell(\lambda^{(i)}) = n(k - 1)$. 

\begin{corollary} \label{cor: cycle type genus 0}
For $d>1$, let $G=G(d,1,n)$, and let $c$ be a Coxeter element in
$G$. Given partitions $\lambda^{(1)},\ldots,\lambda^{(k)}$ of size at most
$n$ with $\sum_{i=1}^k \ell(\lambda^{(i)}) = n(k-1)$, we have that
$a^{(d)}_{\boldsymbol \lambda}$ is nonzero if and only if there is a unique $j$ such that $|\lambda^{(j)}| < n$, and in this case 
\[
a^{(d)}_{\lambda^{(1)},\ldots,\lambda^{(k)}} = n^{k-1}\cdot \ell(\lambda^{(j)}) \cdot \prod_{i=1}^k \frac{
(\ell(\lambda^{(i)})-1)!}{\Aut(\lambda^{(i)})}.
\]
\end{corollary}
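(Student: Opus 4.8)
The plan is to extract the top-degree (genus-$0$) part of the identity in Theorem~\ref{G(d, 1, n) many factors theorem cycle type}. I grade the monomial symmetric functions on its right-hand side by \emph{total length} $L(\boldsymbol{\mu}) := \sum_{i=1}^k \ell(\mu^{(i)})$. By Proposition~\ref{prop:reflection length} together with the subadditivity of $\codim\fix(-)$ recalled at the start of Section~\ref{sec:applications}, one has $a^{(d)}_{\boldsymbol{\lambda}} = 0$ whenever $\sum_i \ell(\lambda^{(i)}) > n(k-1)$; thus the genus-$0$ tuples are exactly those of maximal total length $n(k-1)$. The point is that at this top length the change of basis on both sides of \eqref{equation G(d,1,n) many factors cycle type} becomes triangular, so that the coefficient of a single product $\prod_i m_{\lambda^{(i)}}(\yy_i)$ isolates $a^{(d)}_{\boldsymbol{\lambda}}$.

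First I would analyze the left-hand side. Using $p_m(1, \underbrace{y_1, \ldots, y_1}_{d}, \ldots) = d\,p_m(\yy) + 1$ gives $p_{\lambda^{(i)}}(1, \ldots) = \sum_{\nu \subseteq \lambda^{(i)}} d^{\ell(\nu)} p_\nu(\yy_i)$, the sum ranging over sub-multisets of the parts of $\lambda^{(i)}$, together with the standard expansion $p_\nu = \Aut(\nu)\,m_\nu + (\text{terms } m_\kappa \text{ with } \ell(\kappa) < \ell(\nu))$. Combining these with the length bound above, the only way to reach total length $n(k-1)$ is to choose $\nu = \lambda^{(i)}$ in every factor and to take the leading term of each $p_{\lambda^{(i)}}$; hence the length-$n(k-1)$ part of the LHS equals $\sum_{\boldsymbol{\lambda}} a^{(d)}_{\boldsymbol{\lambda}} \prod_i d^{\ell(\lambda^{(i)})} \Aut(\lambda^{(i)})\, m_{\lambda^{(i)}}(\yy_i)$, summed over genus-$0$ tuples.

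Next I would read off the right-hand side at total length $n(k-1)$. With $q_i = \ell(\mu^{(i)}) + [\,|\mu^{(i)}| < n\,]$ as in the theorem, the vanishing $M^{n-1}_{\mathbf{q} - \mathbf{1}} = 0$ for $\sum_i(q_i - 1) > (n-1)(k-1)$ translates to $\sum_i q_i \le n(k-1)+1$, that is, $\#\{i : |\mu^{(i)}| < n\} \le 1$; combined with the constraint ``not all of size $n$'' in the sum, this forces \emph{exactly one} index $j$ with $|\mu^{(j)}| < n$. This already establishes the stated support condition, since the cases of zero or at least two such indices contribute a vanishing coefficient (respectively because the term is absent from the sum, or because $M^{n-1}_{\mathbf{q}-\mathbf{1}} = 0$). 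For the surviving term, $\sum_i(q_i - 1) = (n-1)(k-1)$ is extremal, so Proposition~\ref{prop:Ms2multinomial} evaluates $M^{n-1}_{\mathbf{q}-\mathbf{1}} = (n-1)! / \prod_i (n - q_i)!$.

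Finally I would equate the coefficients of $\prod_i m_{\lambda^{(i)}}(\yy_i)$ for a genus-$0$ tuple $\boldsymbol{\lambda}$ with unique small index $j$. Substituting $|G| = d^n n!$, the factor $\prod_i d^{\ell(\lambda^{(i)})} = d^{n(k-1)}$ cancels exactly against the power of $d$ in $|G|^{k-1}$; inserting $q_i = \ell(\lambda^{(i)})$ for $i \neq j$ and $q_j = \ell(\lambda^{(j)}) + 1$ and simplifying the ratio $M^{n-1}_{\mathbf{q}-\mathbf{1}} / \prod_i \binom{n-1}{q_i - 1}$ collapses the $(n-\ell_i)!$ factors and yields precisely $n^{k-1} \cdot \ell(\lambda^{(j)}) \cdot \prod_i (\ell(\lambda^{(i)})-1)!/\Aut(\lambda^{(i)})$. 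The only genuinely delicate point is the triangularity used in the first two steps: verifying that at total length $n(k-1)$ no tuple $\boldsymbol{\lambda}' \neq \boldsymbol{\lambda}$ contributes to the coefficient of $\prod_i m_{\lambda^{(i)}}$. This rests on the genus bound $a^{(d)}_{\boldsymbol{\lambda}'} = 0$ for $L(\boldsymbol{\lambda}') > n(k-1)$ (to kill contributions from longer tuples) and on the fact that $m_{\lambda^{(i)}}$ appears in $p_\nu$ with $\ell(\nu) = \ell(\lambda^{(i)})$ only when $\nu = \lambda^{(i)}$ (to kill the off-diagonal shorter terms).
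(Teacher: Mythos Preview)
Your proof is correct and follows essentially the same approach as the paper's: both extract the top-length contributions from the identity in Theorem~\ref{G(d, 1, n) many factors theorem cycle type}, use the vanishing of $M^{n-1}_{\mathbf{q}-\mathbf{1}}$ beyond the extremal range to force exactly one small index, and then apply Proposition~\ref{prop:Ms2multinomial} to evaluate the surviving coefficient. The only cosmetic difference is that the paper pairs both sides with $\prod_i p_{\lambda^{(i)}}(\yy_i)/(d^{\ell(\lambda^{(i)})} z_{\lambda^{(i)}})$ under the Hall inner product (so the LHS triangularity comes for free from $\langle p_\lambda, p_\mu\rangle = \delta_{\lambda\mu} z_\lambda$, and on the RHS one uses that $\langle p_\lambda, m_\mu\rangle \neq 0$ forces $\mu$ to refine $\lambda$), whereas you expand directly in the $m$-basis and argue the triangularity by hand via $p_\nu = \Aut(\nu)\, m_\nu + (\text{shorter})$ together with the genus bound; the two computations are dual and yield the same simplification.
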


\begin{proof}
Let $F$ denote the generating function in \eqref{equation G(d,1,n)
  many factors cycle type} and let $\langle \cdot ,\cdot \rangle$ denote the
usual Hall inner product of symmetric functions, for which 
$\langle p_\lambda, p_\mu\rangle = \delta_{\lambda\mu} z_\lambda$. Fix a tuple of partitions $(\lambda^{(i)})$ with $\sum_{i=1}^k
\ell(\lambda^{(i)}) = n(k-1)$.  On one hand, thinking about the LHS
formula for $F$ in \eqref{equation G(d,1,n) many factors cycle type}, we have
\[
a^{(d)}_{\lambda^{(1)},\ldots,\lambda^{(k)}} = \left \langle \prod_i \frac{p_{\lambda^{(i)}}(\yy_i)}{d^{\ell(\lambda^{(i)})} z_{\lambda^{(i)}}}, F\right \rangle,
\]
since the top-degree term of $p_\lambda(1, y_1, \ldots, y_1, y_2, \ldots, y_2, \ldots)$ is $d^{\ell(\lambda)} p_{\lambda}(\yy_i)$.
On the other hand, thinking about the RHS formula for
$F$ in \eqref{equation G(d,1,n) many factors cycle type}, we have that
\[
 \left \langle \prod_i p_{\lambda^{(i)}}(\yy_i), F\right \rangle = |G|^{k-1} \sum_{\mu^{(1)},\ldots,\mu^{(k)}} \frac{M^{n-1}_{q_1-1,\ldots,q_k-1}}{\prod_{i=1}^k
  \binom{n-1}{q_i-1}}
\prod \langle p_{\lambda^{(i)}}(\yy_i), m_{\mu^{(i)}}(\yy_i)\rangle.
 \]
In order for $ \langle p_{\lambda^{(i)}}, m_{\mu^{(i)}}\rangle$ not to be zero, we must have that $\mu^{(i)}$ is a refinement of $\lambda^{(i)}$, and so in particular $\ell(\mu^{(i)}) \geq \ell(\lambda^{(i)})$ for each $i$.  It follows that, in order for a term to contribute, it must be the case that
\begin{align*}
\sum_i (q_i - 1) & = \sum_i \ell(\mu^{(i)}) - \#\{i \colon |\mu^{(i)}| = n\} \\
& \geq \sum_i \ell(\lambda^{(i)}) - \#\{i \colon |\mu^{(i)}| = n\} \\
& = n(k - 1) - \#\{i \colon |\mu^{(i)}| = n\} \\ 
& \geq (n - 1)(k - 1),
\end{align*}
where in the penultimate step we use the fact that no terms appear on the RHS with all $\mu$s of size $n$.  On the other hand, in order for $M^{n - 1}_{q_1 - 1, \ldots, q_k - 1}$ to be nonzero, it must be the case that $\sum (q_i - 1) \leq (n - 1)(k - 1)$.  Thus, the $(\mu^{(i)})$ term on the right contributes if and only if $\mu^{(i)} = \lambda^{(i)}$ for all $i$ and also exactly $k - 1$ of the $\lambda^{(i)}$ have size $n$.  That is, if there is any nonzero contribution at all, it comes from the single term when $\mu^{(i)} = \lambda^{(i)}$ for all $i$, and in this case we have
\begin{align*}
 \left \langle \prod_i p_{\lambda^{(i)}}(\yy_i), F\right \rangle & = |G|^{k-1} \frac{M^{n-1}_{q_1-1,\ldots,q_k-1}}{\prod_{i=1}^k
  \binom{n-1}{q_i-1}}
\prod_{i=1}^k \langle p_{\lambda^{(i)}}, m_{\lambda^{(i)}}\rangle \\
& = |G|^{k - 1} \frac{\binom{n - 1}{n - q_1; \ldots ; n - q_k}} {\prod_{i=1}^k
  \binom{n-1}{q_i-1}} \prod_{i=1}^k \frac{z_{\lambda^{(i)}}}{\Aut(\lambda^{(i)})} 
\end{align*}
and therefore
\[
a^{(d)}_{\lambda^{(1)},\ldots,\lambda^{(k)}}  =  \frac{|G|^{k - 1}}{d^{\sum\ell(\lambda^{(i)})}} \frac{\binom{n - 1}{n - q_1; \ldots ; n - q_k}} {\prod_{i=1}^k
  \binom{n-1}{q_i-1}} \prod_{i=1}^k \frac{1}{\Aut(\lambda^{(i)})}
=
n^{k - 1} \prod_{i=1}^k \frac{(q_i-1)!}{\Aut(\lambda^{(i)})},
 \]
and the result follows immediately.
\end{proof}

\section{Remarks and open problems}
\label{sec:remarks}

\subsection{A uniform formula?}
\label{sec:uniform}

In light of the beautiful Chapuy--Stump formula (Theorem~\ref{thm:cs}), the main question raised by our work is the following one. 
\begin{question}
\label{uniform?}
Is there a uniform answer that incorporates the various formulas that appear in Theorems~\ref{G(d, 1, n) many factors theorem} and~\ref{thm:rank 2} and Proposition~\ref{rank 3 best}?
\end{question}

As mentioned in Remark~\ref{Orlik--Terao remark} and
Section~\ref{sec:exceptional higher ranks}, one obstruction is that
for the cases \emph{not} covered by these theorems (and in particular,
for $G(d, d, n)$ when $n > 3$), there are more than one set of
Orlik--Solomon coexponents, and so there is no obvious choice of basis
in which to write the factorization polynomial.

Our work on $G(d, d,
n)$ suggests another approach towards a uniform statement, namely
studying factorizations with a transitivity condition. This leads to the
next question.

\begin{question}
Can one give a definition of \emph{transitive factorization}, valid for all well generated complex reflection groups, generalizing the definition for $G(d, d, n)$ of Section~\ref{sec:ddn}?
\end{question}

One would hope that such a definition would yield a uniform generalization of Theorem~\ref{thm:mainG(d,d,n) k factors} and the aforementioned results.  

Another approach might start with the work of Douvropoulos \cite{Theo}.  He uses a natural grouping on the irreducible characters of an arbitrary well generated complex reflection group to give an elegant, uniform proof of the Chapuy--Stump formula, as well as a weighted generalization.  The grouping exhibits cancellation similar to what we observe in the proof of Theorem~\ref{thm:rank 2}.

\subsection{Maps and constellations}
\label{sec:maps}
In the symmetric group, combinatorial proofs of the formulas
counting factorizations of the long cycle into two or more factors are phrased in terms of {\em
  maps} or {\em constellations}, that is, certain graphs embedded in surfaces (e.g., see \cite{LZ,JacksonVisentin,GS}).
We briefly discuss two possible variants of maps for encoding
factorizations of a Coxeter element into two factors of the complex reflection group $G(d,1,n)$. 

The first version of maps is to encode the underlying factorization in the
symmetric group with the usual {\em rooted unicellular bipartite map}
\cite[\S  1.3.3, 1.5.1]{LZ} \cite[\S 3.1.2]{JacksonVisentin} \cite[Ex.~2.1]{SchaefferVassilieva} and add the
cyclic group weights on the edges, as follows.
Given a factorization $u\cdot v$ of the Coxeter element $c$ in $G(d,1,n)$, we assign to this factorization a map, with the following conventions (see Figure~\ref{fig:map}):
\begin{itemize}
\item the map has $n$ edges labelled $1,2,\ldots,n$;
\item each cycle of $u$ corresponds to a black vertex and each cycle of $v$ corresponds to a white vertex;
\item the labels of the edges incident to a black vertex are the entries in the corresponding cycle of $u$, and appear in clockwise order around the vertex; 
and similarly for white vertices and $v$; and
\item the edge labelled $i$ has additional labels corresponding to the weight of the $i$th column of $u$ and $i$th row of $v$.
\end{itemize}
The fact that $u\cdot v = c$ means that if we start at the rooted black vertex
and traverse the map, recording the labels on the edges when we go from a black to a white vertex, then we see the edges in the order $1, 2, \ldots, n$ given by the long cycle $\cn$. 

\begin{figure}
\centering
  \begin{subfigure}[b]{0.4\textwidth}
\includegraphics[scale=0.9]{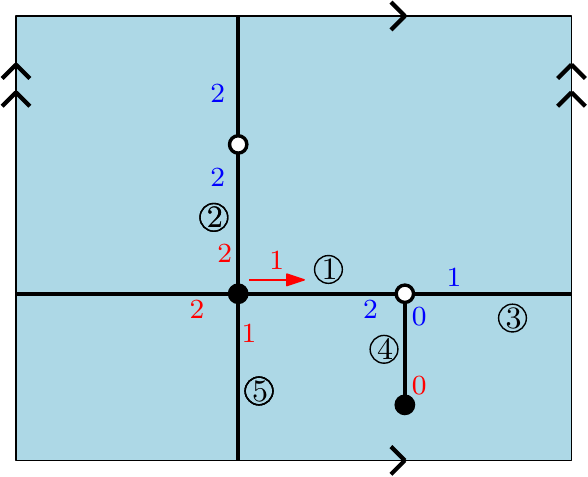}
\caption{}
\label{fig:map}
  \end{subfigure}
  \begin{subfigure}[b]{0.4\textwidth}
\includegraphics[scale=0.6]{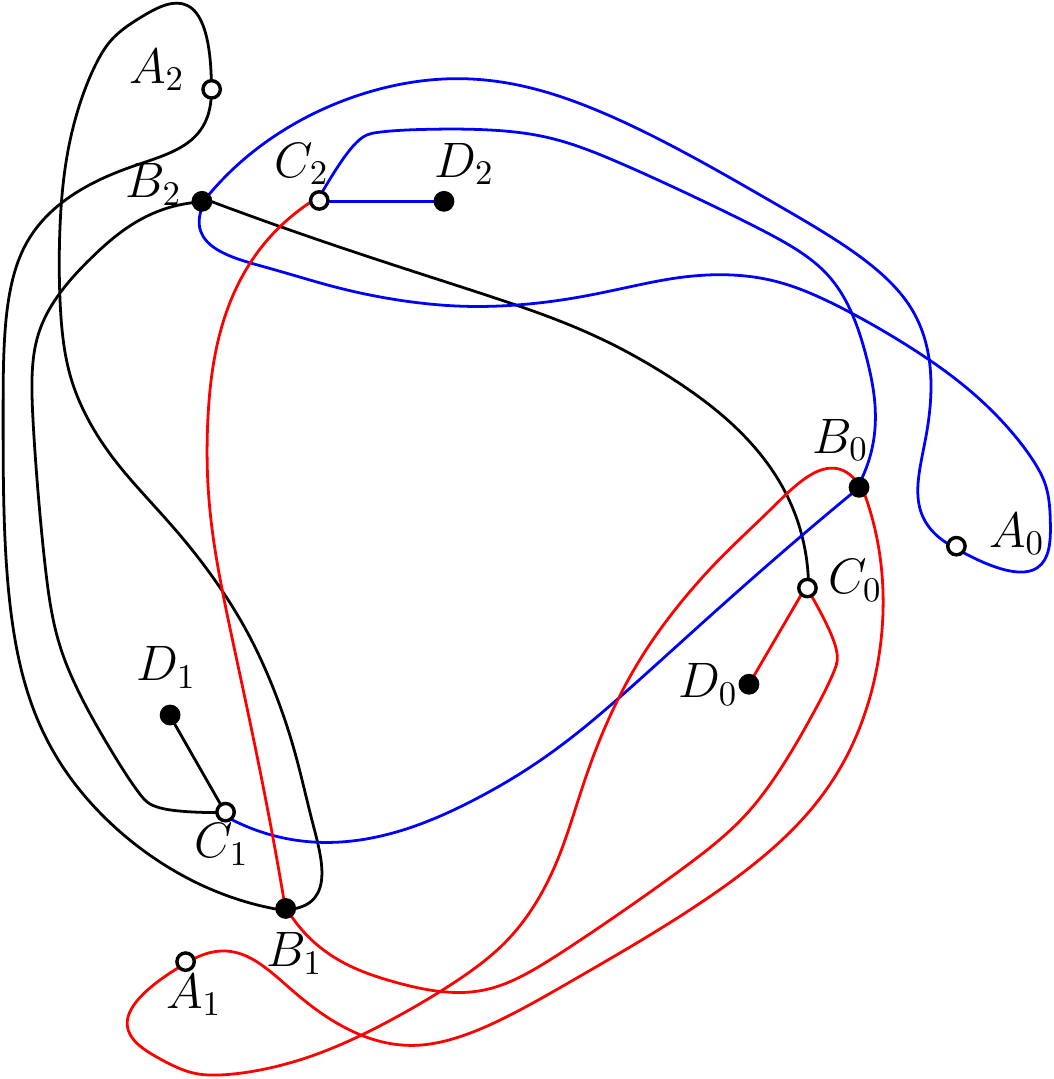}
\caption{}
\label{fig:unfolded map}
  \end{subfigure}
\caption{(a) The genus-$1$ map of the factorization 
$c = [(1532)(4); (1, 2,2,0,1)] \cdot [(134)(25); (1, 2,0,2, 2)]$ in Example~\ref{example:map}. (b) An ``unfolding'' of the map on the left, using the (blue) $v$-weights to determine which copies of the vertices to connect with each edge.}
\end{figure}

\begin{example}
\label{example:map}
Let $u=[(1532)(4); (1, 2,2,0,1)]$ and $v=[(134)(25); (1, 2,0,2, 2)]$ be the following elements in $G(3,1,5)$:
\[
u = \begin{bmatrix}
 & z^2  &  &  & \\
 &  & z^2 &  &\\
& & & & z\\
& & & 1 & \\
z & & & & 
\end{bmatrix}, \quad 
v = \begin{bmatrix}
 & &  &z^2 & \\
& &  && z^2 \\
z &  & & & \\
& & 1  & & \\
& z^2 & & & 
\end{bmatrix}, \quad u\cdot v = \begin{bmatrix}
 &  &  & & z\\
1&  & &  & \\
& 1 &  & & \\
&  & 1 &  & \\
& & &  1&  
\end{bmatrix} = c.
\] 
The corresponding weighted map is shown in Figure~\ref{fig:map}.
\end{example}

In the case of genus-$0$ factorizations with two factors, there are known
combinatorial objects that correspond to the factors: in $\Symm_n$,
the number of elements that can appear in a
genus-$0$ factorization of the long cycle $\cn$ is the Catalan number $C_n$, and they are in natural correspondence with noncrossing partitions on $n$ points.  In $G(d, 1, n)$ for $d > 1$, there are $\binom{2n}{n}$ (the type $B$ Catalan number) elements that can appear in a genus-$0$ factorization of the Coxeter element $c$, and they are in natural correspondence with noncrossing partitions on $dn$ points having $d$-fold rotational symmetry \cite{Reiner} -- see Figure~\ref{fig:nc}.
\begin{figure}
\includegraphics{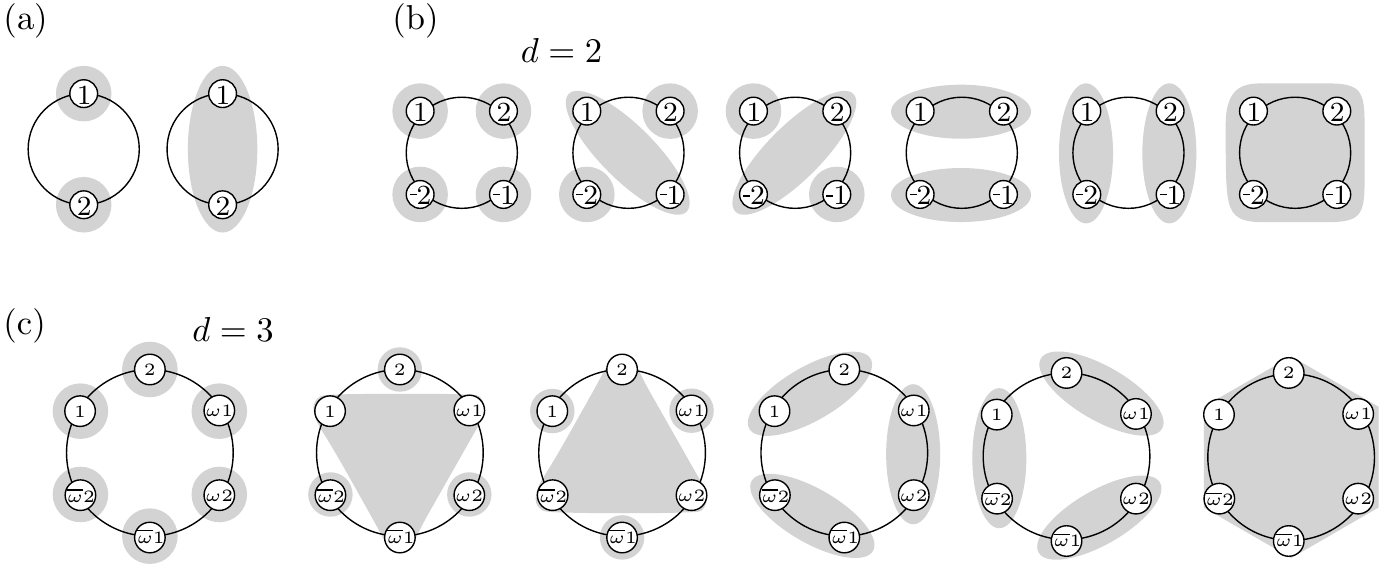}
\caption{(a) The $C_2 = 2$ set partitions on $n = 2$ points are both noncrossing.  (b, c) The $\binom{2 \cdot 2}{2} = 6$ noncrossing set partitions on $2d$ points with $d$-fold rotational symmetry, for $d = 2$ and $d = 3$. }
\label{fig:nc}
\end{figure}
Thus, one might hope that the map objects corresponding to
factorizations in $G(d, 1, n)$ should also have a $d$-fold symmetry.
This gives rise to a second version of maps on surfaces, in which one
``unfolds'' a weighted map into an object with $d$ copies of each vertex and edge, where an edge of weight $m$ that connected vertices $A$ and $B$ unfolds to edges connecting copies $A_{i}$ and $B_{i + m}$ (with indices in $\ZZ/d\ZZ$) -- see Figure~\ref{fig:unfolded map}.

Unfortunately, neither map correspondence seems completely
satisfactory: the topological genus of the map is controlled by the total number of cycles in the factors, rather than the number of weight-$0$ cycles. (For example, in the first variant, the genus of the map is exactly the genus of the underlying factorization in $\Symm_n$.)  Is there a better correspondence between factorizations in $G(d, 1, n)$ and some kind of maps?  

\subsection{Better combinatorial proofs in the symmetric group?}
\label{S_n questions}
Our work suggests (at least) two natural questions purely in the context of the symmetric group.  The first concerns the $(n - 1)$-cycle.
\begin{question}
\label{n - 1 cycle question}
Can one give a combinatorial proof of Theorem~\ref{S_n (n-1) cycle
  theorem k factors}, counting transitive factorizations of an $(n -
1)$-cycle in $\Symm_n$ into $k$ factors? 
\end{question}

For example, one might try to explain why 
$n^{k-1} (n-1)^k C^{\langle n-1,1\rangle}_{\bf p}  = p_1\cdots p_k C^{\langle n \rangle}_{\bf p+1}$. 
We are able to give a combinatorial proof for the case of $k=2$
factors. However, it does not seem to extend easily to general
$k$. We state this proof in terms of permutations but it could equivalently
be phrased in terms of maps using a contraction of a digon.

\begin{figure}
\centering
\includegraphics{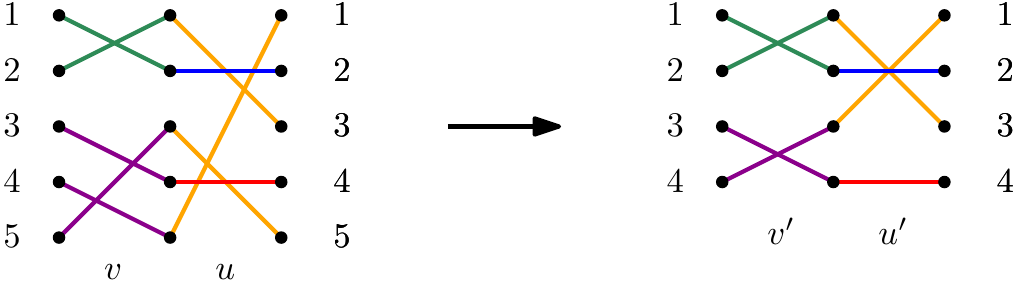}
\caption{The transitive factorization $(1234)(5) = (135)(2)(4)\cdot (12)(345)$ of a $4$-cycle in $\Symm_5$ is one of four that correspond to the factorization $(1234) = (13)(2)(4) \cdot (12)(34)$ of a $4$-cycle in $\Symm_4$.}
\label{fig:two factor (n-1)-cycle}
\end{figure}

\begin{proof}[Proof of Theorem~\ref{S_n (n-1) cycle theorem k factors} for $k = 2$ factors]
We give an $(n - 1)$-to-$1$ map from transitive factorizations of a fixed $(n - 1)$-cycle in $\Symm_n$ to factorizations of a fixed $(n - 1)$-cycle in $\Symm_{n - 1}$.

Without loss of generality, let $\cnn = (1 \cdots (n - 1))(n)$.  Thus for any factorization $u \cdot v = \cnn$ in $\Symm_n$, we have $v(n) = u^{-1}(n) =: t$.  This factorization is transitive if and only if the common value $t$ belongs to $[n - 1]$, and equivalently the value $n$ belongs to a cycle of length longer than $1$ in both $u$ and $v$.  Then $u' := u \cdot (t\; n)$ and $v' := (t\; n) \cdot v$ are the permutations that result from deleting $n$ from its cycle in $u$ and $v$, respectively; we view them as permutations in $\Symm_{n - 1}$ (rather than as permutations in $\Symm_n$ that fix $n$).  Moreover, $u'$ and $v'$ have the same number of cycles as $u$ and $v$, respectively.

On the other hand, for any factorization $u' \cdot v' = (1 \cdots (n - 1))$ in $\Symm_{n - 1}$ and any $t \in [n - 1]$, we may think of $u'$ and $v'$ as elements of $\Symm_n$ that fix $n$; then $u := u' \cdot (t\; n)$ and $v := (t \; n) \cdot v'$ are two permutations in $\Symm_n$ such that $u \cdot v = \cnn$ is a transitive factorization, and $u$ and $v$ have the same number of cycles as $u'$ and $v'$, respectively.

It follows that the number $b_{r, s}$ of transitive factorizations of $\cnn$ using factors with $r$ and $s$ cycles is equal to $n - 1$ times the number $a^{(n - 1)}_{r, s}$ of factorizations of an $(n - 1)$-cycle in $\Symm_{n - 1}$ using factors with $r$ and $s$ cycles.  By Theorem~\ref{S_n theorem}, one has
\begin{align*}
\sum_{r, s} b_{r, s} x^r y^s & = (n - 1) \cdot \sum_{r, s} a^{(n - 1)}_{r, s} x^r y^s \\
& = (n - 1) \cdot (n - 1)! \cdot \sum_{p,q} \binom{n-2}{p-1; q-1; n - p - q} \frac{(x)_{p}}{p!} \frac{(y)_{q}}{q!} \\
& = \frac{n!}{n^2} \cdot \sum_{p, q} \binom{n}{p; q; n - p - q}\frac{(x)_{p}}{(p - 1)!} \frac{(y)_{q}}{(q - 1)!}.
\end{align*}
When $k = 2$, $M^n_{p, q} = \binom{n}{p; q; n - p - q}$, and so this completes the proof.
\end{proof}

The correspondence used in the proof also behaves well with colored factorizations with $k = 2$ factors.

The existing combinatorial proofs of Theorem~\ref{S_n theorem} in the
case of $k$ factors use involved arguments with
maps or constellations (as in Section~\ref{sec:maps}), the BEST theorem, the matrix-tree theorem, and
sign-reversing involutions; see \cite{BM1,BM2}. The second question concerns the possibility of ``nicer'' combinatorial proofs for the $n$-cycle.  
\begin{question}
Proposition~\ref{prop:recurrence Ms} and Corollary~\ref{S_n corollary} imply the recurrence relation
\[
C^{\langle n \rangle}_{p_1,\ldots,p_k} = n^{k-1} \sum_{S \subsetneq [k]} C^{\langle n-1\rangle}_{\pp-\ee_S}
\]
for counts of colored factorizations of long cycles.  Is there a
direct combinatorial proof of this relation, perhaps in terms of
Bernardi's  {\em
  tree-rooted maps} (see \cite{B,BM1})?
\end{question}

\subsection{More refined counting: tracking cycles by weights}

While tracking the number of cycles of weight $0$ is natural from the
geometric perspective, from the combinatorial point of view it makes
just as much sense to ask for the full distribution of cycle weights.
Building on the work of Chapuy--Stump \cite[\S5.3]{ChapuyStump},
we were able to prove the
following result using the character-theory approach.
In contrast to the rest of the paper, the statement of this result \emph{does} depends on the weight of the Coxeter element chosen.
\begin{theorem}
\label{prop:all weights}
For $d > 1$, let $G = G(d, 1, n)$ and let $c$ be a Coxeter element in
$G$ of weight $1$.  For $i = 1, \ldots, k$, let $\rr_i = (r_{i, 0},
\ldots, r_{i, d - 1})$ be a tuple of nonnegative integers, and let
$a^{(d)}_{\rr_1, \dots, \rr_k}$ be the number of factorizations $c =
u_1 \cdots u_k$ of $c$ as a product of $k$ factors such that $u_i$ has
exactly $r_{i, j}$ cycles of weight $j$ for each $j = 0, \ldots, d -
1$.  Let $\xx_i$ denote the variable set $\{x_{i, 0}, \ldots, x_{i, d
  - 1}\}$.  Then
 \[
\sum_{\rr_1, \ldots, \rr_k} a^{(d)}_{\rr_1, \dots, \rr_k} \xx_1^{\rr_1} \cdots \xx_k^{\rr_k}
=
|G|^{k - 1} \sum_{t^d = 1}   \sum_{\pp} M^{n - 1}_{p_1 - 1, \ldots, p_k - 1}  \prod_i \binom{\left(x_{i, 0} + t x_{i, 1} + \dots + t^{d - 1} x_{i, d - 1}\right)/d}{p_i}.
\] 
\end{theorem}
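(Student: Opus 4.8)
The plan is to run the character-theoretic (Frobenius) method of Lemma~\ref{frobenius}, refining the computation of Chapuy--Stump \cite[\S5.3]{ChapuyStump} so that it records cycle weights rather than only weight-$0$ cycles. For each factor index $i$, introduce the class function $f_i$ on $G = G(d,1,n)$ given by $f_i(g) = \prod_{\gamma} x_{i, w(\gamma)}$, the product over cycles $\gamma$ of $g$ of the variable recording the weight $w(\gamma)$ of $\gamma$; then $f_i(g) = \prod_j x_{i,j}^{r_{i,j}(g)}$ and the left-hand side of the theorem is exactly $\sum_{u_1\cdots u_k = c}\prod_i f_i(u_i)$. Since each $f_i$ is a class function, extending it linearly to the group algebra produces a central element, and applying Frobenius with these in place of the $\zz_i$ gives
\[
\sum_{u_1\cdots u_k=c}\prod_i f_i(u_i) \;=\; \frac{1}{|G|}\sum_{\lambda\in\Irr(G)}\dim(\lambda)\,\chi_\lambda(c^{-1})\prod_{i=1}^k \frac{1}{\dim(\lambda)}\sum_{g\in G} f_i(g)\,\chi_\lambda(g).
\]

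The first key step is to identify the characters with $\chi_\lambda(c^{-1})\neq 0$. The irreducibles of $G(d,1,n)$ are indexed by $d$-tuples of partitions $(\mu^{(0)},\ldots,\mu^{(d-1)})$ of total size $n$, and $c^{-1}$ is a single $n$-cycle of weight $-1$. By the wreath-product Murnaghan--Nakayama rule, evaluating on a single $n$-cycle removes one border strip of size $n$ from a single component $\mu^{(s)}$; this is possible exactly when all other components are empty and $\mu^{(s)}$ is a hook $\hook{m}{n}$. For such $\lambda = \lambda_{s,m}$ one has $\dim(\lambda_{s,m}) = \binom{n-1}{m}$, and because the base group $(\ZZ/d\ZZ)^n$ acts on $\lambda_{s,m}$ through the linear character $a\mapsto t^{\sum_j a_j}$ with $t = \exp(2\pi i s/d)$, the full character factors as $\chi_{\lambda_{s,m}}([\pi;a]) = t^{\sum_j a_j}\,\chi^{\hook{m}{n}}(\pi)$, so $\chi_{\lambda_{s,m}}(c^{-1}) = t^{-1}(-1)^m$. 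The parameter $s$ (equivalently the root of unity $t$) is the source of the sum $\sum_{t^d=1}$, and the accompanying twist $t^{-1}$ records the weight of $c$ — which is why the theorem depends on that weight.

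The second key step is to evaluate the inner sums. Writing $g=[\pi;a]$ and $X_i(t) := x_{i,0}+t x_{i,1}+\cdots+t^{d-1}x_{i,d-1}$, the base-group character factors as $t^{\sum_j a_j}=\prod_\gamma t^{w(\gamma)}$, so for fixed $\pi$ summing over all weight vectors $a$ replaces each cycle $\gamma$ by $d^{|\gamma|-1}\sum_w t^w x_{i,w} = d^{|\gamma|-1}X_i(t)$ (cycle weights are independent and equidistributed). Collecting these and using the specialization $\sum_{\pi}\chi^\mu(\pi)\,x^{c(\pi)} = n!\,s_\mu(1^x)$ from \eqref{eq: g is spe of schurs}, I would obtain
\[
\sum_{g} f_i(g)\,\chi_{\lambda_{s,m}}(g) \;=\; d^{\,n}\sum_{\pi\in\Symm_n}\chi^{\hook{m}{n}}(\pi)\left(\tfrac{X_i(t)}{d}\right)^{c(\pi)} \;=\; |G|\cdot s_{\hook{m}{n}}\!\left(1^{\,X_i(t)/d}\right),
\]
so that each Fourier component reduces precisely to the $\Symm_n$ computation for the $n$-cycle, with the single variable replaced by $X_i(t)/d$.

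Assembling the ingredients, the sum over the hook parameter $m$ — carrying the signs $(-1)^m$ and the dimensions $\binom{n-1}{m}$ — is exactly the manipulation that proves Jackson's $n$-cycle formula (Theorem~\ref{S_n theorem k factors}): expanding the hook specialization via Lemma~\ref{lemm:Sncasehooks} as $s_{\hook{m}{n}}(1^y) = \binom{n-1}{m}\sum_p\binom{n-1-m}{p-1-m}\binom{y}{p}$ and summing over $m$ collapses, by the definition \eqref{definition M coeff} of $M^{n-1}_{\pp-{\bf 1}}$, to $|G|^{k-1}\sum_{t^d=1}\sum_{\pp} M^{n-1}_{p_1-1,\ldots,p_k-1}\prod_i\binom{X_i(t)/d}{p_i}$, weighted in the $t$-sum by the root-of-unity twist $t^{-1}$ from $\chi_\lambda(c^{-1})$; this is the claimed right-hand side (the twist being the announced dependence on the weight of $c$). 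I expect the main obstacle to be the character-theoretic bookkeeping of the middle paragraphs — pinning down the non-vanishing characters on $c^{-1}$, their dimensions and values, and correctly tracking the root-of-unity weight through the base-group sum. Once the count is decomposed into $d$ independent copies of the $n$-cycle computation indexed by $t$, the remaining algebra is identical to what has already been carried out for $\Symm_n$.
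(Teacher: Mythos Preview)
Your approach is exactly the character-theoretic method the paper says it used (building on \cite[\S5.3]{ChapuyStump}); the paper does not include a detailed proof, and your sketch fills it in correctly. The identification of the non-vanishing characters as the one-component hooks $\lambda_{s,m}$, the factorization $\chi_{\lambda_{s,m}}([\pi;a]) = t^{\sum a_j}\chi^{\hook{m}{n}}(\pi)$, the base-group sum yielding $d^{|\gamma|-1}X_i(t)$ per cycle, and the reduction of the $m$-sum to $M^{n-1}_{\pp-\mathbf 1}$ via \eqref{eq:hooks} and \eqref{definition M coeff} are all right.

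One point worth flagging: your derivation produces the factor $t^{-1}$ inside the sum over roots of unity (coming from $\chi_\lambda(c^{-1}) = t^{-1}(-1)^m$), whereas the displayed formula in the paper has no such factor. Your version is the correct one: already for $n=1$, $k=2$ one has LHS $=\sum_a x_{1,a}x_{2,1-a}$, while the paper's displayed RHS gives $\sum_a x_{1,a}x_{2,-a}$; inserting $t^{-1}$ fixes this. This twist is precisely the ``dependence on the weight of $c$'' the paper alludes to, and you are right to identify it as such.
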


By setting $x_{i, 0} = x_i$ and $x_{i, j} = 1$ for $j = 1, \ldots, d - 1$, one immediately recovers on the LHS the polynomial of Theorem~\ref{G(d, 1, n) many factors theorem}; to make the RHS match requires a computation using Proposition~\ref{prop:recurrence Ms}.

\begin{remark}
The RHS in Theorem~\ref{prop:all weights} is, up to the power of $d$, the result of substituting $x_i \mapsto (x_{i, 0} + t x_{i, 1} + \dots + t^{d - 1} x_{i, d - 1})/d$ into the RHS of \eqref{eqn S_n k factors}, then extracting powers of $t$ modulo $t^d - 1$.  This raises the question of whether the same substitution has combinatorial meaning when factoring other elements.  Unfortunately, the answer seems to be negative: the factorization polynomial for the identity into two factors in $\Symm_2$ is $x^2y^2 + xy$, and substituting $x = (x_0 + t x_1)/2$ and $y = (y_0 + ty_1)/2$ and summing over $t = \pm1$ yields 
\[
\frac{x_0^2y_0^2 + x_1^2 y_0^2 + x_0^2y_1^2 + x_1^2y_1^2 + 4x_0x_1y_0y_1 + 4x_0y_0 + 4x_1y_1}{8}, 
\]
with fractional coefficients.  
For comparison, the polynomials counting two-factor factorizations of diagonal matrices in the signed permutation group $G(2, 1, 2)$ are
\begin{center}
\begin{tabular}{cp{1in}l}
$x_0^2y_0^2 + 2x_0x_1y_0y_1 + x_1^2y_1^2 + 2x_0y_0 + 2x_1y_1$ && (the identity), \\
$x_0^2y_0y_1 + x_0x_1y_0^2 + x_0x_1y_1^2 + x_1^2y_0y_1 + 2x_0y_1 + 2x_1y_0$ && (the two reflections), and \\
$x_0^2y_1^2 + 2x_0x_1y_0y_1 + x_1^2y_0^2 + 2x_0y_0 + 2x_1y_1$ && (the negative of the identity).
 \end{tabular}
 \end{center}
\end{remark}

\begin{question}
Is there a better expression for the polynomial that appears in Theorem~\ref{prop:all weights}, for example, in terms of a natural basis of polynomials?  Does it have a combinatorial proof?
\end{question}

\subsection{Refinement by fixed space orbit}

As discussed in Section~\ref{cycle type}, the cycle type result there makes sense for any complex reflection group $G$, where one refines by the $G$-orbit of fixed space of the factors.  

\begin{question}
Does Theorem~\ref{G(d, 1, n) many factors theorem cycle type} generalize?  What about its genus-$0$ special case?
\end{question}
A natural test-case would be transitive
factorizations of a Coxeter element in $G(d,d,n)$, refining
Theorem~\ref{thm:mainG(d,d,n) k factors}.  The first obvious obstruction to be overcome is the choice of a ``good'' basis of polynomials in which to express the generating function. 

\subsection{Intervals in two posets}
\label{sec:intervals in two posets}

It would be desirable to have a uniform proof of Corollary~\ref{cor:alt NC}, that the $G$-noncrossing partition lattice may be equivalently defined as an interval when $G$ is ordered by the reflection-length order $\leq_{R}$ or the codim-fix order $\leq$, even though the larger posets are not the same.

In light of Corollary~\ref{cor:alt NC}, it is natural to ask under what conditions the interval $[1, w]_{\leq}$ in the codim-fix order is equal to the interval $[1, w]_{\leq_R}$ in the reflection-length order; for example, does equality always hold when $\codim \fix(w) = \lR(w)$?  The next example shows that the answer to this question is negative.
\begin{example}
In $G(7, 7, 6)$, consider the elements $w = [123456; (1, 2, 3, 4, 5, 6)]$, $u = [123456; (1, 2, 0, 4, 0, 0)]$ and $v = [123456; (0, 0, 3, 0, 5, 6)]$.  It is easy to check that $w = u \cdot v$, $\lR(w) = \codim\fix(w) = 6$, $\codim\fix(u) = \codim \fix(v) = 3$, and $\lR(u) = \lR(v) = 4$.  Thus $u, v$ belong to the interval $[1, w]_{\leq}$ but not to the interval $[1, w]_{\leq_R}$.
\end{example}
\begin{question}
Can one characterize the elements $w$ for which $[1, w]_{\leq} = [1, w]_{\leq_R}$?  In particular, does equality hold for all regular elements $w$?
\end{question}

\subsection{Generating function by reflection length?}
One possibility, suggested by Remark~\ref{nc remark}, is that instead of writing a generating function for factorizations using fixed space dimension, one should write a generating function that records the reflection length $\lR(g)$ of the factors.  However, there is again an obstruction in terms of choosing a basis: typically in these groups there are elements of reflection length $> n$, so basis polynomials of degree $> n$ would be required, and it is not clear what would be a good choice of such polynomials.

\bibliography{CRG}
\bibliographystyle{alpha}

\appendix

\section{Tables of character polynomials}
\label{app:tables of character polynomials}

In this section, we collect tables of the character polynomials $f_{\chi}(x) := \sum_{w \in W} \normchi(w) x^{\dim \fix(w)}$ for the exceptional complex reflection groups.  These were computed on CoCalc using Sage \cite{sage} and GAP \cite{GAP}.  We show only the characters $\chi$ such that $\chi(c^{-1}) \neq 0$ where $c$ is a fixed Coxeter element.  One feature common to all groups is the appearance of the degrees with the trivial character
\[
f_{\mathrm{triv}}(x) = \sum_{w \in W} x^{\dim \fix w} = \prod (x - 1 + d_i)
\]
and the coexponents with the determinant character
\[
f_{\mathrm{det}}(x) = \sum_{w \in W} \det(w) x^{\dim \fix w} = \prod (x - e^*_i)
\]
(in the real cases, the sign character) -- cf.\ Section~\ref{sec:degrees and coexponents}.

\subsection{Rank two}

There are twelve well generated exceptional groups of rank $2$.  The tables below give the coexponents $(e^*_1, e^*_2)$, the degrees $(d_1, d_2)$, and, for each character $\chi$ such that $\chi(c^{-1}) \neq 0$, the dimension $\deg(\chi) = \chi(1)$, the character value $\chi(c^{-1})$, and the character polynomial $f_\chi$, expressed in the basis
\[
P_2 = \frac{(x - e^*_1)(x - e^*_2)}{d_1 \cdot d_2}, \qquad P_1 = \frac{x - 1}{d_1}, \qquad P_0 = 1.
\]
In the first two tables, two groups appear in each table; for the subsequent (larger) groups, each group appears in its own table, possibly in several columns.

\begin{table}[H]
\raisebox{.5in}{
$
\begin{array}{c|c|c}
\deg(\chi) & \chi(c^{-1}) & f_{\chi}(x) \\\hline\hline
1 & 1 &  24P_2 + 48P_1 + 24\\
\hline
1 & \z_3 & 24P_2 \\
1 & -1 - \z_3 & \\
2 & 1 & \\
\hline
2 & \z_3 & 24P_2 + 24P_1 \\
2 & -1 - \z_3 &
\end{array}
$}
\qquad\qquad
$
\begin{array}{c|c|c}
\deg(\chi) & \chi(c^{-1}) & f_{\chi}(x) \\\hline\hline
1 & 1 & 72P_2 + 144 P_1 + 72 \\\hline
1 &  \z_3   & 72P_2 + 72P_1 \\
1 & \z_3 & \\
1 & -1 - \z_3 & \\
1 & -1 - \z_3 & \\
\hline
1 & 1 & 72P_2 \\
1 & 1 & 
\\
1 & \z_3 & 
\\
1 & -1 - \z_3 & 
\\\hline
3 &  -1   & 72P_2 + 48P_1 \\
3 &  -\z_3   & 
\\
3 &   1 + \z_3  & 
\end{array}
$
\caption{$G_4$: $(e^*_1, e^*_2) = (1, 3)$, $(d_1, d_2) = (4, 6)$.  $G_5$: $(e^*_1, e^*_2) = (1, 7)$, $(d_1, d_2) = (6, 12)$.}
\end{table}

\newgeometry{margin=.5in}

\begin{table}[H]
$
\begin{array}{c|c|c}
\deg(\chi) & \chi(c^{-1}) & f_{\chi}(x) \\\hline\hline
1 & 1 & 48P_2 + 96P_1 + 48 \\
\hline
1 & - \z_3 & 48P_2 \\
1 & 1 + \z_3 & \\
\hline
1 & -1 & 48P_2 + 48 P_1 \\
1 & \z_3 & \\
1 & -1 - \z_3 & \\
2 & \z_{12} & \\
2 & \z_{12}^5 & \\
2 & \z_{12}^7 & \\
2 & \z_{12}^{11} & \\
\hline
2 & i & 48P_2 + 24P_1 \\
2 & -i &
\end{array}
$
%
\qquad\qquad
$
\begin{array}{c|c|c}
\deg(\chi) & \chi(c^{-1}) & f_{\chi}(x) \\\hline\hline
1 & 1 & 96P_2 + 192P_1 + 96 \\
\hline
1 & 1 & 96P_2 \\
1 & -1 & \\
1 & -1 & \\
2 & 1 & \\
2 & i & \\
2 & -i & \\
\hline
2 & -1 & 96P_2 + 96P_1 \\ 
2 & i & \\
2 & -i & \\
\hline
4 & i & 96P_2 + 48P_1 \\
4 & -i & 
\end{array}
$
\caption{$G_6$: $(e^*_1, e^*_2) = (1, 9)$, $(d_1, d_2) = (4, 12)$.  $G_8$: $(e^*_1, e^*_2) = (1, 5)$, $(d_1, d_2) = (8, 12)$.}
\label{table:G_6}
\end{table}

\begin{table}[H]
\raisebox{12pt}{$
\begin{array}{c|c|c}
\deg(\chi) & \chi(c^{-1}) & f_{\chi}(x) \\\hline\hline
1 & 1 & 192P_2 + 384P_1 + 192 \\
\hline
1 & -1 & 192P_2 + 192P_1 \\
1 & -1 & \\
1 & -i & \\
1 & i & \\
2 & 1 & \\
2 & -1 & \\
2 & \z_8 & \\
2 & - \z_8 & \\
2 & \z_8^3 & \\
2& - \z_8^3 & 
\end{array}
$}
\qquad
$
\begin{array}{c|c|c}
\deg(\chi) & \chi(c^{-1}) & f_{\chi}(x) \\\hline\hline
1 & 1 & 192P_2\\
1 & i & \\
1 & -i & \\
\hline
2 & i & 192P_2 + 96 P_1 \\
2 & -i & \\
2 & \z_8 & \\
2 & - \z_8 & \\
2& \z_8^3 & \\
2 & -\z_8^3 & \\
\hline
4 & \z_8 & 192P_2 + 144P_1 \\
4 & - \z_8 & \\
4 & \z_8^3 & \\
4 & - \z_8^3 &
\end{array}
$
\caption{$G_9$: $(e^*_1, e^*_2) = (1, 17)$, $(d_1, d_2) = (8, 24)$.}
\end{table}

\begin{table}[H]
$
\begin{array}{c|c|c}
\deg(\chi) & \chi(c^{-1}) & f_{\chi}(x) \\\hline\hline
1 & 1 & 288P_2 + 576P_1 + 288 \\
\hline
1 & -1 & 288P_2 + 288P_1 \\
1 & i & \\
1 & -i & \\
1 & \z_{3} & \\
1 &  -1 - \z_3 & \\
\hline
1 &  -\z_{3} & 288P_2 \\
1 & 1 + \z_3 & \\
1 &  \z_{12} & \\
1 & - \z_{12} & \\
1 &  \z_{12}^5 & \\
1 &  -\z_{12}^5 & 
\end{array}
$
\qquad
$
\begin{array}{c|c|c}
\deg(\chi) & \chi(c^{-1}) & f_{\chi}(x) \\\hline\hline
3 & 1 & 288P_2 + 96P_1 \\
3 &  \z_{3} & \\
3 &  -1 - \z_3 & \\
\hline
3 & -1 & 288P_2 + 192P_1 \\
3 & i & \\
3 & - i & \\
3 & -\z_{3}& \\
3 & 1 + \z_3& \\
3 &\z_{12} & \\
3 & -\z_{12}& \\
3 & \z_{12}^5 & \\
3 & - \z_{12}^5 & 
\end{array}
$
\caption{$G_{10}$: $(e^*_1, e^*_2) = (1, 13)$, $(d_1, d_2) = (12, 24)$.}
\end{table}

\begin{table}[H]
\raisebox{11.5pt}{$
\begin{array}{c|c|c}
\deg(\chi) & \chi(c^{-1}) & f_{\chi}(x) \\\hline\hline
1 & 1 & 144P_2 + 288P_1 + 144 \\
\hline
1 & -\z_3 & 144P_2 \\
1 & 1 + \z_{3} & 
\\\hline
2 & \z_{8} + \z_{8}^3 & 144P_2 + 72P_1 \\
2 & -\z_{8} - \z_{8}^3 & 
\\\hline
3 & 1 & 144P_2 + 96P_1 \\
3 & \z_3 & 
\\
3 & -1 - \z_{3} & 
\end{array}
$}
\qquad
$
\begin{array}{c|c|c}
\deg(\chi) & \chi(c^{-1}) & f_{\chi}(x) \\\hline\hline
1 & -1 & 144P_2 + 144P_1 \\
1 & \z_{3} & 
\\
1 & -1 - \z_3 & 
\\
2 & \z_{24} - \z_{24}^7 & 
\\
2 & -\z_{24} + \z_{24}^7 & 
\\
2 & \z_{24}^{5} - \z_{24}^{11} & 
\\
2 & - \z_{24}^{5} + \z_{24}^{11} & 
\\
3 & -1 & 
\\
3 & -\z_{3} & 
\\
3 & 1 + \z_3 & 
\end{array}
$
\caption{$G_{14}$: $(e^*_1, e^*_2) = (1, 19)$, $(d_1, d_2) = (6, 24)$.}
\end{table}

\begin{table}[H]
\raisebox{11.5pt}{$
\begin{array}{c|c|c}
\deg(\chi) & \chi(c^{-1}) & f_{\chi}(x) \\\hline\hline
1 & 1 & 600P_2 + 1200P_1 + 600 \\
\hline
4 & \z_{5} & 600P_2 + 300P_1 \\
4 & -\z_{5} & 
\\
4 & \z_{5}^2 & 
\\
4 & -\z_{5}^2 & 
\\
4 & \z_{5}^3 & 
\\
4 & -\z_{5}^3 & 
\\
4 & \z_{5}^4 & 
\\
4 & -\z_{5}^4 & 
\\\hline
5 & -1 & 600P_2 + 240P_1 \\

5 & -\z_{5} & 
\\
5 & -\z_{5}^2 & 
\\
5 & -\z_{5}^3 & 
\\
5 & -\z_{5}^4 & 
\end{array}
$}
\qquad
$
\begin{array}{c|c|c}
\deg(\chi) & \chi(c^{-1}) & f_{\chi}(x) \\\hline\hline
1 & \z_{5} & 600P_2 \\

1 & \z_{5}^2 & 
\\
1 & \z_{5}^3 & 
\\
1 & \z_{5}^4 & 
\\
2 & 1 & 
\\
2 & 1 & 
\\
2 & \z_{5} & 
\\
2 & \z_{5}^2 & 
\\
2 & \z_{5}^3 & 
\\
2 & \z_{5}^4 & 
\\
4 & 1 & 
\\
4 & -1 & 
\\\hline
2 & \z_{5} & 600P_2 + 600P_1 \\

2 & \z_{5}^2 & 
\\
2 & \z_{5}^3 & 
\\
2 & \z_{5}^4 & 
\end{array}
$
\caption{$G_{16}$: $(e^*_1, e^*_2) = (1, 11)$, $(d_1, d_2) = (20, 30)$.}
\end{table}

\begin{table}[H]
\raisebox{6pt}{
$
\begin{array}{c|c|c}
\deg(\chi) & \chi(c^{-1}) & f_{\chi}(x) \\\hline\hline
1 & 1 & 1200P_2 + \hfill \\
&&  + 2400P_1 + \\
&& \hfill + 1200 \\
\hline
1 & -\z_{5} & 1200P_2 \\

1 & -\z_{5}^2 & 
\\
1 & -\z_{5}^3 & 
\\
1 & -\z_{5}^4 & 
\\\hline
1 & -1 & 1200P_2 + \hfill 
\\
1 & \z_{5} & 
\hfill + 1200P_1
\\
1 & \z_{5}^2 & 
\\
1 & \z_{5}^3 & 
\\
1 & \z_{5}^4 & 
\\
2 & \z_{20} & 
\\
2 & -\z_{20} & 
\\
2 & \z_{20}^3 & 
\\
2 & -\z_{20}^3 & 
\\
2 & \z_{20}^7 & 
\\
2 & -\z_{20}^7 & 
\\
2 & \z_{20}^9 & 
\\
2 & -\z_{20}^9 & 
\end{array}
$}
\qquad
$
\begin{array}{c|c|c}
\deg(\chi) & \chi(c^{-1}) & f_{\chi}(x) \\\hline\hline
2 & i & 1200P_2 + \hfill 
\\
2 & i & 
\hfill + 600P_1
\\
2 & -i & 
\\
2 & -i & 
\\
2 & \z_{20} & 
\\
2 & -\z_{20} & 
\\
2 & \z_{20}^3 & 
\\
2 & -\z_{20}^3 & 
\\
2 & \z_{20}^7 & 
\\
2 & -\z_{20}^7 & 
\\
2 & \z_{20}^9 & 
\\
2 & -\z_{20}^9 & 
\\
4 & 1 & 
\\
4 & -1 & 
\\
4 & i & 
\\
4 & -i & 
\\\hline
5 & -1 & 1200P_2 + \hfill 
\\
5 & -\z_{5} & 
\hfill + 960P_1
\\
5 & -\z_{5}^2 & 
\\
5 & -\z_{5}^3 & 
\\
5 & -\z_{5}^4 & 
\end{array}
$
\qquad
$
\begin{array}{c|c|c}
\deg(\chi) & \chi(c^{-1}) & f_{\chi}(x) \\\hline\hline

4 & \z_{5} & 1200P_2 + \hfill 
\\
4 & -\z_{5} & 
\hfill + 900P_1
\\
4 & \z_{5}^2 & 
\\
4 & -\z_{5}^2 & 
\\
4 & \z_{5}^3 & 
\\
4 & -\z_{5}^3 & 
\\
4 & \z_{5}^4 & 
\\
4 & -\z_{5}^4 & 
\\
4 & \z_{20} & 
\\
4 & -\z_{20} & 
\\
4 & \z_{20}^3 & 
\\
4 & -\z_{20}^3 & 
\\
4 & \z_{20}^7 & 
\\
4 & -\z_{20}^7 & 
\\
4 & \z_{20}^9 & 
\\
4 & -\z_{20}^9 & 
\\\hline
5 & 1 & 1200P_2 + \hfill 
\\
5 & \z_{5} & 
\hfill + 720P_1
\\
5 & \z_{5}^2 & 
\\
5 & \z_{5}^3 & 
\\
5 & \z_{5}^4 & 
\end{array}
$
\caption{$G_{17}$: $(e^*_1, e^*_2) = (1, 41)$, $(d_1, d_2) = (20, 60)$.}
\end{table}

\begin{table}[H]
$
\begin{array}{c|c|c}
\deg(\chi) & \chi(c^{-1}) & f_{\chi}(x) \\\hline\hline
1 & 1 & 1800P_2 + \hfill\\
&& +3600P_1 + \\
&& \hfill +1800 \\
\hline
3 & -1 & 1800P_2 + 1200P_1 \\
3 & -1 & 
\\3 & -\z_{3} & 
\\3 & -\z_{3} & 
\\3 & 1 + \z_3 & 
\\3 & 1 + \z_3 & 
\\3 & -\z_{5} & 
\\3 & -\z_{5}^2 & 
\\3 & -\z_{5}^3 & 
\\3 & -\z_{5}^4 & 
\\3 & -\z_{15} & 
\\3 & -\z_{15}^2 & 
\\3 & -\z_{15}^4 & 
\\3 & -\z_{15}^7 & 
\\3 & -\z_{15}^{8} & 
\\3 & -\z_{15}^{11} & 
\\3 & -\z_{15}^{13} & 
\\3 & -\z_{15}^{14} & 
\end{array}
$
\qquad
$
\begin{array}{c|c|c}
\deg(\chi) & \chi(c^{-1}) & f_{\chi}(x) \\\hline\hline
1 & \z_{3} & 1800P_2 + \hfill 
\\1 & -1 - \z_3 & 
\hfill + 1800P_1
\\1 & \z_{5} & 
\\1 & \z_{5}^2 & 
\\1 & \z_{5}^3 & 
\\1 & \z_{5}^4 & 
\\\hline
5 & 1 & 1800P_2 + \hfill 
\\5 & \z_{5} & 
\hfill + 720P_1
\\5 & \z_{5}^2 & 
\\5 & \z_{5}^3 & 
\\5 & \z_{5}^4 & 
\\\hline
5 & \z_{3} & 1800P_2 + \hfill 
\\5 & -1 - \z_3 & 
\hfill + 1080P_1
\\5 & \z_{15} & 
\\5 & \z_{15}^2 & 
\\5 & \z_{15}^4 & 
\\5 & \z_{15}^7 & 
\\5 & \z_{15}^{8} & 
\\5 & \z_{15}^{11} & 
\\5 & \z_{15}^{13} & 
\\5 & \z_{15}^{14} & 
\end{array}
$
\qquad
\raisebox{6pt}{$
\begin{array}{c|c|c}
\deg(\chi) & \chi(c^{-1}) & f_{\chi}(x) \\\hline\hline
1 & \z_{15} & 1800P_2 \\
1 & \z_{15}^2 & 
\\1 & \z_{15}^4 & 
\\1 & \z_{15}^7 & 
\\1 & \z_{15}^{8} & 
\\1 & \z_{15}^{11} & 
\\1 & \z_{15}^{13} & 
\\1 & \z_{15}^{14} & 
\\\hline
3 & -\z_{5} & 1800P_2 + \hfill 
\\3 & -\z_{5}^2 & 
\hfill + 600P_1
\\3 & -\z_{5}^3 & 
\\3 & -\z_{5}^4 & 
\\3 & -\z_{15} & 
\\3 & -\z_{15}^2 & 
\\3 & -\z_{15}^4 & 
\\3 & -\z_{15}^7 & 
\\3 & -\z_{15}^{8} & 
\\3 & -\z_{15}^{11} & 
\\3 & -\z_{15}^{13} & 
\\3 & -\z_{15}^{14} & 
\end{array}
$}
\caption{$G_{18}$: $(e^*_1, e^*_2) = (1, 31)$, $(d_1, d_2) = (30, 60)$.}
\end{table}

\begin{table}[H]
\raisebox{12pt}{$
\begin{array}{c|c|c}
\deg(\chi) & \chi(c^{-1}) & f_{\chi}(x) \\\hline\hline
1 & 1 & 360P_2 + 7200P_1 + 360 \\
\hline
1 & \z_{3} & 360P_2 \\
1 & -1 - \z_3 & 
\\2 & -\z_{5} -\z_{5}^{4} & 
\\2 & -\z_{5}^2 - \z_{5}^3 & 
\\\hline
2 & -\z_{15} - \z_{15}^{4} & 360P_2 + 360P_1
\\2 & -\z_{15}^{2} - \z_{15}^{8} & 
\\2 & -\z_{15}^{7} - \z_{15}^{13} & 
\\2 & -\z_{15}^{11} - \z_{15}^{14} & 
\\4 & 1 & 
\\4 & -1 & 
\end{array}
$}
\qquad
$
\begin{array}{c|c|c}
\deg(\chi) & \chi(c^{-1}) & f_{\chi}(x) \\\hline\hline
4 & \z_3 & 360P_2 + 180P_1 \\
4 & -\z_{3} & 
\\4 & 1 + \z_3 & 
\\4 & -1 - \z_{3} & 
\\\hline
3 & -\z_{5} -\z_{5}^{4} & 360P_2 + 240P_1 \\
3 & -\z_{5}^2 - \z_{5}^3 & 
\\3 & -\z_{15} - \z_{15}^{4} & 
\\3 & -\z_{15}^{2} - \z_{15}^{8} & 
\\3 & -\z_{15}^{7} - \z_{15}^{13} & 
\\3 & -\z_{15}^{11} - \z_{15}^{14} & 
\\6 & -1 & 
\\6 & -\z_{3} & 
\\6 & 1 + \z_3 & 
\end{array}
$
\caption{$G_{20}$: $(e^*_1, e^*_2) = (1, 19)$, $(d_1, d_2) = (12, 30)$.}
\end{table}

\begin{table}[H]
$
\begin{array}{c|c|c}
\deg(\chi) & \chi(c^{-1}) & f_{\chi}(x) \\\hline\hline
1 & 1 & 720P_2 + 1440P_1 + 720 \\
\hline
1 & -\z_{3} & 720P_2 \\
1 & 1 + \z_3 & 
\\\hline
1 & -1 & 720P_2 + 720P_1 \\
1 & \z_{3} & 
\\1 & -1 - \z_3 & 
\\2 & 
\z_{60} - \z_{60}^{19} & 
\\2 & 
-\z_{60} + \z_{60}^{19} & 
\\2 & \z_{60}^{7} - \z_{60}^{13} & 
\\2 & -\z_{60}^{7} + \z_{60}^{13} & 
\\2 & 
\z_{60}^{11} - \z_{60}^{29}  & 
\\2 & 
- \z_{60}^{11} + \z_{60}^{29} & 
\\2 & 
\z_{60}^{17} - \z_{60}^{23}  & 
\\2 & 
- \z_{60}^{17} + \z_{60}^{23} & 
\\3 & \z_{5} + \z_{5}^{4} & 
\\3 & \z_{5}^2 + \z_{5}^3 & 
\\3 & \z_{15} + \z_{15}^4 & 
\\3 & 
\z_{15}^{2} + \z_{15}^{8} & 
\\3 & \z_{15}^7 + \z_{15}^{13} & 
\\3 & 
\z_{15}^{11} + \z_{15}^{14} & 
\\4 & 1 & 
\\4 & -1 & 
\\4 & i & 
\\4 & -i & 
\end{array}
\qquad
\begin{array}{c|c|c}
\deg(\chi) & \chi(c^{-1}) & f_{\chi}(x) \\\hline\hline
2 & 
	\z_{20} + \z_{20}^9 & 720P_2 + 360P_1
\\2 & \z_{20}^3 + \z_{20}^7 & 
\\2 & 
	-\z_{20} - \z_{20}^9 & 
\\2 & -\z_{20}^7 - \z_{20}^3 & 
\\\hline
3 & -\z_{5} - \z_{5}^{4} & 720P_2 + 480P_1 \\
3 & -\z_{5}^2 - \z_{5}^3 & 
\\3 & -\z_{15} - \z_{15}^{4} & 
\\3 & -\z_{15}^{2} - \z_{15}^{8} & 
\\3 & -\z_{15}^{7} - \z_{15}^{13} & 
\\3 & -\z_{15}^{11} - \z_{15}^{14} & 
\\\hline
4 & \z_{3} & 720P_2 + 540P_1 \\
4 & -\z_{3} & 
\\4 & 1 + \z_3 & 
\\4 & -1 - \z_3 & 
\\4 & \z_{12} & 
\\4 & -\z_{12} & 
\\4 & \z_{12}^5 & 
\\4 & -\z_{12}^5 & 
\\\hline
6 & i & 720P_2 + 600P_1 \\
6 & -i & 
\\6 & \z_{12} & 
\\6 & -\z_{12} & 
\\6 & \z_{12}^5 & 
\\6 & -\z_{12}^5 & 
\end{array}
$
\caption{$G_{21}$: $(e^*_1, e^*_2) = (1, 49)$, $(d_1, d_2) = (12, 60)$.}
\end{table}

\subsection{Rank three}

For the five exceptional groups of rank $3$, all of which are well generated, we give for each character $\chi$ such that $\chi(c^{-1}) \neq 0$ the character polynomial $f_\chi$ expressed in the given basis (see Section~\ref{sec:rank 3}).

\begin{table}[H]
\raisebox{12pt}{$
\begin{array}{c|c|c}
\deg(\chi) & \chi(c^{-1}) & f_{\chi}(x) \\\hline\hline
1 & 1 & 120P_3 + 360P_2 + 360P_1 + 120 \\
\hline
1 & -1 & 120P_3 \\
\hline
3 & -\z_5 - \z_5^4 & 120P_3 + 120P_2 \\
3 & -\z_5^2 - \z_5^3 & \\
\hline
3 &  \z_5 + \z_5^4 & 120P_3 + 240P_2 + 120P_1 \\
3 &  \z_5^2 + \z_5^3 & \\
\hline
4 & 1 & 120P_3 + 120P_2 + 60P_1 \\
4 & - 1 & 
\end{array}
$}
\qquad\qquad
%
$
\begin{array}{c|c|c}
\deg(\chi) & \chi(c^{-1}) & f_{\chi}(x) \\\hline\hline
1 & 1 & 336P_3 + 1008P_2 + 1008P_1 + 336 \\
\hline
1 & -1 & 336P_3 \\
\hline
3 & \z_{7} + \z_{7}^2 + \z_{7}^4 & 336P_3 + 336P_2 \\
3 & \z_{7}^3 + \z_{7}^5 + \z_{7}^6 & 
\\
6 & 1 & 
\\\hline
3 & -\z_{7} - \z_{7}^2 - \z_{7}^4 & 336P_3 + 672P_2 + 336P_1 \\
3 & -\z_{7}^3 - \z_{7}^5 - \z_{7}^6 & 
\\6 & -1 & 
\\\hline
8 & 1 & 336P_3 + 504P_2 + 126P_1 \\
8 & -1 & 
\end{array}
$
\caption{$G_{23}$ (Coxeter $H_3$): $P_3 = \frac{(x - 1)(x - 5)(x - 9)}{2 \cdot 6 \cdot 10}$, $P_2 = \frac{(x - 1)(x - 5)}{2 \cdot 6}$, $P_1 = \frac{x - 1}{2}$.  $G_{24}$: $P_3 = \frac{(x - 1)(x - 9)(x - 11)}{4 \cdot 6 \cdot 14}$, $P_2 = \frac{(x - 1)(x - 7)}{4 \cdot 6}$, $P_1 = \frac{x - 1}{3}$.}
\end{table}

\begin{table}[H]
$
\begin{array}{c|c|c}
\deg(\chi) & \chi(c^{-1}) & f_{\chi}(x) \\\hline\hline
1 & 1 & 648P_3 + 1944P_2 + 1944P_1 + 648 \\
\hline
1 & 1 & 648P_3 \\
1 & 1 & 
\\
3 & \z_{3} & 
\\
3 & -1 - \z_3 & 
\\\hline
3 & \z_{3} & 648P_3 + 648P_2 \\
3 & -1 - \z_3 & 
\\\hline
3 & -1 & 648P_3 + 648P_2 + 216P_1 \\
\hline
3 & \z_{3} & 648P_3 + 1296P_2 + 648P_1 \\
3 & -1 - \z_3 & 
\\\hline
9 & -\z_{3} & 648P_3 + 648P_2 + 72P_1 \\
9 & 1 + \z_3 & 
\end{array}
$
\caption{$G_{25}$: $P_3 = \frac{(x - 1)(x - 4)(x - 7)}{6 \cdot 9 \cdot 12}$, $P_2 = \frac{(x - 1)(x - 4)}{6 \cdot 9}$, $P_1 = \frac{x - 1}{6}$.}
\end{table}

\begin{table}[H]
$
\begin{array}{c|c|c}
\deg(\chi) & \chi(c^{-1}) & f_{\chi}(x) \\\hline\hline
1 & 1 & 1296P_3 + 3888P_2 + 3888P_1 + 1296 \\
\hline
1 & -1 & 1296P_3 + 2592P_2 + 1296P_1 
\\2 & \z_{3} & 
\\2 & -1 - \z_3 & 
\\\hline
1 & \z_{3} & 1296P_3 + 1296P_2 \\
1 & -1 - \z_3 & 
\\
2 & 1 & 
\\
2 & -\z_{3} & 
\\
2 & 1 + \z_3 & 
\end{array}
\qquad
\begin{array}{c|c|c}
\deg(\chi) & \chi(c^{-1}) & f_{\chi}(x) \\\hline\hline
1 & -\z_{3} & 1296P_3 \\
1 & 1 + \z_3 & 
\\
2 & -1 & 
\\\hline
8 & 1 & 1296P_3 + 1944P_2 + 648P_1 \\
8 & -1 & 
\\\hline
8 & \z_{3} & 1296P_3 + 1296P_2 + 162P_1 \\
8 & -\z_{3} & 
\\
8 & 1 + \z_3 & 
\\
8 & -1 - \z_3 & 
\end{array}
$
\caption{$G_{26}$: $P_3 = \frac{(x - 1)(x - 7)(x - 13)}{6 \cdot 12 \cdot 18}$, $P_2 = \frac{(x - 1)(x - 7)}{6 \cdot 12}$, $P_1 = \frac{x - 1}{6}$.}
\end{table}

\begin{table}[H]
$
\begin{array}{c|c|c}
\deg(\chi) & \chi(c^{-1}) & f_{\chi}(x) \\\hline\hline
1 & 1 & 2160P_3 + 6480P_2 + 6480P_1 + 2160 \\
\hline
1 & -1 & 2160P_3 \\
\hline
8 & \z_{5} + \z_{5}^{4} & 2160P_3 + 3240P_2 + 810P_1 
\\8 & - \z_{5} - \z_{5}^{4} & 
\\8 & \z_{5}^2 + \z_{5}^3 & 
\\8 & -\z_{5}^2 - \z_{5}^3 & 
\\\hline
9 & -1 & 2160P_3 + 3600P_2 + 1260P_1 \\
9 & -\z_{3} & 
\\
9 & 1 + \z_3 & 
\\\hline
9 & 1 & 2160P_3 + 2880P_2 + 540P_1 
\\
9 & \z_{3} & 
\\
9 & -1 - \z_3 & 
\end{array}
\qquad
\begin{array}{c|c|c}
\deg(\chi) & \chi(c^{-1}) & f_{\chi}(x) \\\hline\hline
3 & \z_{15} + \z_{15}^4 & 2160P_3 + 4320P_2 + 2160P_1 
\\
3 & \z_{15}^{2} + \z_{15}^{8} & 
\\
3 & \z_{15}^7 + \z_{15}^{13} & 
\\
3 & \z_{15}^{11} + \z_{15}^{14} & 
\\
6 & \z_{3} & 
\\
6 & -1 - \z_3 & 
\\\hline
3 & -\z_{15} - \z_{15}^{4} & 2160P_3 + 2160P_2 
\\
3 & -\z_{15}^{2} - \z_{15}^{8} & 
\\
3 & -\z_{15}^{7} - \z_{15}^{13} & 
\\
3 & -\z_{15}^{11} - \z_{15}^{14} & 
\\
6 & -\z_{3} & 
\\
6 & 1 + \z_3 & 
\end{array}
$
\caption{$G_{27}$: $P_3 = \frac{(x - 1)(x - 19)(x - 25)}{6 \cdot 12 \cdot 30}$, $P_2 = \frac{(x - 1)(x - 15)}{6 \cdot 12}$, $P_1 = \frac{2(x - 1)}{9}$.}
\end{table}

\subsection{Higher ranks}
We give the relevant character polynomials.  Except in $G_{32}$, we do not reexpress them in a different basis.  

\begin{table}[H]
$
\begin{array}{c|c|c}
\deg(\chi) & \chi(c^{-1}) & f_{\chi}(x) \\\hline\hline
1 & 1 & x^4 + 24x^3 + 190x^2 + 552x + 385 \\
\hline
1 & 1 & x^4 - 24x^3 + 190x^2 - 552x + 385 \\
\hline
1 & 1 & x^4 - 26x^2 + 25 \\

1 & 1 & 
\\\hline
2 & -1 & x^4 - 12x^3 + 34x^2 + 12x - 35 \\

2 & -1 & 
\\\hline
2 & -1 & x^4 + 12x^3 + 34x^2 - 12x - 35 \\

2 & -1 & 
\\\hline
4 & 1 & x^4 - 14x^2 + 13 \\
\hline
6 & -1 & x^4 - 18x^2 + 17 \\
\hline
6 & -1 & x^4 + 6x^2 - 7 \\
\hline
12 & 1 & x^4 - 6x^2 + 5
\end{array}
$
\caption{$G_{28} = F_4$.}
\end{table}

\begin{table}[H]
$
\begin{array}{c|c|c}
\deg(\chi) & \chi(c^{-1}) & f_{\chi}(x) \\\hline\hline
1 & 1 & x^4 + 40x^3 + 530x^2 + 2720x + 4389 \\
\hline
1 & 1 & x^4 - 40x^3 + 530x^2 - 2480x + 1989 \\
\hline
4 & -1 & x^4 + 20x^3 + 110x^2 + 100x - 231 \\

4 & i & 
\\
4 & -i & 
\\\hline
4 & -1 & x^4 - 20x^3 + 110x^2 - 100x + 9 \\

4 & i & 
\\
4 & -i & 
\\\hline
6 & -1 & x^4 + 10x^2 + 40x - 51 \\

6 & -1 & 
\end{array}
\qquad
\begin{array}{c|c|c}
\deg(\chi) & \chi(c^{-1}) & f_{\chi}(x) \\\hline\hline
6 & 1 & x^4 - 30x^2 - 40x + 69 \\

6 & -1 & 
\\
6 & -1 & 
\\\hline
16 & i & x^4 - 10x^3 + 20x^2 + 10x - 21 
\\
16 & -i & 
\\\hline
16 & i & x^4 + 10x^3 + 20x^2 - 10x - 21 
\\
16 & -i & 
\\\hline
24 & 1 & x^4 - 10x^2 + 9 
\\
24 & i & 
\\
24 & -i & 
\end{array}
$
\caption{$G_{29}$.}
\end{table}

\begin{table}[H]
$
\begin{array}{c|c|c}
\deg(\chi) & \chi(c^{-1}) & f_{\chi}(x) \\\hline\hline
1 & 1 & x^4 + 60x^3 + 1138x^2 + 7140x + 6061 
\\\hline
1 & 1 & x^4 - 60x^3 + 1138x^2 - 7140x + 6061 
\\\hline
4 & \z_5 + \z_5^4 & x^4 - 30x^3 + 208x^2 + 30x - 209 
\\
4 & \z_5^2 + \z_5^3 & 
\\\hline
4 & \z_5 + \z_5^4 & x^4 + 30x^3 + 208x^2 - 30x - 209 
\\
4 & \z_5^2 + \z_5^3 & 
\\\hline
6 & -\z_5 - \z_5^4 & x^4 - 102x^2 + 101 
\\
6 & -\z_5^2 - \z_5^3 & 
\\\hline
16 & 1 & x^4 - 15x^3 + 43x^2 + 15x - 44 
\\
16 & -1 & 
\\\hline
16 & 1 & x^4 + 15x^3 + 43x^2 - 15x - 44 
\\
16 & -1 & 
\end{array}
\qquad
\begin{array}{c|c|c}
\deg(\chi) & \chi(c^{-1}) & f_{\chi}(x) \\\hline\hline
8 & -1 & x^4 + 28x^2 - 29 
\\\hline
10 & -1 & x^4 + 10x^2 - 11 
\\\hline
16 & \z_5 + \z_5^4 & x^4 - 32x^2 + 31 
\\
16 & \z_5^2 + \z_5^3 & 
\\\hline
24 & 1 & x^4 - 12x^2 + 11 
\\
24 & 1 & 
\\
24 & -\z_5 - \z_5^4 & 
\\
24 & -\z_5^2 - \z_5^3 & 
\\
48 & -1 & 
\\\hline
30 & \z_5 + \z_5^4 & x^4 - 30x^2 + 29 
\\
30 & \z_5^2 + \z_5^3 & 
\\\hline
40 & 1 & x^4 - 20x^2 + 19
\end{array}
$
\caption{$G_{30} = H_4$.}
\end{table}

\begin{table}[H]
\raisebox{12pt}{$
\begin{array}{c|c|c}
\deg(\chi) & \chi(c^{-1}) & f_{\chi}(x) \\\hline\hline
1 & 1 & 155520P_4 + 622080P_3 + \hfill \\
&& \hfill + 933120P_2 +  622080P_1 + 155520 
\\\hline
1 & \z_{3} & 155520P_4 
\\1 & -1 - \z_3 & 
\\4 & \z_{3} & 
\\4 & -1 - \z_3 & 
\\6 & 1 & 
\\\hline
4 & 1 & 155520P_4 + 155520P_3 
\\4 & 1 & 
\\\hline
4 & \z_{3} & 155520P_4 + 466560P_3 + \hfill 
\\4 & -1 - \z_3 & 
\hfill + 466560P_2 + 155520P_1 
\\\hline
6 & \z_{3} & 155520P_4 + 311040P_3 + 155520P_2 
\\6 & -1 - \z_3 & 
\\\hline
24 & -1 & 155520P_4 + 311040P_3 + \hfill \\
&&\hfill + 194400P_2 + 38880P_1 
\end{array}
$}
\qquad
$
\begin{array}{c|c|c}
\deg(\chi) & \chi(c^{-1}) & f_{\chi}(x) \\\hline\hline
24 & -\z_{3} & 155520P_4 + 155520P_3 + 38880P_2 
\\24 & 1 + \z_3 & 
\\\hline
36 & -1 & 155520P_4 + 155520P_3 + 25920P_2 
\\36 & -1 & 
\\36 & -\z_{3} & 
\\36 & 1 + \z_3 & 
\\\hline
36 & -\z_{3} & 155520P_4 + 311040P_3 +  \hfill 
\\36 & 1 + \z_3 & 
 \hfill + 181440P_2 + 25920P_1 
\\\hline
64 & 1 & 155520P_4 + 155520P_3 + 29160P_2 
\\64 & -1 & 
\\\hline 
64 & \z_{3} & 155520P_4 + 233280P_3 + \hfill 
\\64 & -\z_{3} & 
 \hfill + 87480P_2 + 4860P_1 
\\64 & 1 + \z_3 & 
\\64 & -1 - \z_3 & 
\\\hline
81 & 1 & 155520P_4 + 207360P_3 +  \hfill 
\\81 & \z_{3} & 
 \hfill + 69120P_2 + 3840P_1 
\\81 & -1 - \z_3 & 
\end{array}
$
\caption{$G_{32}$: $(e^*_i) = (1, 7, 13, 19)$, $(d_i) = (12, 18, 24, 30)$, $P_i = \prod_{j = 1}^i \frac{x - e^*_j}{d_j}$.}
\label{table:G_32}
\end{table}

\begin{table}[H]
$
\begin{array}{c|c|c}
\deg(\chi) & \chi(c^{-1}) & f_{\chi}(x) \\\hline\hline
1 & 1 & x^5 + 45x^4 + 750x^3 + \hfill \\
&& \hfill + 5750x^2 + 20049x + 25245 \\
\hline
1 & -1 & x^5 - 45 x^4 + 750 x^3 - \hfill \\
&& \hfill - 5590 x^2 + 17169 x - 12285 \\
\hline
5 & \z_3 & x^5 + 27x^4 + 246x^3 + \hfill 
\\
5 & -1 - \z_{3} & 
\hfill + 818x^2 + 393x - 1485 
\\\hline
5 & -\z_3 &x^5 - 27 x^4 + 246 x^3 - \hfill 
\\
5 & 1 + \z_{3} & 
\hfill -  802 x^2 + 393 x + 189\\
\hline
10 & \z_{3} & x^5 + 9x^4 - 6x^3 - \hfill 
\\
10 & -1 - \z_3 & 
 \hfill - 190x^2 - 219x + 405 
\end{array}
$
\qquad
$
\begin{array}{c|c|c}
\deg(\chi) & \chi(c^{-1}) & f_{\chi}(x) \\\hline\hline
10 & -\z_{3} &  x^5 - 9x^4 - 6x^3 + 134x^2 + 69x - 189 \\
10 & 1 + \z_3 & 
\\\hline
20 & -1 &  x^5 + 9x^4 + 30x^3 + 62x^2 + 33x - 135 \\
\hline
20 & 1 &  x^5 - 9x^4 + 30x^3 - 46x^2 - 111x + 135 \\
\hline
40 & -\z_3 & x^5 + 9x^4 + 12x^3 - 64x^2 - 93x + 135 \\
40 & 1 + \z_{3} & 
\\\hline
40 & \z_3 & x^5 - 9x^4 + 12x^3 + 44x^2 - 21x - 27 \\
40 & -1 - \z_{3} & 
\\\hline
64 & 1 & x^5 - 15x^3 - 10x^2 + 24x \\
64 & -1 & 
\end{array}
$
\caption{$G_{33}$.}
\end{table}

\begin{table}[H]
$
\begin{array}{c|c|c}
\deg(\chi) & \chi(c^{-1}) & f_{\chi}(x) \\\hline\hline
1 & 1 & x^6 + 126x^5 + 6195x^4 + 151060x^3 + \hfill \\
&& \hfill + 1904679x^2 + 11559534x + 25569445
\\\hline
1 & 1 & x^6 - 126x^5 + 6195x^4 - 148820x^3 + \hfill\\
&& \hfill + 1763559x^2 - 8703534x + 7082725 
\\\hline
6 & \z_3 & x^6 + 84x^5 + 2625x^4 + 37240x^3 + \hfill 
\\
6 & -1 - \z_3 & \hfill + 226779x^2 + 356916x - 623645 
\\\hline
6 & \z_3 & x^6 - 84x^5 + 2625x^4 - 36680x^3 + \hfill 
\\
6 & -1 - \z_3 & 
\hfill + 208299x^2 - 254436x + 80275 
\\\hline
15 & \z_3 & x^6 - 42x^5 + 483x^4 + 196x^3 - \hfill 
\\
15 & -1 - \z_3 & 
\hfill - 19929x^2 - 9114x + 28405 
\\\hline
15 & \z_3 & x^6 + 42x^5 + 483x^4 - 644x^3 - \hfill 
\\
15 & -1 - \z_3 & 
\hfill - 33369x^2 - 88998x + 122485 
\\\hline
20 & 1 & x^6 - 231x^4 - 392x^3 + \hfill 
\\
20 & 1 & 
\hfill + 12915x^2 + 38472x - 50765 
\\\hline
90 & -1 & x^6 - 42x^5 + 651x^4 - 4508x^3 + \hfill \\
&& \hfill + 11319x^2 + 14406x - 21827 
\\\hline
90 & -1 & x^6 + 42x^5 + 651x^4 + 4732x^3 + \hfill \\
&& \hfill +  16023x^2 + 13146x - 34595 
\\\hline
120 & -1 & x^6 + 21x^4 + 112x^3 + \hfill 
\\
120 & -1 & 
\hfill + 819x^2 + 5712x - 6665 
\\\hline
120 & -\z_3 & x^6 - 42x^5 + 609x^4 - 3332x^3 + \hfill 
\\
120 & 1 + \z_3 & 
\hfill + 3507x^2 + 8526x - 9269 
\\\hline
120 & -\z_3 & x^6 + 42x^5 + 609x^4 + 3388x^3 + \hfill 
\\
120 & 1 + \z_3 & 
\hfill + 3675x^2 - 12390x + 4675 
\end{array}
$
\qquad
\raisebox{13pt}{$
\begin{array}{c|c|c}
\deg(\chi) & \chi(c^{-1}) & f_{\chi}(x) \\\hline\hline
384 & \z_3 & x^6 - 21x^5 + 105x^4 + 175x^3 - \hfill 
\\
384 & -\z_3 & 
\hfill - 1176x^2 - 924x + 1840 
\\
384 & 1 + \z_3 & 
\\
384 & -1 - \z_3 & 
\\\hline
384 & \z_3 & x^6 + 21x^5 + 105x^4 - 245x^3 - \hfill 
\\
384 & -\z_3 & 
\hfill - 2226x^2 - 1176x + 3520 
\\
384 & 1 + \z_3 & 
\\
384 & -1 - \z_3 & 
\\\hline
540 & -1 & x^6 - 63x^4 - 56x^3 + \hfill 
\\
540 & -1 & 
\hfill + 819x^2 + 504x - 1205 
\\
540 & -\z_3 & 
\\
540 & 1 + \z_3 & 
\\\hline
720 & -\z_3 & x^6 - 21x^4 + 28x^3 - \hfill 
\\
720 & 1 + \z_3 & 
\hfill - 189x^2 - 924x + 1105 
\\\hline
729 & 1 & x^6 + 14x^5 + 35x^4 - 140x^3 - \hfill 
\\
729 & \z_3 & 
\hfill - 441x^2 + 126x + 405 
\\
729 & -1 - \z_3 & 
\\\hline
729 & 1 & x^6 - 14x^5 + 35x^4 + 140x^3 - \hfill 
\\
729 & \z_3 & 
\hfill - 441x^2 - 126x + 405 
\\
729 & -1 - \z_3 & 
\\\hline
1280 & 1 & x^6 - 42x^4 - 14x^3 + \hfill 
\\
1280 & -1 & 
\hfill + 441x^2 + 294x - 680
\end{array}
$}
\caption{$G_{34}$.}
\end{table}

\begin{table}[H]
$
\begin{array}{c|c|c}
\deg(\chi) & \chi(c^{-1}) & f_{\chi}(x) \\\hline\hline
1 & 1 & x^6 + 36x^5 + 510x^4 + 3600x^3 + 13089x^2 + 22284x + 12320 \\\hline
1 & 1 & x^6 - 36x^5 + 510x^4 - 3600x^3 + 13089x^2 - 22284x + 12320 \\\hline
6 & -1 & x^6 - 24x^5 + 210x^4 - 780x^3 + 909x^2 + 804x - 1120 \\\hline
6 & -1 & x^6 + 24x^5 + 210x^4 + 780x^3 + 909x^2 - 804x - 1120 \\\hline
10 & -1 & x^6 + 6x^4 + 57x^2 - 64 \\\hline
15 & -1 & x^6 - 12x^5 + 54x^4 - 120x^3 + 105x^2 + 132x - 160 \\\hline
15 & -1 & x^6 + 12x^5 + 54x^4 + 120x^3 + 105x^2 - 132x - 160 \\\hline
20 & 1 & x^6 - 30x^4 + 237x^2 - 208 \\\hline
30 & 1 & x^6 - 12x^5 + 42x^4 - 12x^3 - 123x^2 + 24x + 80 \\\hline
30 & 1 & x^6 + 12x^5 + 42x^4 + 12x^3 - 123x^2 - 24x + 80 \\\hline
60 & 1 & x^6 - 6x^4 - 27x^2 + 32 \\\hline
90 & -1 & x^6 - 18x^4 + 81x^2 - 64
\end{array}
$
\caption{$G_{35} = E_6$.}
\end{table}

\begin{table}[H]
$
\begin{array}{c|c|c}
\deg(\chi) & \chi(c^{-1}) & f_{\chi}(x) \\\hline\hline
1 & 1 & x^7 + 63x^6 + 1617x^5 + 21735x^4 + 162939x^3 + 663957x^2 + 1286963x + 765765 \\\hline
1 & -1 & x^7 - 63 x^6 + 1617 x^5 - 21735 x^4 + 162939 x^3 - 663957 x^2 + 1286963 x - 765765 \\\hline
7 & 1 & x^7 - 45 x^6 + 789 x^5 - 6705 x^4 + 27219 x^3 - 38295 x^2 - 28009 x + 45045 
\\\hline
7 & -1 & x^7 + 45x^6 + 789x^5 + 6705x^4 + 27219x^3 + 38295x^2 - 28009x - 45045 \\\hline
35 & 1 & x^7 - 27x^6 + 285x^5 - 1503x^4 + 4035x^3 - 3825x^2 - 4321x + 5355 \\\hline
35 & -1 & x^7 + 27x^6 + 285x^5 + 1503x^4 + 4035x^3 + 3825x^2 - 4321x - 5355 \\\hline
35 & -1 & x^7 - 9x^6 - 39x^5 + 423x^4 + 363x^3 - 5139x^2 - 325x + 4725 \\\hline
35 & 1 & x^7 + 9x^6 - 39x^5 - 423x^4 + 363x^3 + 5139x^2 - 325x - 4725 \\\hline
56 & -1 & x^7 - 27x^6 + 267x^5 - 1125x^4 + 1479x^3 + 1467x^2 - 1747x - 315 \\\hline
56 & 1 & x^7 + 27x^6 + 267x^5 + 1125x^4 + 1479x^3 - 1467x^2 - 1747x + 315 \\\hline
70 & 1 & x^7 - 9x^6 + 33x^5 - 81x^4 + 219x^3 - 675x^2 - 253x + 765 \\\hline
70 & -1 & x^7 + 9x^6 + 33x^5 + 81x^4 + 219x^3 + 675x^2 - 253x - 765 \\\hline
280 & 1 & x^7 + 9x^6 + 15x^5 - 45x^4 - 177x^3 - 369x^2 + 161x + 405 \\\hline
280 & -1 & x^7 - 9x^6 + 15x^5 + 45x^4 - 177x^3 + 369x^2 + 161x - 405 \\\hline
280 & 1 & x^7 - 9x^6 - 3x^5 + 171x^4 - 141x^3 - 747x^2 + 143x + 585 \\\hline
280 & -1 & x^7 + 9x^6 - 3x^5 - 171x^4 - 141x^3 + 747x^2 + 143x - 585 \\
\hline
512 & 1 & x^7 - 21x^5 + 84x^3 - 64x \\
512 & -1 & 
\end{array}
$
\caption{$G_{36} = E_7$.}
\end{table}

\begin{table}[H]
$
\begin{array}{c|c|c}
\deg(\chi) & \chi(c^{-1}) & f_{\chi}(x) \\\hline\hline
1 & 1 & x^8 + 120x^7 + 6020x^6 + 163800x^5 + 2616558x^4 + 24693480x^3 + 130085780x^2 + 323507400x + 215656441
\\\hline
 1 & 1 & x^8 - 120x^7 + 6020x^6 - 163800x^5 + 2616558x^4 - 24693480x^3 + 130085780x^2 - 323507400x + 215656441
\\\hline
 8 & -1 & x^8 + 90x^7 + 3290x^6 + 62370x^5 + 644028x^4 + 3400110x^3 + 6789110x^2 - 3462570x - 7436429
\\\hline
 8 & -1 & x^8 - 90x^7 + 3290x^6 - 62370x^5 + 644028x^4 - 3400110x^3 + 6789110x^2 + 3462570x - 7436429
\\\hline
 56 & 1 & x^8 + 30x^7 + 170x^6 - 2250x^5 - 20532x^4 + 38970x^3 + 489830x^2 - 36750x - 469469
\\\hline
 56 & 1 & x^8 - 30x^7 + 170x^6 + 2250x^5 - 20532x^4 - 38970x^3 + 489830x^2 + 36750x - 469469
\\\hline
 70 & -1 & x^8 - 220x^6 + 15630x^4 - 362380x^2 + 346969
\\\hline
 84 & -1 & x^8 - 60x^7 + 1460x^6 - 18540x^5 + 130398x^4 - 481140x^3 + 667940x^2 + 499740x - 799799
\\\hline
 84 & -1 & x^8 + 60x^7 + 1460x^6 + 18540x^5 + 130398x^4 + 481140x^3 + 667940x^2 - 499740x - 799799
\\\hline
 112 & 1 & x^8 + 60x^7 + 1430x^6 + 17100x^5 + 105288x^4 + 293940x^3 + 182570x^2 - 311100x - 289289
\\\hline
 112 & 1 & x^8 - 60x^7 + 1430x^6 - 17100x^5 + 105288x^4 - 293940x^3 + 182570x^2 + 311100x - 289289
\\\hline
 420 & -1 & x^8 + 20x^6 + 510x^4 + 12740x^2 - 13271
\\\hline
 448 & -1 & x^8 - 130x^6 + 5208x^4 - 61870x^2 + 56791
\\\hline
 448 & -1 & x^8 + 30x^7 + 350x^6 + 2070x^5 + 7008x^4 + 17370x^3 + 32450x^2 - 19470x - 39809
\\\hline
 448 & -1 & x^8 - 30x^7 + 350x^6 - 2070x^5 + 7008x^4 - 17370x^3 + 32450x^2 + 19470x - 39809
\\\hline
 1008 & -1 & x^8 + 30x^7 + 290x^6 + 630x^5 - 4572x^4 - 18630x^3 + 2510x^2 + 17970x + 1771
\\\hline
 1008 & -1 & x^8 - 30x^7 + 290x^6 - 630x^5 - 4572x^4 + 18630x^3 + 2510x^2 - 17970x + 1771
\\\hline
 1134 & 1 & x^8 - 60x^6 + 878x^4 - 4140x^2 + 3321
\\\hline
 1344 & 1 & x^8 + 30x^7 + 320x^6 + 1350x^5 + 618x^4 - 11430x^3 - 28120x^2 + 10050x + 27181
\\\hline
 1344 & 1 & x^8 - 30x^7 + 320x^6 - 1350x^5 + 618x^4 + 11430x^3 - 28120x^2 - 10050x + 27181
\\\hline
 1680 & 1 & x^8 - 100x^6 + 3030x^4 - 26500x^2 + 23569
\\\hline
 2016 & 1 & x^8 - 10x^6 - 72x^4 - 5590x^2 + 5671
\\\hline
 4096 & 1 & x^8 + 15x^7 + 35x^6 - 315x^5 - 1092x^4 + 1260x^3 + 4640x^2 - 960x - 3584
\\ 4096 & -1 & 
\\\hline 
4096 & 1 & x^8 - 15x^7 + 35x^6 + 315x^5 - 1092x^4 - 1260x^3 + 4640x^2 + 960x - 3584
\\ 4096 & -1 & 
\\\hline
 4480 & 1 & x^8 - 40x^6 + 510x^4 - 2200x^2 + 1729
\\\hline
 4536 & 1 & x^8 - 60x^6 + 1118x^4 - 6540x^2 + 5481
\\\hline
 5670 & -1 & x^8 - 60x^6 + 1070x^4 - 6060x^2 + 5049
\\\hline
 7168 & -1 & x^8 - 40x^6 + 348x^4 + 1040x^2 - 1349
\end{array}
$
\caption{$G_{37} = E_8$.}
\end{table}

\end{document}